\theoremstyle{thmstyleone}%
\newtheorem{theorem}{Theorem}
\newtheorem{corollary}{Corollary}
\theoremstyle{thmstyletwo}%
\newtheorem{example}{Example}%
\newtheorem{remark}{Remark}%
\theoremstyle{thmstylethree}%
\newtheorem{lem}{Lemma}%
\begin{document}

\title[A generalization of the relaxation-based matrix splitting iterative method for solving the system of generalized absolute value equations]{A generalization of the relaxation-based matrix splitting iterative method for solving the system of generalized absolute value equations}


\author[1]{\fnm{Xuehua} \sur{Li}}\email{3222714384@qq.com}

\author*[1]{\fnm{Cairong} \sur{Chen}}\email{cairongchen@fjnu.edu.cn}

\author[2]{\fnm{Deren} \sur{Han}}\email{handr@buaa.edu.cn}

\affil[1]{\orgdiv{School of Mathematics and Statistics \& Key Laboratory of Analytical Mathematics and Applications (Ministry of Education) \& Fujian Provincial Key Laboratory of Statistics and Artificial Intelligence}, \orgname{Fujian Normal University}, \orgaddress{\city{Fuzhou}, \postcode{350117}, \country{China}}}

\affil[2]{\orgdiv{LMIB of the Ministry of Education, School of Mathematical Sciences}, \orgname{Beihang University}, \orgaddress{\city{Beijing}, \postcode{100191}, \country{China}}}



\abstract{By incorporating a new matrix splitting and the momentum acceleration into the relaxed-based matrix splitting (RMS) method \cite{soso2023}, a generalization of the RMS (GRMS) iterative method for solving the generalized absolute value equations (GAVEs) is proposed. On the one hand, unlike some existing methods, by using the Cauchy's convergence principle we give some sufficient conditions for the existence and uniqueness of the solution to GAVEs and prove that our method can converge to the unique solution of GAVEs. On the other hand, we obtain a few new and weaker convergence conditions for some existing methods. Moreover, we establish comparison theorems between GRMS method and some existing methods. Preliminary numerical experiments  show that the proposed method is efficient.}

\keywords{Generalized absolute value equations, Matrix splitting, Relaxation technique, Momentum acceleration, Convergence}


\maketitle

\section{Introduction}\label{sec:intro}
We focus on seeking a solution of the following system of generalized absolute value equations (GAVEs)
\begin{equation}\label{eq:gave}
Ax - B|x| - b = 0,
\end{equation}
where $A,B\in \mathbb{R}^{n\times n}$ and $b\in \mathbb{R}^n$ are given, and $x\in\mathbb{R}^n$ is unknown. Here, $|x|\in\mathbb{R}^n$ denotes the vector whose $i$th component is $|x_i|$, i.e., the absolute value of $x_i$. When~$B=I$, GAVEs~\eqref{eq:gave} reduces to the system of absolute value equations (AVEs)
\begin{equation}\label{eq:ave}
	Ax - |x| - b = 0.
\end{equation}
To the best of our knowledge, GAVEs~\eqref{eq:gave} is formally introduced by Rohn in \cite{rohn2004}. Over the past two decades, GAVEs~\eqref{eq:gave} and AVEs \eqref{eq:ave} have attracted more and more attention in the optimization community and a great deal of research have been done; see, e.g., \cite{mame2006,huhu2010,jizh2013,LCX2016,mang2007,manga2009a,
manga2009b,cyyh2021,lyyhc2023,alct2023,prok2009,chyh2023}. Particularly, solving the general GAVEs~\eqref{eq:gave} is NP-hard \cite{mang2007}.

Within the numerical algorithms for solving GAVEs~\eqref{eq:gave}, some are characterized by using the matrix splitting technique or the relaxation technique. Recently, by using the matrix splitting technique and the relaxation technique, Song and Song \cite{soso2023} present the relaxation-based matrix splitting (RMS) iterative method (see Algorithm \ref{alg:rms} below) for solving GAVEs~\eqref{eq:gave}.

\begin{algorithm}
\caption{}\label{alg:rms}
Assume that $A=M_{\rm RMS}-N_{\rm RMS}$ with $M_{\rm RMS}$ being nonsingular. Given any initial vectors $x^{(0)}, y^{(0)}\in \mathbb{R}^{n}$, for $k=0,1,2,\ldots$ until the iterative sequence $\{(x^{(k)},y^{(k)})\}^\infty_{k=0}$ is convergent, compute
\begin{equation}\label{eq:rms}
\begin{cases}
		x^{(k+1)}=M_{\rm RMS}^{-1}(N_{\rm RMS}x^{(k)} + By^{(k)} +b),\\
        y^{(k+1)}=(1-\tau_{\rm RMS})y^{(k)} + \tau_{\rm RMS}|x^{(k+1)}|,
\end{cases}
\end{equation}
where $\tau_{\rm RMS}$ is a positive constant.
\end{algorithm}

The RMS iterative scheme \eqref{eq:rms} includes many existing iterative schemes as its special cases \cite{soso2023}. For instance,
\begin{enumerate}
  \item[(i)] Let $A = M_{\rm NMS} - N_{\rm NMS}$ and $\Omega_{\rm NMS}$ be a given matrix such that $M_{\rm NMS} + \Omega_{\rm NMS}$ is invertible. If $\tau_{\rm RMS}=1$ and $M_{\rm RMS}=M_{\rm NMS}+\Omega_{\rm NMS}$, the RMS iteration \eqref{eq:rms} with $y^{(0)}= |x^{(0)}|$ turns into the  Newton-based matrix splitting (NMS) iteration \cite{zhwl2021}
      \begin{equation}\label{eq:nms}
      x^{(k+1)} = (M_{\rm NMS} + \Omega_{\rm NMS})^{-1}[(N_{\rm NMS} + \Omega_{\rm NMS})x^{(k)} + B|x^{(k)}| + b].
      \end{equation}
      The NMS iteration \eqref{eq:nms} reduces to the modified Newton-type (MN) iteration~\cite{wacc2019} when $M_{\rm NMS}=A$ and $\Omega_{\rm NMS}$ is a positive semi-definite matrix. Moreover, if $M_{\rm NMS}=A$ and $\Omega_{\rm NMS}=0$, the NMS iteration \eqref{eq:nms} reduces to the Picard
      iteration~\cite{rohf2014}
      \begin{equation}\label{eq:pi}
      x^{(k+1)} = A^{-1}( B|x^{(k)}| + b).
      \end{equation}
      If $M_{\rm NMS} =\dfrac{1}{2}(A+\Omega_{\rm SSMN})$ and $\Omega_{\rm NMS}=0$, then the NMS iteration \eqref{eq:nms} turns into the following shift-splitting modified Newton-type (SSMN) iteration \cite{liyi2021}
  \begin{equation}\label{eq:ssmm}
       x^{(k+1)} = (A+\Omega_{\rm SSMN})^{-1} \left[(\Omega_{\rm SSMN}-A) x^{(k)} + 2B|x^{(k)}| + 2b\right],
      \end{equation}
where $\Omega_{\rm SSMN}$ is a positive semi-definite matrix such that $ A+\Omega_{\rm SSMN} $ is invertible.

  \item[(ii)] If $B=I, M_{\rm RMS}=\dfrac{1}{\omega_{\rm SOR}}A$ and $\tau_{\rm RMS}=\omega_{\rm SOR}>0$, the RMS iteration \eqref{eq:rms} becomes the SOR-like iteration \cite{KM2017}
  \begin{equation}\label{it:sor}
		\begin{cases}
		    x^{(k+1)}=(1-\omega_{\rm SOR})x^{(k)} + \omega_{\rm SOR} A^{-1}\left( y^{(k)} + b\right),\\
			y^{(k+1)}=(1-\omega_{\rm SOR})y^{(k)} + \omega_{\rm SOR} |x^{(k+1)}|.
		\end{cases}
	\end{equation}

  \item[(iii)] Throughout this paper, let $A=D-L-U$ with $D$, $-L$ and $-U$ being the diagonal part, the strictly lower-triangular part and the strictly upper-triangular part of $A$, respectively. If $B=I$, $M_{\rm RMS}=\dfrac{1}{\omega}D-L$ and $\tau_{\rm RMS}=\omega>0$, the RMS iteration~\eqref{eq:rms} turns into the modified SOR-like iteration
      \cite[Equation (11)]{liwu2020}.

  \item[(iv)] If $B=I, M_{\rm RMS}=A, \tau_{\rm RMS}=\tau_{\rm FPI}>0$,  the RMS iteration \eqref{eq:rms} is reduced to the fixed point iteration (FPI) \cite{ke2020}
\begin{equation}\label{eq:fpi}
\begin{cases}
		x^{(k+1)}= A^{-1}(y^{(k)} + b),\\
        y^{(k+1)}= \left(1-\tau_{\rm FPI} \right)y^{(k)} + \tau_{\rm FPI}|x^{(k+1)}|,
\end{cases}
\end{equation}
which is a special case of the new SOR-like (NSOR) iteration \cite{doss2020}
\begin{equation}\label{eq:nsor}
\begin{cases}
x^{(k+1)}= x^{(k)} + \omega_{\rm NSOR} A^{-1}(b-Ax^{(k)} + y^{(k)} ),\\
y^{(k+1)}= y^{(k)} + \dfrac{\omega_{\rm NSOR} }{\sigma_{\rm NSOR}}( b + |x^{(k+1)}| - Ax^{(k+1)}),
\end{cases}
\end{equation}
where $\omega_{\rm NSOR},\sigma_{\rm NSOR}>0$. Indeed, if $\omega_{\rm NSOR}=1$ and $\sigma_{\rm NSOR} = \dfrac{1}{\tau_{\rm FPI}}$, the NSOR iteration \eqref{eq:nsor} reduces to \eqref{eq:fpi}. In addition, the RMS iteration \eqref{eq:rms} is equivalent to the NSOR iteration \eqref{eq:nsor} if $B = I$, $M_{\rm RMS}=A$, $\tau_{\rm RMS} = \frac{1}{\sigma_{\rm NSOR}}$ and $\omega_{\rm NSOR}=1$. Under these conditions, both of them can be seem as the FPI iteration.
\end{enumerate}

Before going ahead, we should point out that the FPI \eqref{eq:fpi} is also a special case of the modified FPI (MFPI) \cite{yuch2022}
\begin{equation}\label{eq:mfpi}
\begin{cases}
x^{(k+1)}= A^{-1}(Q_{\rm MFPI} y^{(k)} + b ),\\
y^{(k+1)}= (1-\tau_{\rm MFPI} )y^{(k)} + \tau_{\rm MFPI} Q^{-1}_{\rm MFPI}|x^{(k+1)}|,
\end{cases}
\end{equation}
where $Q_{\rm MFPI}\in \mathbb{R}^{n\times n}$ is a nonsingular matrix and $\tau_{\rm MFPI}>0$. Generally, NSOR~\eqref{eq:nsor} and MFPI~\eqref{eq:mfpi} are different from each other.

Though the RMS iteration \eqref{eq:rms} contains many existing iterations as its special cases, after further investigation, it cannot include the general NSOR iteration \eqref{eq:nsor} and the general MFPI~\eqref{eq:mfpi} (see the next section for more details). This motivates us to generalize the RMS iteration \eqref{eq:rms} such that, at least, the generalization includes the general NSOR iteration \eqref{eq:nsor} and the general MFPI~\eqref{eq:mfpi} as its special cases. To this end, the momentum acceleration technique \cite{zhang2013,rume1986,bk2004,qian1999} enters our vision. Concretely, by incorporating the momentum acceleration technique and a new matrix splitting into the RMS iteration \eqref{eq:rms}, a generalization of the RMS (GRMS) iteration is developed for solving GAVEs~\eqref{eq:gave}.

The rest of this paper is organized as follows. In Section \ref{sec:GRMS}, we establish the GRMS iterative method for solving GAVEs \eqref{eq:gave}. The convergence theories of the GRMS method are presented in Section \ref{sec:Convergence}.  In Section \ref{sec:compar}, comparison theorems between GRMS method and some existing methods are proposed. In Section \ref{sec:Numerical}, the feasibility and efficiency of the GRMS method are verified by numerical experiments. Finally, the conclusion of this paper is given in Section  \ref{sec:Conclusions}.

\textbf{Notation.}
$\mathbb{N}^+$ is the set of all positive integer. $\mathbb{R}^{n\times n}$ is the set of all $n \times n$ real matrices, $\mathbb{R}^{n}= \mathbb{R}^{n\times 1}$ and $\mathbb{R}^+$ represents the set of the positive real number. $|U|\in\mathbb{R}^{n\times n}$ denote the componentwise absolute value of matrix $U$. $I$ denotes the identity matrix with suitable dimension. For $U\in\mathbb{R}^{n\times n}$, $\rho(U)$ denotes the spectral radius of $U$, $\Vert U\Vert$ denotes the $2$-norm of $U$ which is defined by the formula $\Vert U\Vert=\max\{\Vert Ux\Vert:x\in\mathbb{R}^n,\Vert x\Vert=1\}$, where $\Vert x\Vert$ is the $2$-norm  of the vector $x$. For any matrices $U=(u_{ij})$ and $V=(v_{ij})\in\mathbb{R}^{n\times n}$, $U\leq V$ ($U< V$) means that $u_{ij}\leq v_{ij}$ ($u_{ij}< v_{ij}$) for $i,j=1,2,\ldots,n$.

\section{The GRMS iterative method }\label{sec:GRMS}
Let $|x|=Qy$ with $Q\in\mathbb{R}^{n\times n}$.  Then GAVEs \eqref{eq:gave} can be transformed into the following two-by-two block nonlinear equations
\begin{equation}\label{eq:two}
\begin{cases}
	Ax - BQy = b,\\
    Qy - |x| = 0.
\end{cases}
\end{equation}
By splitting $A$ and $Q$ as $ A= M-N$ and $Q = Q_1-Q_2$ with $M$ and $Q_1\in\mathbb{R}^{n\times n}$ being invertible. Then the system \eqref{eq:two} can be rewritten as
\begin{equation}\label{eq:etwo}
	\begin{cases}
    Mx = Nx + BQy + b,\\
    Q_1y= Q_1y -  \theta Q_1y + \theta Q_2y + \theta|x| +  Hx -  Hx ,
	\end{cases}
\end{equation}
where $\theta>0$ is a scaled parameter and $H\in\mathbb{R}^{n\times n}$ is a given matrix. Based on \eqref{eq:etwo}, the following Algorithm~\ref{alg:gsor} (which is called as the GRMS iteration method) can be developed for solving GAVEs \eqref{eq:gave}.

\begin{algorithm}
\caption{}\label{alg:gsor}
Assume that $A,B\in \mathbb{R}^{n\times n}$ and $Q\in \mathbb{R}^{n\times n}$ are given. Let $A=M-N$ and $Q = Q_1-Q_2$ with $M$ and $Q_1$ being invertible. Given initial vectors $x^{(0)}\in \mathbb{R}^{n}$ and $y^{(0)}\in \mathbb{R}^{n}$, for $k=0,1,2,\ldots$ until the iteration sequence~$\{(x^{(k)},y^{(k)})\}^\infty_{k=0} $ is convergent, compute
	\begin{equation}\label{eq:grms}
		\begin{cases}
            x^{(k+1)}=M^{-1}Nx^{(k)} + M^{-1}BQy^{(k)} + M^{-1}b,\\
			y^{(k+1)}=(1 - \theta )y^{(k)} + \theta {Q_1}^{-1}Q_2y^{(k)} +  \theta{Q_1}^{-1}|x^{(k+1)}| + {Q_1}^{-1}Hx^{(k+1)}-{Q_1}^{-1}Hx^{(k)},
		\end{cases}
	\end{equation}
in which $\theta>0$ is a scaled factor and $H\in \mathbb{R}^{n\times n}$ is a given matrix.
\end{algorithm}

Comparing with the RMS iteration \eqref{eq:rms}, the GRMS iteration \eqref{eq:grms} involves a new matrix $Q$ as well as its splitting and the momentum acceleration term ${Q_1}^{-1}H(x^{(k+1)}-x^{(k)})$ whenever $H\neq 0$, which make the GRMS iteration \eqref{eq:grms} more general than the RMS iteration~\eqref{eq:rms}. Indeed, if $M = M_{\rm RMS}, Q=I$, $Q_1 = \frac{\theta}{\tau_{\rm RMS}}I$, and $H=0$, the GRMS iteration \eqref{eq:grms} turns into the RMS iteration \eqref{eq:rms}. This also means that the
 GRMS iteration~\eqref{eq:grms} includes the existing iterative schemes contained in the RMS iteration~\eqref{eq:rms} as its special cases. Moreover, the GRMS iteration~\eqref{eq:grms} contains some iterative schemes which are not generally contained by the RMS iteration \eqref{eq:rms}, examples are as follows.

If $B = Q= I$, $M = \frac{1}{\alpha_{\rm MSOR}\omega_{\rm MSOR}}A$, $Q_1 = \frac{\theta(2-\omega_{\rm MSOR})}{2\omega_{\rm MSOR}}I$ and $H = \frac{\theta(1-\alpha_{\rm MSOR}\omega_{\rm MSOR})}{\alpha_{\rm MSOR}\omega_{\rm MSOR}}A$ ($\alpha_{\rm MSOR},\omega_{\rm MSOR} >0$ and $\omega_{\rm MSOR} \neq 2$), the GRMS iteration~\eqref{eq:grms} reduces to
\begin{equation}\label{eq:socmsor}
\begin{cases}
x^{(k+1)}=(1-\alpha_{\rm MSOR}\omega_{\rm MSOR})x^{(k)} + \alpha_{\rm MSOR}\omega_{\rm MSOR} A^{-1} (y^{(k)} + b),\\
y^{(k+1)}=y^{(k)} + \frac{2\omega_{\rm MSOR}}{2-\omega_{\rm MSOR}}(-A x^{(k+1)} + |x^{(k+1)}| + b),
\end{cases}
\end{equation}
which is the modified SOR-like (MSOR) iteration proposed in \cite{huli2022} in the context of AVEs~\eqref{eq:ave} associated with second order cones. Particularly, if $\alpha_{\rm MSOR} = \frac{2\sigma_{\rm NSOR} + \omega_{\rm NSOR}}{2}$ and $\omega_{\rm MSOR} = \frac{2\omega_{\rm NSOR}}{2\sigma_{\rm NSOR} + \omega_{\rm NSOR}}$, the iteration \eqref{eq:socmsor} reduces to the NSOR iteration \eqref{eq:nsor}. However, the RMS iteration~\eqref{eq:rms} can only contain the NSOR iteration \eqref{eq:nsor} with $\omega_{\rm NSOR} = 1$.

If $B=I$, $M = \frac{1}{\alpha_{\rm MGSOR}}A$, $Q=Q_{\rm MGSOR}$, $Q_1 = \frac{\theta}{\beta_{\rm MGSOR}} Q_{\rm MGSOR}$ and $H=0$ , then the GRMS iteration \eqref{eq:grms} turns into the modified generalized SOR-like (MGSOR) iteration~\cite{zhzl2023}
\begin{equation}\label{eq:mgsor}
\begin{cases}
x^{(k+1)}=(1-\alpha_{\rm MGSOR})x^{(k)} + \alpha_{\rm MGSOR} A^{-1} (Q_{\rm MGSOR} y^{(k)} + b),\\
y^{(k+1)}=(1-\beta_{\rm MGSOR})y^{(k)} + \beta_{\rm MGSOR}Q_{\rm MGSOR}^{-1}|x^{(k+1)}|,
\end{cases}
\end{equation}
where $\alpha_{\rm MGSOR}$ and $\beta_{\rm MGSOR}$ are positive constants and $Q_{\rm MGSOR}$ is a nonsingular matrix. Specially, if $\alpha_{\rm MGSOR} = \beta_{\rm MGSOR} =\omega_{\rm SOR}$ and $Q_{\rm MGSOR} = I$, the MGSOR iteration~\eqref{eq:mgsor} reduces to the SOR-like iteration \eqref{it:sor}. If $\alpha_{\rm MGSOR} = 1$, $\beta_{\rm MGSOR} = \tau_{\rm MFPI}$ and $Q_{\rm MGSOR} = Q_{\rm MFPI}$, the MGSOR iteration \eqref{eq:mgsor} reduces to the MFPI iteration~\eqref{eq:mfpi} \cite{zhzl2023}. However, the RMS iteration~\eqref{eq:rms} cannot contain the MGSOR iteration \eqref{eq:mgsor} with $Q_{\rm MGSOR}\neq I$ and the MFPI iteration~\eqref{eq:mfpi} with $Q_{\rm MFPI}\neq I$.

If $M = M_{\rm MAMS}+\Omega_{\rm MAMS}$ with $A = M_{\rm MAMS} - N_{\rm MAMS}$ and $\Omega_{\rm MAMS}$ being a positive diagonal matrix, $B= Q =I$, $Q_1=\theta I$ and $H=\theta\beta_{\rm MAMS} I$ ($\beta_{\rm MAMS}$ is a real number called momentum factor), the GRMS iteration \eqref{eq:grms} becomes the momentum acceleration-based matrix splitting (MAMS) iteration \cite{zzll2023}
\begin{equation}\label{it:mams1}
x^{(k+2)}= (M_{\rm MAMS}+\Omega_{\rm MAMS})^{-1}\left[(N_{{\rm MAMS}}+\Omega_{\rm MAMS})x^{(k+1)} + |x^{(k+1)}| + \beta_{\rm MAMS} (x^{(k+1)}-x^{(k)}) + b \right]
\end{equation}
with $x^{(1)} = (M_{\rm MAMS}+\Omega_{\rm MAMS})^{-1} [(N_{{\rm MAMS}}+\Omega_{\rm MAMS})x^{(0)} + y^{(0)} + b]$, where $x^{(0)}$ is used for both iterations and $y^{(0)}$ is used in the GRMS iteration. In other words, with suitable initial vectors, the GRMS iteration \eqref{eq:grms} is the same with the MAMS iteration \eqref{it:mams1} for solving AVEs~\eqref{eq:ave}. However, it seems impossible to establish such a relationship between the RMS iteration \eqref{eq:rms} and the  MAMS iteration \eqref{it:mams1} with $\beta_{\rm MAMS}\neq 0$.

In conclusion, the GRMS iteration \eqref{eq:grms} is more general than the RMS
iteration~\eqref{eq:rms}. Before ending this section, we will summarize more specific versions of the GRMS iteration \eqref{eq:grms} for solving GAVEs~\eqref{eq:gave}.

\begin{enumerate}
\item [(i)] If $M=\dfrac{1}{2}\left(\alpha_{\rm FPI\text{-}SS}I+ A\right)$ with $\alpha_{\rm FPI\text{-}SS}>0$, $Q=I, Q_1=\frac{\theta}{\omega_{\rm FPI\text{-}SS}}I,~H=0$, then the GRMS iteration~\eqref{eq:grms} turns into the shift-splitting fixed point iteration (FPI-SS) \cite{lild2022}
    \begin{equation}\label{eq:fpiss}
\begin{cases}
x^{(k+1)}=(\alpha_{\rm FPI\text{-}SS}I+ A)^{-1}(\alpha_{\rm FPI\text{-}SS}I- A)x^{(k)} + 2 (\alpha_{\rm FPI\text{-}SS}I+ A)^{-1} (B y^{(k)} + b),\\
y^{(k+1)}=(1-\omega_{\rm FPI\text{-}SS})y^{(k)} + \omega_{\rm FPI\text{-}SS}|x^{(k+1)}|,
\end{cases}
\end{equation}
where $\alpha_{\rm FPI\text{-}SS}$ is a positive constant such that $\alpha_{\rm FPI\text{-}SS}I+ A$ is nonsingular and $\omega_{\rm FPI\text{-}SS}$ is a positive constant. Particularly, when $M=A$, $Q=I, Q_1=\frac{\theta}{\omega_{\rm FPI}}I$ and $H=0$, the GRMS iteration~\eqref{eq:grms} turns into the fixed point iteration (FPI) for solving GAVEs~\eqref{eq:gave} \cite{lild2022}
\begin{equation}\label{eq:fpi4gave}
\begin{cases}
x^{(k+1)}=A^{-1}(B y^{(k)} + b),\\
y^{(k+1)}=(1-\omega_{\rm FPI})y^{(k)} + \omega_{\rm FPI}|x^{(k+1)}|,
\end{cases}
\end{equation}
 where $\omega_{\rm FPI}$ is a positive constant. Indeed, \eqref{eq:fpi4gave} is a generalization of \eqref{eq:fpi}.

\item [(ii)] If $M=\frac{1}{\alpha_{\rm MGSOR}}A$, $Q=Q_{\rm MGSOR}, Q_1=\frac{\theta}{\beta_{\rm MGSOR}}Q_{\rm MGSOR}$ and $H=0$,
     then the GRMS iteration~\eqref{eq:grms} becomes the MGSOR iteration
   \begin{equation}\label{eq:mgsor4gave}
		\begin{cases}
		x^{(k+1)}=(1-\alpha_{\rm MGSOR})x^{(k)}+\alpha_{\rm MGSOR} A^{-1}\left(BQ_{\rm MGSOR}y^{(k)} + b\right),\\
		y^{(k+1)}=(1-\beta_{\rm MGSOR})y^{(k)} + \beta_{\rm MGSOR} Q^{-1}_{\rm MGSOR} |x^{(k+1)}|
		\end{cases}
  \end{equation}
  for solving GAVEs~\eqref{eq:gave} with $\alpha_{\rm MGSOR}>0$ and $\beta_{\rm MGSOR}>0$, which is an extension of \eqref{eq:mgsor}. Specially, similar to the case for AVEs~\eqref{eq:ave}, if $\alpha_{\rm MGSOR} = \beta_{\rm MGSOR} =\omega_{\rm SOR}>0$ and $Q_{\rm MGSOR} = I$, the MGSOR iteration \eqref{eq:mgsor4gave} becomes the SOR-like iteration
    \begin{equation}\label{it:sor4gave}
		\begin{cases}
		    x^{(k+1)}=(1-\omega_{\rm SOR})x^{(k)} + \omega_{\rm SOR} A^{-1}\left( B y^{(k)} + b\right),\\
			y^{(k+1)}=(1-\omega_{\rm SOR})y^{(k)} + \omega_{\rm SOR} |x^{(k+1)}|
		\end{cases}
	\end{equation}
for solving GAVEs~\eqref{eq:gave}, which is a generalization of \eqref{it:sor}. If $\alpha_{\rm MGSOR} = 1$, $\beta_{\rm MGSOR} = \tau_{\rm MFPI}>0$ and $Q_{\rm MGSOR} = Q_{\rm MFPI}$, the MGSOR iteration \eqref{eq:mgsor4gave} turns into the following MFPI
\begin{equation}\label{eq:mfpi4gave}
\begin{cases}
x^{(k+1)}= A^{-1}(BQ_{\rm MFPI} y^{(k)} + b ),\\
y^{(k+1)}= (1-\tau_{\rm MFPI} )y^{(k)} + \tau_{\rm MFPI} Q^{-1}_{\rm MFPI}|x^{(k+1)}|
\end{cases}
\end{equation}
for solving GAVEs~\eqref{eq:gave}, which is a generalization of \eqref{eq:mfpi}.

\item [(iii)] Let $B$ be nonsingular. If $M = \frac{1}{\alpha_{\rm MSOR}\omega_{\rm MSOR}}A$, $Q = I$, $Q_1 = \frac{\theta (2-\omega_{\rm MSOR})}{2\omega_{\rm MSOR}}B^{-1}$ and $H = \frac{\theta(1- \alpha_{\rm MSOR}\omega_{\rm MSOR})}{\alpha_{\rm MSOR}\omega_{\rm MSOR}}B^{-1}A$ ($\alpha_{\rm MSOR},\omega_{\rm MSOR} >0$ and $\omega_{\rm MSOR} \neq 2$), the GRMS iteration~\eqref{eq:grms} becomes the MSOR iteration
    \begin{equation}\label{eq:msor4gave}
\begin{cases}
x^{(k+1)}=(1-\alpha_{\rm MSOR}\omega_{\rm MSOR})x^{(k)} + \alpha_{\rm MSOR}\omega_{\rm MSOR} A^{-1} (B y^{(k)} + b),\\
y^{(k+1)}=y^{(k)} + \frac{2\omega_{\rm MSOR}}{2-\omega_{\rm MSOR}}(-A x^{(k+1)} + B|x^{(k+1)}| + b)
\end{cases}
\end{equation}
for solving GAVEs~\eqref{eq:gave}, which is a generalization of \eqref{eq:socmsor}. Particularly, if $\alpha_{\rm MSOR} = \frac{2\sigma_{\rm NSOR} + \omega_{\rm NSOR}}{2}$ and $\omega_{\rm MSOR} = \frac{2\omega_{\rm NSOR}}{2\sigma_{\rm NSOR} + \omega_{\rm NSOR}}$,  the MSOR iteration \eqref{eq:msor4gave} reduces to the NSOR iteration
\begin{equation}\label{eq:nsor4gave}
\begin{cases}
x^{(k+1)}= x^{(k)} + \omega_{\rm NSOR} A^{-1}(b-Ax^{(k)} + By^{(k)} ),\\
y^{(k+1)}= y^{(k)} + \dfrac{\omega_{\rm NSOR} }{\sigma_{\rm NSOR}}( b + B|x^{(k+1)}| - Ax^{(k+1)})
\end{cases}
\end{equation}
for solving GAVEs~\eqref{eq:gave} with $\omega_{\rm NSOR}>0$ and $\sigma_{\rm NSOR}>0$, which is a generalization of \eqref{eq:nsor}.

\item [(iv)] Let $B$ be nonsingular. If $\Omega_{\rm MAMS}$ is a positive diagonal matrix such that $M=M_{\rm MAMS}+\Omega_{\rm MAMS}$ is nonsingular, where $A=M_{\rm MAMS}-N_{\rm MAMS}$. Furthermore, let $Q=I$, $Q_1=\theta I$, and $H=\theta\beta_{\rm MAMS}B^{-1}$ with $\beta_{\rm MAMS}$. Then the GRMS iteration~\eqref{eq:grms} turns into the following MAMS iteration \cite{zzll2023}
 \begin{equation}\label{eq:mams4gave}
  x^{(k+2)}= (M_{\rm MAMS}+\Omega_{\rm MAMS})^{-1}\left[\big(N_{\rm MAMS}+\Omega_{\rm MAMS}\big)x^{(k+1)} + B|x^{(k+1)}| + \beta_{\rm MAMS} \big(x^{(k+1)}-x^{(k)}\big) + b \right]
\end{equation}
with $x^{(1)} = (M_{\rm MAMS}+\Omega_{\rm MAMS})^{-1} [(N_{{\rm MAMS}}+\Omega_{\rm MAMS})x^{(0)} + By^{(0)} + b]$, where $x^{(0)}$ is used for both iterations and $y^{(0)}$ is used for the GRMS iteration.

\end{enumerate}

\section{Convergence analysis}\label{sec:Convergence}
In this section, we will analyse the convergence of the GRMS iteration \eqref{eq:grms}. Indeed, we have the following convergence theorem.

\begin{theorem}\label{thm:cov}
Assume that $A,B,H\in \mathbb{R}^{n\times n}$, and $Q\in \mathbb{R}^{n\times n}$ is nonsingular. Let $A = M - N$ and $Q = Q_1 - Q_2$ with $M$ and $Q_1$ being nonsingular. Denote
\begin{equation}\label{eq:nota}
a=\Vert M^{-1}N\Vert, c=\Vert M^{-1}BQ\Vert, d=\Vert Q_1^{-1}Q_2\Vert, \alpha=\Vert Q_1^{-1}\Vert, \beta=\Vert Q_1^{-1}H\Vert.
\end{equation}
If
\begin{equation}\label{eq:con1}
\big|a|1-\theta|+\theta ad - c\beta\big|<1
\end{equation}
and
\begin{equation}\label{eq:con2}
c(\theta\alpha + 2\beta)<(1 - a)(1 - |1-\theta|-\theta d),
\end{equation}
then GAVEs \eqref{eq:gave} has a unique solution $x^*$ for any $b\in \mathbb{R}^n$ and
the sequence $\{(x^{(k)},y^{(k)})\}_{k=0}^{\infty}$ generated by the GRMS iteration \eqref{eq:grms}converges to $(x^*, y^*=Q^{-1}|x^*|)$.
\end{theorem}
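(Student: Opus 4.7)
The plan is to derive a linear recursion $\delta_k\le T\delta_{k-1}$ on the pair of consecutive-difference norms $\delta_k=\bigl(\|x^{(k+1)}-x^{(k)}\|,\|y^{(k+1)}-y^{(k)}\|\bigr)^{\!\top}$, where $T$ is a $2\times 2$ entrywise nonnegative matrix built from the constants in (\ref{eq:nota}), and to show $\rho(T)<1$ under the hypotheses (\ref{eq:con1})--(\ref{eq:con2}). Summability of $T^k$ will then make $\{(x^{(k)},y^{(k)})\}$ Cauchy, its limit a fixed point of (\ref{eq:grms}) and hence a solution of (\ref{eq:gave}); uniqueness will follow from a parallel subtraction argument.

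To build $T$, I would subtract two consecutive iterates in (\ref{eq:grms}). Using $\||u|-|v|\|\le\|u-v\|$, the triangle inequality, and the notation in (\ref{eq:nota}) gives
\begin{equation*}
\|x^{(k+1)}-x^{(k)}\|\le a\|x^{(k)}-x^{(k-1)}\|+c\|y^{(k)}-y^{(k-1)}\|
\end{equation*}
and
\begin{equation*}
\|y^{(k+1)}-y^{(k)}\|\le(|1-\theta|+\theta d)\|y^{(k)}-y^{(k-1)}\|+(\theta\alpha+\beta)\|x^{(k+1)}-x^{(k)}\|+\beta\|x^{(k)}-x^{(k-1)}\|,
\end{equation*}
where the final $\beta$-term is generated by the momentum piece $Q_1^{-1}H(x^{(k+1)}-x^{(k)})-Q_1^{-1}H(x^{(k)}-x^{(k-1)})$. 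Substituting the first bound into the second produces $T$ with first row $(a,\,c)$ and second row $\bigl(a(\theta\alpha+\beta)+\beta,\;(|1-\theta|+\theta d)+c(\theta\alpha+\beta)\bigr)$.

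A direct computation yields $\det(T)=a|1-\theta|+a\theta d-c\beta$ and $\det(I-T)=(1-a)(1-|1-\theta|-\theta d)-c(\theta\alpha+2\beta)$, so (\ref{eq:con1}) reads exactly $|\det(T)|<1$ and (\ref{eq:con2}) reads exactly $\det(I-T)>0$. I would then invoke the Schur--Cohn criterion applied to $p(\lambda)=\lambda^2-\operatorname{tr}(T)\lambda+\det(T)$: both roots lie in the open unit disk iff $|\det(T)|<1$, $p(1)>0$, and $p(-1)>0$. The first two come from (\ref{eq:con1}) and (\ref{eq:con2}); the third, $p(-1)=1+\operatorname{tr}(T)+\det(T)$, holds because $\operatorname{tr}(T)\ge 0$ and $\det(T)>-1$ are guaranteed by (\ref{eq:con1}). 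Therefore $\rho(T)<1$, $\sum_{k\ge 0}T^k$ converges, and a telescoping argument delivers a Cauchy sequence with limit $(x^*,y^*)$.

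Finally, passing to the limit in (\ref{eq:grms}), the $Q_1^{-1}H$ terms cancel at the fixed point, and the two relations collapse to $Ax^*=BQy^*+b$ and $Qy^*=|x^*|$; combining them gives $Ax^*-B|x^*|-b=0$ with $y^*=Q^{-1}|x^*|$. For uniqueness, applying the same difference estimates to two solutions $x^*_1,x^*_2$ with the corresponding $y^*_i=Q^{-1}|x^*_i|$ (where the momentum contribution drops out since the fixed-point equation eliminates it) yields $(1-a)\|x^*_1-x^*_2\|\le c\|y^*_1-y^*_2\|$ and $(1-|1-\theta|-\theta d)\|y^*_1-y^*_2\|\le\theta\alpha\|x^*_1-x^*_2\|$; chaining them produces the factor $c\theta\alpha/[(1-a)(1-|1-\theta|-\theta d)]<1$ (a consequence of (\ref{eq:con2}) since $\beta\ge 0$), forcing both differences to vanish. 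The main obstacle will be the spectral-radius step: correctly identifying $T$, handling the determinant bookkeeping, and aligning (\ref{eq:con1})--(\ref{eq:con2}) precisely with the Schur--Cohn conditions, in particular leveraging $\operatorname{tr}(T)\ge 0$ to extract $p(-1)>0$ from (\ref{eq:con1}) alone.
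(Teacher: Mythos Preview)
Your proposal follows the same route as the paper: build the $2\times 2$ recursion matrix $T$ on consecutive-difference norms, show $\rho(T)<1$ via the Schur--Cohn/Young criterion, conclude Cauchy convergence to a fixed point, and argue uniqueness by subtracting two solutions. The convergence portion is correct and matches the paper's argument essentially line by line, including the identification of $\det(T)$ and $\det(I-T)$ with (\ref{eq:con1}) and (\ref{eq:con2}).

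There is one genuine gap in the uniqueness step. To ``chain'' the two inequalities $(1-a)\|x^*_1-x^*_2\|\le c\|y^*_1-y^*_2\|$ and $(1-|1-\theta|-\theta d)\|y^*_1-y^*_2\|\le\theta\alpha\|x^*_1-x^*_2\|$ into a contraction, you must divide by $1-a$ and by $1-|1-\theta|-\theta d$ while preserving the inequality direction, which requires both to be \emph{positive}. Condition~(\ref{eq:con2}) alone only forces their product to be positive, so both could in principle be negative (i.e., $a>1$ and $|1-\theta|+\theta d>1$). Ruling this out is a nontrivial step that the paper carries out explicitly: combining (\ref{eq:con1}) with (\ref{eq:con2}) in that regime yields $0<\theta\alpha c<3-(a+|1-\theta|+\theta d)-a(|1-\theta|+\theta d)<0$, a contradiction. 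You should insert this argument (or an equivalent one) before chaining. Apart from this, the plan is sound; indeed your observation that the momentum term cancels exactly at fixed points yields the sharper coefficient $\theta\alpha$ in the second subtraction inequality, where the paper uses the looser $\theta\alpha+2\beta$.
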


\begin{proof}
Let
$$e^{(k+1)}_x=x^{(k+1)} - x^{(k)} \quad \text{and}\quad e^{(k+1)}_y = y^{(k+1)} - y^{(k)},$$
and then from \eqref{eq:grms}, for $k \geq 1$, we have
\begin{equation}\label{eq:er}
\begin{cases}
	e^{(k+1)}_x= M^{-1}Ne^{(k)}_x + M^{-1}BQe^{(k)}_y,\\		
    e^{(k+1)}_y= (1 - \theta)e^{(k)}_y + \theta Q_1^{-1}Q_2e^{(k)}_y + \theta Q_1^{-1}(|x^{(k+1)}| - |x^{(k)}|) + Q_1^{-1}He^{(k+1)}_x - Q_1^{-1}He^{(k)}_x.
\end{cases}
\end{equation}
It follows from \eqref{eq:nota} and  \eqref{eq:er} that
\begin{equation}\label{neq:errors}
\begin{cases}
	\Vert e^{(k+1)}_x\Vert\leq a\Vert e^{(k)}_x\Vert + c\Vert e^{(k)}_y\Vert,\\	
    \Vert e^{(k+1)}_y\Vert\leq \big(|1-\theta| + \theta d\big)\Vert e^{(k)}_y\Vert + \big(\theta\alpha + \beta\big)\Vert e^{(k+1)}_x\Vert + \beta\Vert e^{(k)}_x\Vert,
\end{cases}
\end{equation}
in which the inequality $\||x^{(k+1)}|-|x^{(k)}|\|\le \|x^{(k+1)}-x^{(k)}\|$ is used. It follows from \eqref{neq:errors} that
\begin{equation}\label{eq:errors}
		\begin{bmatrix}
			        1       & 0 \\
			-(\theta\alpha + \beta) & 1
		\end{bmatrix}
		\begin{bmatrix}
			\Vert e_x^{(k+1)}\Vert\\
			\Vert e_y^{(k+1)}\Vert
		\end{bmatrix}
		\leq
		\begin{bmatrix}
			a & c \\
			\beta & |1 - \theta| + \theta d
		\end{bmatrix}
		\begin{bmatrix}
			\Vert e_x^{(k)}\Vert\\
			\Vert e_y^{(k)}\Vert
\end{bmatrix}.			
\end{equation}
Left multiplying both sides of the inequality \eqref{eq:errors} by the nonnegative matrix
$\begin{bmatrix}
			1       & 0 \\
			\theta\alpha + \beta & 1
\end{bmatrix}$,
we have
 \begin{equation}\label{neq:errnorm}
 	\begin{bmatrix}
 		\Vert e_x^{(k+1)}\Vert\\
 		\Vert e_y^{(k+1)}\Vert
 	\end{bmatrix}
 	\leq T
 	\begin{bmatrix}
 		\Vert e_x^{(k)}\Vert\\
 		\Vert e_y^{(k)}\Vert
 	\end{bmatrix},
 \end{equation}
where
\begin{equation}\label{eq:t}
T=
 \begin{bmatrix}
 		a & c\\
    a(\theta\alpha + \beta) + \beta & c(\theta\alpha + \beta) + \theta d + |1-\theta|
 	\end{bmatrix}.
 \end{equation}
From \eqref{neq:errnorm}, we can deduce that
\begin{equation}\label{eq:errorsnorm}
	\begin{bmatrix}
 		\Vert e_x^{(k+1)}\Vert\\
 		\Vert e_y^{(k+1)}\Vert
 	\end{bmatrix}
	\leq  T
    \begin{bmatrix}
 		\Vert e_x^{(k)}\Vert\\
 		\Vert e_y^{(k)}\Vert
 	\end{bmatrix}
         \leq\dots\leq
	      T^{k}
	\begin{bmatrix}
 		\Vert e_x^{(1)}\Vert\\
 		\Vert e_y^{(1)}\Vert
 	\end{bmatrix}.
\end{equation}
Then for $k\geq1$ and for any $m\in\mathbb{N^+}$, if $\rho(T)<1$, it follows from \eqref{eq:errorsnorm} that
\begin{align}\nonumber
\begin{bmatrix}
	\Vert x^{(k+m)} - x^{(k)} \Vert\\
	\Vert y^{(k+m)} - y^{(k)} \Vert
	\end{bmatrix}
	&=
		\begin{bmatrix}
			\Big\Vert \sum\limits_{j=0}^{m-1} e^{(k+j+1)}_x \Big\Vert\\
			\Big\Vert\sum\limits_{j=0}^{m-1} e_y^{(k+j+1)}  \Big\Vert
		\end{bmatrix}
		 \leq
		\begin{bmatrix}
			\sum\limits_{j=0}^{m-1}\Vert e^{(k+j+1)}_x \Vert\\
			\sum\limits_{j=0}^{m-1}\Vert e_y^{(k+j+1)} \Vert
		\end{bmatrix} = \sum\limits_{j=0}^{m-1}\begin{bmatrix}
			\Vert e^{(k+j+1)}_x \Vert\\
			\Vert e_y^{(k+j+1)} \Vert
		\end{bmatrix} \\\nonumber
		&\le
		\left(\sum_{j=0}^{m-1}T^j\right)\left( \begin{bmatrix}
			\Vert e^{(k+1)}_x \Vert\\
			\Vert e_y^{(k+1)} \Vert
		\end{bmatrix}\right)\le \left(\sum_{j=0}^{m-1}T^j\right)\left( T^k
		\begin{bmatrix}
		    \Vert e_x^{(1)} \Vert\\
		    \Vert e_y^{(1)}\Vert
		\end{bmatrix}\right)\\\label{ie:cx}
&\le \left(\sum_{j=0}^{\infty}T^j\right)\left( T^k
		\begin{bmatrix}
		    \Vert e_x^{(1)} \Vert\\
		    \Vert e_y^{(1)}\Vert
		\end{bmatrix}\right)=(I-T)^{-1}T^k
		\begin{bmatrix}
		    \Vert e_x^{(1)} \Vert\\
		    \Vert e_y^{(1)}\Vert
		\end{bmatrix}.
	\end{align}
In addition, if $\rho(T)<1$, and then $\lim\limits_{k\rightarrow\infty}T^k=0$. Hence, for  $\forall\varepsilon>0$, there exist $K\in\mathbb{N}^+$ such that for $k>K$ we obtain
 \begin{equation}\label{3.7}
 |T^k|<\begin{bmatrix}
			\varepsilon & \varepsilon\\
			\varepsilon & \varepsilon
		\end{bmatrix}.
 \end{equation}
Moreover, for $(I-T)^{-1}$ and $\begin{bmatrix}
			\Vert e_x^{(1)} \Vert\\
			\Vert e_y^{(1)} \Vert
\end{bmatrix}$, there is an $s>0$ such that
 \begin{equation}\label{3.8}
 |(I-T)^{-1}|<\begin{bmatrix}
			s & s\\
			s & s
		\end{bmatrix}\quad \text{and}\quad
\left|\begin{bmatrix}
			\Vert e_x^{(1)} \Vert\\
			\Vert e_y^{(1)} \Vert
\end{bmatrix}\right|
 <\begin{bmatrix}
			s \\
			s
\end{bmatrix}.
\end{equation}
According to \eqref{ie:cx}, \eqref{3.7} and \eqref{3.8}, if $\rho(T)<1$, and then for all $\hat{\varepsilon}=4s^2 \varepsilon>0$,
$\exists K>0$~(as mentioned above) such that for all $k>K$ and any $m\in\mathbb{N}^+$ we have
 \begin{equation}\label{eq:cr}
\begin{bmatrix}
			\Vert x^{(k+m)} - x^{(k)} \Vert\\
			\Vert y^{(k+m)} - y^{(k)} \Vert
		\end{bmatrix}
=\left|\begin{bmatrix}
			\Vert x^{(k+m)} - x^{(k)} \Vert\\
			\Vert y^{(k+m)} - y^{(k)} \Vert
		\end{bmatrix}\right|
		\leq
		\left|(I-T)^{-1}\right| \left|T^k\right|\left|
		\begin{bmatrix}
			\Vert e_x^{(1)} \Vert\\
			\Vert e_y^{(1)} \Vert
		\end{bmatrix}\right|
		<
        \begin{bmatrix}
            \hat{\varepsilon} \\
            \hat{\varepsilon} \\
         \end{bmatrix},
	\end{equation}
from which we can conclude that both $\{x^{(k)}\}^\infty_{k=0}$ and $\{y^{(k)}\}^\infty_{k=0}$ are Cauchy sequences. Since the Cauchy sequences $\{x^{(k)}\}^\infty_{k=0} \subseteq \mathbb{R}^n$ and $\{y^{(k)}\}^\infty_{k=0}\subseteq \mathbb{R}^n$, they are convergent. Hence, we can let $\mathop{\lim}\limits_{k\rightarrow\infty}x^{(k)} = x^*$ and $\mathop{\lim}\limits_{k\rightarrow\infty}y^{(k)} = y^*$. Then it follows from \eqref{eq:grms} that
\begin{equation*}
		\begin{cases}
		x^* = M^{-1}( Nx^* + BQy^* + b ),\\
        y^*= Q_1^{-1}\left[( 1 - \theta )Q_1y^* + \theta Q_2y^* + \theta |x^*| + Hx^* - Hx^*\right],
		\end{cases}
	\end{equation*}
which implies that
\begin{equation*}
		\begin{cases}
     Ax^* - BQy^* - b = 0,\\
     Qy^*= |x^*|,
		\end{cases}
	\end{equation*}
that is,
\begin{equation}\label{eq:lgnms}
		\begin{cases}
     Ax^* - B|x^*| - b = 0,\\
     y^*= Q^{-1}|x^*|.
		\end{cases}
	\end{equation}
The first equation of \eqref{eq:lgnms} means that $x^*$ is a solution of GAVEs~\eqref{eq:gave}.

In the following, we will prove that $\rho(T)<1$ if \eqref{eq:con1} and \eqref{eq:con2} hold. Suppose that  $\lambda$ is an eigenvalue of $T$, from \eqref{eq:t}, we have
	\begin{equation*}
		\det( \lambda I - T ) = \lambda^2 - \left[ c ( \theta\alpha + \beta ) + a + |1 - \theta| + \theta d \right]\lambda + a|1 - \theta| + \theta ad - c\beta=0.
	\end{equation*}
   Using \cite[Lemma 2.1]{young1971}, $\rho(T) < 1$ provided that
	\begin{align*}
		\left|a|1 - \theta| + \theta ad - c\beta\right|&<1,\\
        c( \theta\alpha + 2\beta )&<( 1 - a )( 1 - |1 - \theta| - \theta d ).
	\end{align*}
In other words, \eqref{eq:con1} and \eqref{eq:con2} implies $\rho(T) < 1$.

The final step is to prove the uniqueness of the solution to GAVEs \eqref{eq:gave}. Assume that there is another solution $\bar{x}^*$ to GAVEs \eqref{eq:gave}, and then we can uniquely determine $\bar{y}^* = Q^{-1}|\bar{x}^*|$. Using \eqref{eq:etwo},  we have
    \begin{equation}\label{eq:gnmssol}
     \begin{aligned}
   	   x^* - \bar{x}^*&= M^{-1}N ( x^* - \bar{x}^* ) + M^{-1}BQ( y^* - \bar{y}^* ),\\
       y^* - \bar{y}^*&= ( 1 - \theta )( y^* - \bar{y}^* ) + \theta Q_1^{-1}Q_2( y^* - \bar{y}^* )\\
         & \qquad + \theta Q_1^{-1}( |x^*| - |\bar{x}^*| ) + Q_1^{-1}H( x^* - \bar{x}^* ) - Q_1^{-1}H( x^* - \bar{x}^* ).
	 \end{aligned}
   \end{equation}
It follows from \eqref{eq:gnmssol} and \eqref{eq:nota}  that
    \begin{align}
    &\Vert x^* - \bar{x}^*\Vert\leq a \Vert x^* - \bar{x}^*\Vert + c\Vert y^* - \bar{y}^*\Vert,\label{eq:3.8a}\\
    &\Vert y^* - \bar{y}^*\Vert\leq ( |1 - \theta| + \theta d )\Vert y^* - \bar{y}^*\Vert + ( \theta\alpha + 2\beta )\Vert x^* - \bar{x}^*\Vert.\label{eq:3.8b}
    \end{align}
If \eqref{eq:con1} and \eqref{eq:con2} hold, we can conclude that $a<1~\text{and}~|1-\theta| + \theta d<1$. In fact, the condition \eqref{eq:con2} implies that either
$$a<1\quad\text{and}\quad |1-\theta| + \theta d<1$$
or
$$a>1\quad \text{and}\quad |1-\theta| + \theta d>1$$
holds. If $a>1$  and $|1-\theta| + \theta d>1$, we have \begin{equation}\label{ie:ie1}
 a(|1-\theta| + \theta d)>1 \quad \text{and}\quad a + |1-\theta| + \theta d>2.
\end{equation}
In addition, it follows from \eqref{eq:con2} that
\begin{equation}\label{ie:ie2}
   \theta\alpha c < -2c\beta + 1 -( a + |1 - \theta| + \theta d )+ a(|1 - \theta| + \theta d ).
\end{equation}
On the other hand, from the condition \eqref{eq:con1},  we obtain
\begin{equation}\label{ie:ie3}
   -2(a|1-\theta| + \theta ad + 1)<- 2c\beta<2(1-a|1-\theta| - \theta ad).
\end{equation}
Hence, it follows from \eqref{ie:ie1}, \eqref{ie:ie2} and \eqref{ie:ie3} that
\begin{align*}
0<\theta\alpha c < & ~2 - 2(a|1 - \theta| + \theta ad ) + 1 - ( a + |1 - \theta| + \theta d ) + a|1 - \theta| + \theta ad \\
     = & 3 - (a + |1-\theta| + \theta d) - a(|1 - \theta| + \theta d)<0,
\end{align*}
which is a contradiction. Hence, if \eqref{eq:con1} and \eqref{eq:con2} hold, we have $a<1$ and $|1-\theta| + \theta d<1$.

Then it follows from \eqref{eq:3.8b} that
   \begin{equation}\label{eq:x}
   	\Vert y^* - \bar{y}^*\Vert\leq \dfrac{\theta\alpha + 2\beta}{1 - |1 - \theta| - \theta d}\Vert x^* - \bar{x}^*\Vert.
   \end{equation}
According to \eqref{eq:x}, \eqref{eq:3.8a} and \eqref{eq:con2}, we obtain
\begin{eqnarray*}
\Vert x^* - \bar{x}^*\Vert
&\leq& a\Vert x^* - \bar{x}^*\Vert + \dfrac{c(\theta\alpha + 2\beta)}{1 - |1 - \theta| - \theta d}\Vert x^* -  \bar{x}^*\Vert\\
&<& a\Vert x^* - \bar{x}^*\Vert + ( 1 - a )\Vert x^* - \bar{x}^*\Vert\\
&=& \Vert x^* - \bar{x}^*\Vert,
\end{eqnarray*}
which is a contradiction.
\end{proof}

\begin{remark}{\rm
In Theorem~\ref{thm:cov}, we assume that $Q$ is nonsingular so that $y^* = Q^{-1}|x^*|$ is obtained. If we are not interested in the limit of the auxiliary sequence $\{y^{(k)}\}_{k=0}^\infty$, the nonsingularity hypothesis of $Q$ is not required. In this case, the results of Theorem~\ref{thm:cov} except that of $\{y^{(k)}\}_{k=0}^\infty$ still hold. For the sake of clarity, we always remain the nonsingularity of $Q$ in the following.}
\end{remark}

Based on Theorem \ref{thm:cov}, we have the following Corollary~\ref{cor:con1}.

\begin{corollary}\label{cor:con1}
Assume that $A,B,H\in \mathbb{R}^{n\times n}$ and $Q\in \mathbb{R}^{n\times n}$ is nonsingular. Let $A = M - N$ and $Q = Q_1 - Q_2$ with $M$ and $Q_1$ being nonsingular. Let $a, c, d, \alpha$ and $\beta$ be defined as in \eqref{eq:nota}. If
    \begin{equation}\label{neq:con1}
    a<1,~c\beta<ad + 1,~c(2\beta+\alpha)<(1-a)(1-d),
    \end{equation}
and
	\begin{equation}\label{neq:con2}
		\dfrac{2c\beta}{(1-a)(1-d)-c\alpha}<\theta <\dfrac{2(1-a)-2c\beta}{(1-a)(1+d)+c\alpha},
	\end{equation}
then the sequence $\{(x^{(k)},~y^{(k)})\}^\infty_{k=0}$ generated by \eqref{eq:grms} converges to $(x^*,~Q^{-1}|x^*|)$ and $x^*$ is the unique solution of GAVEs \eqref{eq:gave}.
\end{corollary}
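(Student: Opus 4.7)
The plan is to deduce Corollary~\ref{cor:con1} directly from Theorem~\ref{thm:cov} by showing that the hypotheses (\ref{neq:con1}) and (\ref{neq:con2}) together force both (\ref{eq:con1}) and (\ref{eq:con2}); once these are in hand, the corollary's conclusion is exactly the conclusion of the theorem.

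First, I would split on whether $\theta\in(0,1]$ or $\theta>1$, so that the awkward $|1-\theta|$ disappears. In the two regimes, the quantity $1-|1-\theta|-\theta d$ reduces respectively to $\theta(1-d)$ and $2-\theta(1+d)$. Substituting each into (\ref{eq:con2}) and isolating $\theta$ yields exactly the lower bound $\theta>\dfrac{2c\beta}{(1-a)(1-d)-c\alpha}$ in the first regime and the upper bound $\theta<\dfrac{2(1-a)-2c\beta}{(1-a)(1+d)+c\alpha}$ in the second. The last inequality of (\ref{neq:con1}) makes the denominator of the lower bound positive and, via a routine cross-multiplication, also shows that the interval in (\ref{neq:con2}) is non-empty. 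Thus any $\theta$ inside that interval automatically satisfies the piece of (\ref{eq:con2}) appropriate to its regime.

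For (\ref{eq:con1}) I would analyse the piecewise linear function $g(\theta):=a|1-\theta|+\theta ad-c\beta$, which decreases on $(0,1]$, increases on $[1,\infty)$, and attains its global minimum $g(1)=ad-c\beta$. The hypothesis $c\beta<ad+1$ from (\ref{neq:con1}) then gives $g(\theta)\ge g(1)>-1$, handling the lower half of (\ref{eq:con1}) at a stroke. The upper half $g(\theta)<1$ is automatic on $(0,1]$ since there $g(\theta)\le a-c\beta<1$ because $a<1$. On $(1,\infty)$ the inequality $g(\theta)<1$ becomes $\theta<\dfrac{1+a+c\beta}{a(1+d)}$, and the main calculation will be to check that the corollary's upper bound $\theta_{U}=\dfrac{2(1-a)-2c\beta}{(1-a)(1+d)+c\alpha}$ does not exceed this threshold. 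A single cross-multiplication collapses the difference of the two sides to $(1+d)(1-a)^{2}+(1+d)(1+a)c\beta+(1+a)c\alpha+c^{2}\alpha\beta$, which is manifestly non-negative.

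The only step that is not purely routine is this last cross-multiplication comparing $\theta_{U}$ with $\dfrac{1+a+c\beta}{a(1+d)}$; it will be the one place where real arithmetic has to be pushed through. Everything else is direct case analysis on the sign of $1-\theta$ and reading off the bounds already collected in (\ref{neq:con2}). Once (\ref{eq:con1}) and (\ref{eq:con2}) are both confirmed, Theorem~\ref{thm:cov} immediately delivers the unique solvability of GAVEs~(\ref{eq:gave}) for every $b$ and the convergence $\{(x^{(k)},y^{(k)})\}\to(x^{*},Q^{-1}|x^{*}|)$ asserted by the corollary.
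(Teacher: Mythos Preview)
Your proposal is correct and follows essentially the same route as the paper's own proof: a case split on $\theta\le 1$ versus $\theta>1$, with the lower (resp.\ upper) endpoint of \eqref{neq:con2} giving \eqref{eq:con2} in the respective case, and the bound $|a|1-\theta|+\theta ad-c\beta|<1$ handled by comparing $\theta$ against the thresholds $\frac{c\beta\pm(a+1)}{a(d\pm 1)}$. Your packaging via the piecewise linear function $g(\theta)$ is slightly cleaner, and you make explicit the cross-multiplication showing $\theta_U\le\frac{1+a+c\beta}{a(1+d)}$, which the paper simply asserts in \eqref{ie:case2}; just remember to treat the degenerate case $a=0$ separately, since then that threshold is undefined but $g(\theta)=-c\beta<1$ is immediate.
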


\begin{proof}
The corollary will be proved if we can show that \eqref{neq:con1} and \eqref{neq:con2} imply  \eqref{eq:con1} and \eqref{eq:con2}.  The proof is divided into two cases.

\begin{itemize}
  \item [(i)] We first consider $\dfrac{2c\beta}{(1-a)(1-d)-c\alpha}< \theta\leq 1$. It follows from \eqref{neq:con1} that
	$$d<1\quad \text{and} \quad c\beta<ad+1<a+1,$$
from which we have
    \begin{eqnarray}
    \dfrac{c\beta-(a-1)}{a(d-1)}<0<\theta,\label{ieq:1}\\
    \theta\leq1<\dfrac{c\beta-(a+1)}{a(d-1)}. \label{ieq:2}
    \end{eqnarray}
    From the inequality \eqref{ieq:1}, we have
	$$\theta ad - \theta a - c\beta +a<1,$$
    that is,
    \begin{equation}\label{eq:one}
    a(1-\theta)+ \theta ad - c\beta <1.
    \end{equation}
    From the inequality \eqref{ieq:2}, we have
	$$\theta ad - \theta a - c\beta +a>-1,$$
    that is,
    \begin{equation}\label{eq:second}
    a(1-\theta)+ \theta ad - c\beta >-1.
    \end{equation}
Note that $0<\theta\le 1$, combining \eqref{eq:one} and \eqref{eq:second}, we have
    $$\left| a|1-\theta| + \theta ad - c\beta \right|<1,$$
which is the inequality \eqref{eq:con1}.

On the other hand, it follows from \eqref{neq:con1} that $c\alpha<(1-a)(1-d)$ (otherwise, we get $2c \beta < 0$, a contradiction). Multiplying both sides of $\dfrac{2c\beta}{(1-a)(1-d)-c\alpha}< \theta$ by $(1-a)(1-d)-c\alpha$, we have
    \begin{equation}\label{neq:3.1-2}
    2c\beta<\theta(1-a)(1-d)-\theta c\alpha.
    \end{equation}
From \eqref{neq:3.1-2}, we infer that (remind that $0<\theta \le 1$)
	   \begin{equation*}
        c(\theta\alpha + 2\beta)<(1-a)(\theta + 1 - 1 - \theta d)=(1-a)(1-|1-\theta|-\theta d),
	   \end{equation*}
	    which is the inequality \eqref{eq:con2}.

  \item [(ii)] Now we turn to $1<\theta <\dfrac{2(1-a)-2c\beta}{(1-a)(1+d)+c\alpha}$. On the condition of \eqref{neq:con1}
      and~\eqref{neq:con2}, we have
  \begin{align}\label{ie:case1}
  \dfrac{c\beta+a-1}{a(d+1)}<1&<\theta,\\\label{ie:case2}
  \theta<\dfrac{2(1-a)-2c\beta}{(1-a)(1+d)+c\alpha}&<\dfrac{c\beta+a+1}{a(d+1)}.
  \end{align}
  From \eqref{ie:case1}, we have
        $$\theta ad + \theta a - a - c\beta>-1,$$
  that is,
        \begin{equation}\label{neq:3}
        a(\theta-1)+\theta ad - c\beta>-1.
        \end{equation}
        From \eqref{ie:case2}, we have
        $$\theta ad + \theta a - a - c\beta<1,$$
       that is,
        \begin{equation}\label{neq:4}
        a(\theta-1)+\theta ad - c\beta<1.
        \end{equation}
  Note that $\theta >1$, combining \eqref{neq:3} and \eqref{neq:4}, we have
        $$\left|a|1 - \theta|+\theta ad - c\beta \right|<1,$$
        which is the inequality \eqref{eq:con1}.

  On the other hand, if $a<1$, we have $(1-a)(1+d)+c\alpha>0$.
        According to the first inequality of \eqref{ie:case2}, we have
        $$\theta(1-a)(1+d)+\theta c \alpha<2(1-a)-2c\beta,$$
        that is,
        \begin{equation*}
        c(\theta\alpha + 2\beta)<(1-a)(2-\theta-\theta d)
        =(1-a)(1 - |1-\theta| - \theta d),
        \end{equation*}
      which is the inequality \eqref{eq:con2}.
\end{itemize}
The proof is completed by summarizing the results in (i) and (ii).
\end{proof}

In the following, we will apply the results of the Theorem \ref{thm:cov} to some existing methods.

For the RMS iteration method \cite{soso2023}, we have the following corollary.

\begin{corollary}\label{cor:rms}
Assume that $A,B\in \mathbb{R}^{n\times n}$ are given and $A = M_{\rm RMS} - N_{\rm RMS}$ with $M_{\rm RMS}$ being nonsingular, $\theta=\tau_{\rm RMS}>0$. If
\begin{equation}\label{ie:rms1}
|1-\tau_{\rm RMS}|\|M_{\rm RMS}^{-1}N_{\rm RMS}\|<1
\end{equation}
and
\begin{equation}\label{ie:rms2}
\tau_{\rm RMS} \|M_{\rm RMS}^{-1}B\| < (1-\|M_{\rm RMS}^{-1}N_{\rm RMS}\|)(1-|1-\tau_{\rm RMS}|),
\end{equation}
then GAVEs \eqref{eq:gave} has a unique solution $x^*$ for any $b\in \mathbb{R}^n$ and the sequence $\{x^{(k)}\}_{k=0}^{\infty}$ generated by the RMS iteration \eqref{eq:rms} converges to $x^*$.
\end{corollary}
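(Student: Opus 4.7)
The plan is to derive Corollary \ref{cor:rms} as a direct specialization of Theorem \ref{thm:cov}, exploiting the correspondence between the GRMS and RMS iterations established in Section \ref{sec:GRMS}. Recall the remark there: the GRMS iteration \eqref{eq:grms} collapses to the RMS iteration \eqref{eq:rms} precisely when one takes $M = M_{\rm RMS}$, $N = N_{\rm RMS}$, $Q = I$, $Q_1 = (\theta/\tau_{\rm RMS})I$, and $H = 0$. So the first step is simply to adopt these identifications, together with the hypothesis $\theta = \tau_{\rm RMS}$, which further yields $Q_1 = I$ and $Q_2 = Q_1 - Q = 0$.

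The second step is to evaluate the five scalars of \eqref{eq:nota} under this specialization. A direct computation gives $a = \|M_{\rm RMS}^{-1}N_{\rm RMS}\|$, $c = \|M_{\rm RMS}^{-1}B\|$, $d = \|Q_1^{-1}Q_2\| = 0$, $\alpha = \|Q_1^{-1}\| = 1$, and $\beta = \|Q_1^{-1}H\| = 0$. Plugging these values into \eqref{eq:con1}, the expression $|a|1-\theta| + \theta a d - c\beta|$ reduces to $|1-\tau_{\rm RMS}|\,\|M_{\rm RMS}^{-1}N_{\rm RMS}\|$, which is exactly \eqref{ie:rms1}. Plugging them into \eqref{eq:con2}, the left-hand side $c(\theta\alpha + 2\beta)$ becomes $\tau_{\rm RMS}\|M_{\rm RMS}^{-1}B\|$ and the right-hand side $(1-a)(1-|1-\theta|-\theta d)$ becomes $(1-\|M_{\rm RMS}^{-1}N_{\rm RMS}\|)(1-|1-\tau_{\rm RMS}|)$, which is exactly \eqref{ie:rms2}.

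Having matched both hypotheses of Theorem \ref{thm:cov}, the conclusion is immediate: GAVEs \eqref{eq:gave} admits a unique solution $x^*$ for every $b$, and the sequence produced by the RMS recursion converges to $x^*$. Since Corollary \ref{cor:rms} only asserts convergence of the primary iterate $\{x^{(k)}\}$, the nonsingularity assumption on $Q$ used in Theorem \ref{thm:cov} is trivially met here (as $Q = I$), and the choice of auxiliary starting vector $y^{(0)}$ is irrelevant to the stated conclusion.

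There is no genuine obstacle in this argument — it is a bookkeeping specialization. The only point that requires mild care is the simultaneous setting of the free GRMS parameter $\theta$ equal to the relaxation factor $\tau_{\rm RMS}$ of RMS; this coupling is what causes $Q_2$ to vanish and $d$ to drop out, so that the two conditions of Theorem \ref{thm:cov} collapse to the two conditions advertised in the corollary, without any residual term involving $\theta$ as an independent variable.
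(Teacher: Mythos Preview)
Your proposal is correct and follows exactly the approach the paper takes: Corollary~\ref{cor:rms} is stated immediately after the sentence ``In the following, we will apply the results of the Theorem~\ref{thm:cov} to some existing methods,'' with no separate proof, so the intended argument is precisely the parameter specialization $M=M_{\rm RMS}$, $Q=Q_1=I$, $Q_2=0$, $H=0$, $\theta=\tau_{\rm RMS}$ that you carry out. Your verification that \eqref{eq:con1}--\eqref{eq:con2} reduce to \eqref{ie:rms1}--\eqref{ie:rms2} is accurate.
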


\begin{remark}{\rm
The conditions \eqref{ie:rms1} and \eqref{ie:rms2} appear in \cite[Lemma~3]{soso2023}. However, the existence and uniqueness of the solution to GAVEs \eqref{eq:gave} is not considered there.}
\end{remark}

For the NMS iteration method \cite{zhwl2021}, we can obtain the following results.
\begin{corollary}\label{cor:nms}
Assume that $A,B,\Omega_{\rm NMS}\in \mathbb{R}^{n\times n}$ are given. Let $A = M_{\rm NMS} - N_{\rm NMS}$ with $M_{\rm NMS}+\Omega_{\rm NMS}$ being nonsingular. If
\begin{equation}\label{eq:nmsconv}
\Vert{(M_{\rm NMS}+\Omega_{\rm NMS})}^{-1}(N_{\rm NMS}+\Omega_{\rm NMS})\Vert+\Vert{(M_{\rm NMS}+\Omega_{\rm NMS})}^{-1}B\Vert <1,
\end{equation}
then GAVEs \eqref{eq:gave} has a unique solution $x^*$ for any $b\in \mathbb{R}^n$ and the sequence $\{x^{(k)}\}_{k=0}^{\infty}$ generated by the NMS iteration \cite{zhwl2021} converges to $x^*$.
\end{corollary}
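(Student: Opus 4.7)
My plan is to obtain Corollary~\ref{cor:nms} as a direct specialization of Theorem~\ref{thm:cov}. Recall from item~(i) of Section~\ref{sec:intro} that the NMS iteration \eqref{eq:nms} corresponds to the choice of parameters $\theta=1$, $M=M_{\rm NMS}+\Omega_{\rm NMS}$, $N=N_{\rm NMS}+\Omega_{\rm NMS}$ (which preserves $A=M-N$ since $A=M_{\rm NMS}-N_{\rm NMS}$), $Q=Q_1=I$, $Q_2=0$, $H=0$, together with the initial auxiliary vector $y^{(0)}=|x^{(0)}|$ in the GRMS framework \eqref{eq:grms}. Under this choice the $y$-update in \eqref{eq:grms} collapses to $y^{(k+1)}=|x^{(k+1)}|$, so substituting into the $x$-update yields precisely \eqref{eq:nms}.

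Next I would translate the assumptions of Theorem~\ref{thm:cov} into this specialization. The quantities in \eqref{eq:nota} become $a=\|(M_{\rm NMS}+\Omega_{\rm NMS})^{-1}(N_{\rm NMS}+\Omega_{\rm NMS})\|$, $c=\|(M_{\rm NMS}+\Omega_{\rm NMS})^{-1}B\|$, $d=0$, $\alpha=1$, and $\beta=0$. Condition \eqref{eq:con1} reduces to $|a\cdot 0+0-0|=0<1$, which holds trivially. Condition \eqref{eq:con2} reduces to $c(1+0)<(1-a)(1-0-0)$, that is, $a+c<1$, which is exactly the hypothesis \eqref{eq:nmsconv}.

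Hence Theorem~\ref{thm:cov} applies and yields both the existence and uniqueness of the solution $x^*$ to GAVEs~\eqref{eq:gave} for every $b\in\mathbb{R}^n$ and the convergence of the paired GRMS sequence to $(x^*,|x^*|)$. In particular the $x$-component of this sequence, which coincides with the NMS iterates once the initialization $y^{(0)}=|x^{(0)}|$ is imposed, converges to $x^*$. I do not anticipate a substantive obstacle: the argument is a bookkeeping exercise in matching parameters. The only minor point that merits explicit verification is that with $y^{(0)}=|x^{(0)}|$ and $H=Q_2=0$ the auxiliary update forces $y^{(k)}=|x^{(k)}|$ for every $k\ge 0$, thereby reducing the two-sequence GRMS scheme to the single-sequence NMS scheme from the very first step.
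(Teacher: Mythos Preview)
Your proposal is correct and follows essentially the same route as the paper: the paper presents Corollary~\ref{cor:nms} without an explicit proof, deriving it directly from Theorem~\ref{thm:cov} (via the RMS specialization in Corollary~\ref{cor:rms} with $\tau_{\rm RMS}=1$, $M_{\rm RMS}=M_{\rm NMS}+\Omega_{\rm NMS}$), which amounts to exactly the parameter identification and reduction of \eqref{eq:con1}--\eqref{eq:con2} to $a+c<1$ that you carry out. Your observation that $y^{(0)}=|x^{(0)}|$ is needed only for the very first step---since the $y$-update with $\theta=1$, $Q_1=I$, $Q_2=0$, $H=0$ forces $y^{(k+1)}=|x^{(k+1)}|$ automatically thereafter---is the one detail worth recording, and you handle it correctly.
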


\begin{remark}{\rm
In \cite[Corollary 4.2.]{zhwl2021}, the authors show that the sequence $\{x^{(k)}\}_{k=0}^{\infty}$ generated by the NMS iteration converges to a solution of GAVEs \eqref{eq:gave} if
\begin{equation}\label{eq:nms1}
\Vert(M_{\rm NMS} + \Omega_{\rm NMS})^{-1}\Vert(\Vert N_{\rm NMS} + \Omega_{\rm NMS}\Vert +\Vert B\Vert) < 1,
\end{equation}
which implies \eqref{eq:nmsconv} while the reverse is generally not true. Moreover, the existence and uniqueness of the solution to GAVEs \eqref{eq:gave} is not considered in \cite[Corollary 4.2.]{zhwl2021}.}
\end{remark}

\begin{corollary}\label{cor:mn}
Assume that $A,B\in \mathbb{R}^{n\times n}$ are given. Let $A+\Omega_{\rm MN}$ be nonsingular with $\Omega_{\rm MN}$ being a semi-definite matrix. If
\begin{equation}\label{eq:mnconv}
\Vert(A+\Omega_{\rm MN})^{-1}\Omega_{\rm MN}\Vert+\Vert(A+\Omega_{\rm MN})^{-1}B\Vert < 1,
 \end{equation}
then GAVEs \eqref{eq:gave} has a unique solution $x^*$ for any $b\in \mathbb{R}^n$ and the sequence $\{x^{(k)}\}_{k=0}^{\infty}$ generated by the MN iteration \cite{wacc2019} converges to $x^*$.
\end{corollary}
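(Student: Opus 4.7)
The plan is to derive Corollary~\ref{cor:mn} as a direct specialization of Corollary~\ref{cor:nms}. Recall from the discussion following \eqref{eq:nms} that the MN iteration is obtained from the NMS iteration by taking $M_{\rm NMS}=A$ and $\Omega_{\rm NMS}=\Omega_{\rm MN}$ (a positive semi-definite matrix). With this identification, the splitting $A=M_{\rm NMS}-N_{\rm NMS}$ forces $N_{\rm NMS}=0$, so $N_{\rm NMS}+\Omega_{\rm NMS}=\Omega_{\rm MN}$ and $M_{\rm NMS}+\Omega_{\rm NMS}=A+\Omega_{\rm MN}$. The nonsingularity hypothesis required by Corollary~\ref{cor:nms} is precisely the assumed nonsingularity of $A+\Omega_{\rm MN}$ here.

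Plugging these substitutions into the convergence condition \eqref{eq:nmsconv} of Corollary~\ref{cor:nms} produces
$$\Vert(A+\Omega_{\rm MN})^{-1}\Omega_{\rm MN}\Vert+\Vert(A+\Omega_{\rm MN})^{-1}B\Vert<1,$$
which is exactly the hypothesis \eqref{eq:mnconv}. Hence Corollary~\ref{cor:nms} applies, yielding both the existence and uniqueness of a solution $x^*$ of GAVEs~\eqref{eq:gave} for every $b\in\mathbb{R}^n$, together with the convergence of the MN iterates $\{x^{(k)}\}_{k=0}^{\infty}$ to $x^*$.

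There is essentially no obstacle beyond bookkeeping: the entire argument amounts to verifying the parameter correspondence between the MN and NMS schemes. The only subtlety worth highlighting is that the positive semi-definiteness of $\Omega_{\rm MN}$ plays no role beyond the standing requirement that $A+\Omega_{\rm MN}$ be invertible (which is hypothesized separately), so no additional work is needed to invoke Corollary~\ref{cor:nms}.
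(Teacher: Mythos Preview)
Your proposal is correct and matches the paper's approach: Corollary~\ref{cor:mn} is stated without proof in the paper precisely because it is the immediate specialization of Corollary~\ref{cor:nms} obtained by setting $M_{\rm NMS}=A$ (hence $N_{\rm NMS}=0$) and $\Omega_{\rm NMS}=\Omega_{\rm MN}$, exactly as you describe. Your observation that the semi-definiteness of $\Omega_{\rm MN}$ is not actually used in the argument is also accurate.
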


\begin{remark}\label{re:3.2}{\rm
In \cite[Theorem 3.1]{wacc2019}, the authors show that the sequence $\{x^{(k)}\}_{k=0}^{\infty}$ generated by the MN iteration converges to a solution of GAVEs~\eqref{eq:gave} if
\begin{equation}\label{eq:wc}
\|(A+\Omega_{\rm MN})^{-1}\|(\|\Omega_{\rm MN}\| + \|B\|)<1.
\end{equation}
It is obvious that \eqref{eq:wc} implies \eqref{eq:mnconv} while the reverse is generally not true. In addition, the existence and uniqueness of the solution to GAVEs \eqref{eq:gave} is not considered in \cite[Theorem 3.1]{wacc2019}.}
\end{remark}

\begin{corollary}\label{cor:pi}
Assume that $A,B\in \mathbb{R}^{n\times n}$ are given and $A$ is nonsingular. If $\Vert A^{-1}B\Vert< 1$,  then GAVEs \eqref{eq:gave} has a unique solution $x^*$ for any $b\in \mathbb{R}^n$ and the sequence $\{x^{(k)}\}_{k=0}^{\infty}$ generated by the Picard iteration \cite{rohf2014} converges to $x^*$.
\end{corollary}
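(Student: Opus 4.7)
The plan is to derive Corollary \ref{cor:pi} as a direct specialization of Theorem \ref{thm:cov}, exploiting the fact that the Picard iteration \eqref{eq:pi} is recovered from the GRMS iteration \eqref{eq:grms} by choosing the simplest possible splittings. Specifically, I would take $M=A$, $N=0$ (so that $A=M-N$), $Q=Q_1=I$, $Q_2=0$, $H=0$, and $\theta=1$. Under these choices a quick inspection of \eqref{eq:grms} yields $x^{(k+1)}=A^{-1}(By^{(k)}+b)$ and $y^{(k+1)}=|x^{(k+1)}|$, so that $x^{(k+1)}=A^{-1}(B|x^{(k)}|+b)$ for $k\geq 1$, which is precisely \eqref{eq:pi} (with the convention $y^{(0)}=|x^{(0)}|$ to align initializations, as is already used in the derivation of \eqref{eq:nms}).

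With these parameter choices I would then compute the constants in \eqref{eq:nota}: $a=\|M^{-1}N\|=0$, $c=\|M^{-1}BQ\|=\|A^{-1}B\|$, $d=\|Q_1^{-1}Q_2\|=0$, $\alpha=\|Q_1^{-1}\|=1$, and $\beta=\|Q_1^{-1}H\|=0$. Substituting into \eqref{eq:con1} gives $|a|1-\theta|+\theta ad-c\beta|=0<1$, which holds trivially, and substituting into \eqref{eq:con2} gives $c(\theta\alpha+2\beta)=\|A^{-1}B\|<(1-a)(1-|1-\theta|-\theta d)=1$, which is exactly the hypothesis $\|A^{-1}B\|<1$.

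Since both convergence conditions of Theorem \ref{thm:cov} are satisfied, the theorem guarantees that GAVEs \eqref{eq:gave} admits a unique solution $x^*$ for every $b\in\mathbb{R}^n$, and that the sequence $\{x^{(k)}\}_{k=0}^\infty$ produced by the resulting instance of \eqref{eq:grms}—that is, the Picard iteration \eqref{eq:pi}—converges to $x^*$. There is no real obstacle here: the only minor bookkeeping is the matching of initial vectors (taking $y^{(0)}=|x^{(0)}|$) so that the GRMS sequence coincides with the Picard sequence from the first iterate, and this is already the same device invoked in the excerpt when deducing \eqref{eq:nms} from \eqref{eq:rms}. The argument is a clean one-line specialization once the parameter dictionary is written down.
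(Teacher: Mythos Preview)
Your proposal is correct and follows essentially the same approach the paper intends: Corollary~\ref{cor:pi} is stated without proof as a direct specialization of Theorem~\ref{thm:cov} (via the Picard case $M_{\rm NMS}=A$, $\Omega_{\rm NMS}=0$ of the NMS framework), and your parameter choices $M=A$, $N=0$, $Q=Q_1=I$, $Q_2=0$, $H=0$, $\theta=1$ together with the computation reducing \eqref{eq:con1}--\eqref{eq:con2} to $\|A^{-1}B\|<1$ are exactly the intended verification. The bookkeeping with $y^{(0)}=|x^{(0)}|$ is handled correctly and matches the device the paper uses when deriving \eqref{eq:nms}.
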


\begin{remark}{\rm
 In \cite[Theorem 2]{rohf2014}, the authors show that the sequence $\{x^{(k)}\}_{k=0}^{\infty}$ generated by the Picard iteration converges to the unique solution $x^*$ of GAVEs \eqref{eq:gave} if  $\rho(|A^{-1}B|)<1$. From \cite[Theorem 5.6.9.]{prhc2006}, we have $\rho(|A^{-1}B|)\leq\Vert A^{-1}B\Vert$ if $A^{-1}B \geq 0$. However, if $A^{-1}B \ngeqslant0$, $\rho(|A^{-1}B|) < 1$  and $\Vert A^{-1}B\Vert < 1$ can be independent of each other. For instance, if $A=
			\begin{bmatrix}
				-10 & 6\\
				2 & -95
			\end{bmatrix}$
			and $B=
			\begin{bmatrix}
				7 & -9\\
				8 & 1
			\end{bmatrix}$, we have $A^{-1}B\approx
			\begin{bmatrix}
				-0.7601 & 0.9051\\
				-0.1002 & 0.0085
			\end{bmatrix}\ngeq 0$ with $\Vert A^{-1}B\Vert\approx1.1841>1$ and $\rho(\vert A^{-1}B\vert)\approx0.8659<1$. If $A=
			\begin{bmatrix}
				12 & -0.5\\
				-0.5 & 12
			\end{bmatrix}$
			 and $B=
			\begin{bmatrix}
				7 & -8\\
				-6 & -4
			\end{bmatrix}$, and then $A^{-1}B\approx
			\begin{bmatrix}
				0.5635 & -0.6817\\
				-0.4765 & -0.3617
			\end{bmatrix}\ngeq 0$ with $\Vert A^{-1}B\Vert\approx0.8851<1$ and $\rho(\vert A^{-1}B\vert)\approx1.0414>1$.}
\end{remark}

For the MGSOR iteration \cite{zhzl2023}, we have the following corollary.
\begin{corollary}\label{cor:exmgsor}
Assume that $A,B,Q_{\rm MGSOR}\in\mathbb{R}^{n\times n}$ are given and let $A$, $Q_{\rm MGSOR}$ be nonsingular matrices, $\theta=\beta_{\rm MGSOR}>0$ and $\alpha_{\rm MGSOR}>0$. If
\begin{subequations}\label{ie:mgsor1}
\begin{equation}
|1-\alpha_{\rm MGSOR}||1-\beta_{\rm MGSOR}|<1
\end{equation}
\text{and}
\begin{equation}
\alpha_{\rm MGSOR}\beta_{\rm MGSOR}\Vert A^{-1}BQ_{\rm MGSOR}\Vert\Vert Q_{\rm MGSOR}^{-1}\Vert<(1-|1-\alpha_{\rm MGSOR}|)(1-|1-\beta_{\rm MGSOR}|),
\end{equation}
\end{subequations}
then GAVEs \eqref{eq:gave} has a unique solution $x^*$ for any $b\in \mathbb{R}^n$ and the sequence $\{(x^{(k)}, y^{(k)})\}_{k=0}^{\infty}$ generated by the MGSOR iteration \eqref{eq:mgsor4gave} converges to $(x^*,y^*=Q_{\rm MGSOR}^{-1}|x^*|)$ for any given initial vectors $x^{(0)},y^{(0)}\in \mathbb{R}^n$.
\end{corollary}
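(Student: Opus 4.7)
The plan is to derive Corollary~\ref{cor:exmgsor} as a direct specialization of Theorem~\ref{thm:cov}, using the identification of MGSOR as a special case of GRMS that was already spelled out in item (ii) of Section~\ref{sec:GRMS}. Concretely, I would take $M=\frac{1}{\alpha_{\rm MGSOR}}A$ (so that $N=M-A=\frac{1-\alpha_{\rm MGSOR}}{\alpha_{\rm MGSOR}}A$), $Q=Q_{\rm MGSOR}$, $Q_1=\frac{\theta}{\beta_{\rm MGSOR}}Q_{\rm MGSOR}$ (so that $Q_2=Q_1-Q=\frac{\theta-\beta_{\rm MGSOR}}{\beta_{\rm MGSOR}}Q_{\rm MGSOR}$), $H=0$, and then fix $\theta=\beta_{\rm MGSOR}$ as the corollary instructs.

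The next step is simply to compute the five norms that appear in \eqref{eq:nota}. With the above choices and $\theta=\beta_{\rm MGSOR}$, a direct calculation gives
\[
a=\|M^{-1}N\|=|1-\alpha_{\rm MGSOR}|,\quad c=\|M^{-1}BQ\|=\alpha_{\rm MGSOR}\|A^{-1}BQ_{\rm MGSOR}\|,
\]
\[
d=\|Q_1^{-1}Q_2\|=\Big|\tfrac{\theta-\beta_{\rm MGSOR}}{\theta}\Big|=0,\quad \alpha=\|Q_1^{-1}\|=\|Q_{\rm MGSOR}^{-1}\|,\quad \beta=\|Q_1^{-1}H\|=0.
\]
Substituting these values into the hypothesis \eqref{eq:con1} of Theorem~\ref{thm:cov} collapses it to
\[
\bigl|a|1-\theta|+\theta ad-c\beta\bigr|=|1-\alpha_{\rm MGSOR}|\,|1-\beta_{\rm MGSOR}|<1,
\]
which is exactly the first inequality of \eqref{ie:mgsor1}. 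Likewise, \eqref{eq:con2} becomes
\[
c(\theta\alpha+2\beta)=\alpha_{\rm MGSOR}\beta_{\rm MGSOR}\|A^{-1}BQ_{\rm MGSOR}\|\,\|Q_{\rm MGSOR}^{-1}\|<(1-|1-\alpha_{\rm MGSOR}|)(1-|1-\beta_{\rm MGSOR}|),
\]
which matches the second inequality of \eqref{ie:mgsor1}.

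With the two hypotheses of Theorem~\ref{thm:cov} verified under \eqref{ie:mgsor1}, the theorem immediately yields that GAVEs~\eqref{eq:gave} admits a unique solution $x^*$ for every $b\in\mathbb{R}^n$ and that the GRMS sequence produced by the present parameter choices converges to $(x^*,y^*)$ with $y^*=Q^{-1}|x^*|=Q_{\rm MGSOR}^{-1}|x^*|$. Since the iteration in question coincides with the MGSOR iteration \eqref{eq:mgsor4gave} as observed in Section~\ref{sec:GRMS}, the conclusion of the corollary follows.

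There is essentially no hard step here: the only thing to be careful about is the bookkeeping in evaluating $a,c,d,\alpha,\beta$, in particular the cancellation of $\theta$ in $\alpha=\|Q_1^{-1}\|$ and the vanishing of both $d$ and $\beta$ under the specific choices $\theta=\beta_{\rm MGSOR}$ and $H=0$. Because $d=\beta=0$, both sides of \eqref{eq:con1} and \eqref{eq:con2} simplify in parallel to the stated MGSOR hypotheses, so no additional argument beyond Theorem~\ref{thm:cov} is required.
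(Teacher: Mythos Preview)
Your proposal is correct and follows exactly the approach the paper intends: the corollary is stated without proof precisely because it is an immediate specialization of Theorem~\ref{thm:cov} via the parameter choices $M=\frac{1}{\alpha_{\rm MGSOR}}A$, $Q=Q_{\rm MGSOR}$, $Q_1=\frac{\theta}{\beta_{\rm MGSOR}}Q_{\rm MGSOR}$, $H=0$, $\theta=\beta_{\rm MGSOR}$ spelled out in Section~\ref{sec:GRMS}. Your computations of $a,c,d,\alpha,\beta$ and the resulting reduction of \eqref{eq:con1}--\eqref{eq:con2} to \eqref{ie:mgsor1} are accurate.
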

When $B=I$ we can have the following Corollary \ref{cor:mgsor} directly from Corollary \ref{cor:exmgsor}.
\begin{corollary}\label{cor:mgsor}
Assume that $A,Q_{\rm MGSOR}\in \mathbb{R}^{n\times n}$ are given nonsingular matrices, $\theta=\beta_{\rm MGSOR}>0$ and $\alpha_{\rm MGSOR}>0$. If
\begin{subequations}\label{ie:mgsor2}
\begin{equation}
|1-\alpha_{\rm MGSOR}||1-\beta_{\rm MGSOR}|<1
\end{equation}
\text{and}
\begin{equation}
\alpha_{\rm MGSOR}\beta_{\rm MGSOR}\Vert A^{-1}Q_{\rm MGSOR}\Vert\Vert Q_{\rm MGSOR}^{-1}\Vert<(1-|1-\alpha_{\rm MGSOR}|)(1-|1-\beta_{\rm MGSOR}|),
\end{equation}
\end{subequations}
then AVEs \eqref{eq:ave} has a unique solution $x^*$ for any $b\in \mathbb{R}^n$ and the sequence $\{(x^{(k)}, y^{(k)})\}_{k=0}^{\infty}$ generated by the MGSOR iteration \eqref{eq:mgsor} converges to $(x^*,y^*=Q_{\rm MGSOR}^{-1}|x^*|)$ for any given initial vectors $x^{(0)},y^{(0)}\in \mathbb{R}^n$.
\end{corollary}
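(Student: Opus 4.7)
The plan is to obtain Corollary~\ref{cor:mgsor} as an immediate specialization of Corollary~\ref{cor:exmgsor} to the case $B=I$, so essentially no new work is needed. First I would note that setting $B=I$ in GAVEs~\eqref{eq:gave} recovers AVEs~\eqref{eq:ave}, and setting $B=I$ in the MGSOR iteration~\eqref{eq:mgsor4gave} for GAVEs reproduces exactly the MGSOR iteration~\eqref{eq:mgsor} for AVEs. Therefore the hypotheses on $A$, $Q_{\rm MGSOR}$, $\alpha_{\rm MGSOR}$, $\beta_{\rm MGSOR}$ and $\theta$ in Corollary~\ref{cor:mgsor} match those of Corollary~\ref{cor:exmgsor} verbatim after this substitution.

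Next I would verify that the convergence inequalities~\eqref{ie:mgsor2} coincide with~\eqref{ie:mgsor1} when $B=I$. The first inequality $|1-\alpha_{\rm MGSOR}||1-\beta_{\rm MGSOR}|<1$ is identical in both statements. For the second inequality, the factor $\Vert A^{-1}BQ_{\rm MGSOR}\Vert$ appearing in~\eqref{ie:mgsor1} becomes $\Vert A^{-1}Q_{\rm MGSOR}\Vert$ once $B=I$ is inserted, which is precisely the expression in~\eqref{ie:mgsor2}. Hence the hypotheses of Corollary~\ref{cor:exmgsor} are satisfied.

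Applying Corollary~\ref{cor:exmgsor}, one concludes that AVEs~\eqref{eq:ave} admits a unique solution $x^*$ for every $b\in\mathbb{R}^n$ and that the sequence $\{(x^{(k)},y^{(k)})\}_{k=0}^\infty$ produced by \eqref{eq:mgsor} converges to $(x^*,Q_{\rm MGSOR}^{-1}|x^*|)$ for any starting pair $x^{(0)},y^{(0)}\in\mathbb{R}^n$, which is the conclusion of Corollary~\ref{cor:mgsor}. There is no serious obstacle: the only thing to be careful about is matching the notation between the GAVE and AVE versions of MGSOR and confirming that the substitution $B=I$ is compatible with the invertibility assumptions, both of which are immediate.
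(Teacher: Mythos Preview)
Your proposal is correct and matches the paper's approach exactly: the paper states that Corollary~\ref{cor:mgsor} follows directly from Corollary~\ref{cor:exmgsor} by taking $B=I$, which is precisely the specialization you carry out.
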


\begin{remark}{\rm
The convergence condition \eqref{ie:mgsor2} is new, which is not appeared in \cite{zhzl2023}. In addition, the existence and uniqueness of the solution to AVEs~\eqref{eq:ave} is not considered in \cite{zhzl2023}.}
\end{remark}

When $\alpha_{\rm MGSOR}=1$, $\beta_{\rm MGSOR} = \tau_{\rm MFPI}$ and $Q_{\rm MGSOR}=Q_{\rm MFPI}$, the following Corollary~\ref{cor:exmfpi} follows from Corollary~\ref{cor:exmgsor}.
\begin{corollary}\label{cor:exmfpi}
Assume that $A, Q_{\rm MFPI}\in \mathbb{R}^{n\times n}$ are given nonsingular matrices and $B\in\mathbb{R}^{n\times n}$ is given, $\theta = \tau_{\rm MFPI}>0$. If
\begin{equation*}
0< \Vert A^{-1}BQ_{\rm MFPI}\Vert\Vert Q^{-1}_{\rm MFPI}\Vert <\frac{1-|1-\tau_{\rm MFPI}|}{\tau_{\rm MFPI}},
\end{equation*}
then GAVEs \eqref{eq:gave} has a unique solution $x^*$ for any $b\in \mathbb{R}^n$ and the sequence $\{(x^{(k)}, y^{(k)})\}_{k=0}^{\infty}$ generated by the MFPI iteration \eqref{eq:mfpi4gave} converges to $(x^*,y^*=Q_{\rm MFPI}^{-1}|x^*|)$ for any given initial vectors $x^{(0)},y^{(0)}\in \mathbb{R}^n$.
\end{corollary}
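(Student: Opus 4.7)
The plan is to derive Corollary~\ref{cor:exmfpi} as a direct specialization of Corollary~\ref{cor:exmgsor}. Recall that in Section~\ref{sec:GRMS} the MFPI iteration~\eqref{eq:mfpi4gave} was identified as the instance of the MGSOR iteration~\eqref{eq:mgsor4gave} obtained by setting $\alpha_{\rm MGSOR}=1$, $\beta_{\rm MGSOR}=\tau_{\rm MFPI}$, and $Q_{\rm MGSOR}=Q_{\rm MFPI}$. Consequently, it suffices to substitute these parameter choices into the hypotheses of Corollary~\ref{cor:exmgsor} and verify that the resulting pair of inequalities~\eqref{ie:mgsor1} collapses to the single one stated in Corollary~\ref{cor:exmfpi}.

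First I would plug the values into the first condition of~\eqref{ie:mgsor1}: it becomes $|1-1|\cdot|1-\tau_{\rm MFPI}|=0<1$, which is trivially satisfied for every $\tau_{\rm MFPI}>0$, so no constraint on $\tau_{\rm MFPI}$ arises from this part. Next I would examine the second condition; with the same substitutions it reads
\begin{equation*}
\tau_{\rm MFPI}\,\Vert A^{-1}BQ_{\rm MFPI}\Vert\,\Vert Q_{\rm MFPI}^{-1}\Vert
<\bigl(1-|1-1|\bigr)\bigl(1-|1-\tau_{\rm MFPI}|\bigr)
=1-|1-\tau_{\rm MFPI}|.
\end{equation*}
Dividing by $\tau_{\rm MFPI}>0$ yields precisely the upper bound $\dfrac{1-|1-\tau_{\rm MFPI}|}{\tau_{\rm MFPI}}$ appearing in the hypothesis of Corollary~\ref{cor:exmfpi}; the accompanying strict positivity of $\Vert A^{-1}BQ_{\rm MFPI}\Vert\,\Vert Q_{\rm MFPI}^{-1}\Vert$ is compatible because $Q_{\rm MFPI}$ is nonsingular and only ensures that the parameter $\tau_{\rm MFPI}$ is meaningfully constrained.

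Having verified that the hypotheses of Corollary~\ref{cor:exmfpi} are exactly equivalent to those of Corollary~\ref{cor:exmgsor} under the indicated specialization, I would invoke Corollary~\ref{cor:exmgsor} to conclude that GAVEs~\eqref{eq:gave} admits a unique solution $x^*$ for every $b\in\mathbb{R}^n$ and that the iterates $\{(x^{(k)},y^{(k)})\}_{k=0}^\infty$ produced by~\eqref{eq:mfpi4gave} converge to $(x^*,\,Q_{\rm MFPI}^{-1}|x^*|)$ for any initial pair. There is no substantive obstacle here; the only bookkeeping point is to confirm that the equivalence between MFPI~\eqref{eq:mfpi4gave} and MGSOR~\eqref{eq:mgsor4gave} under the chosen parameters (already noted in Section~\ref{sec:GRMS}) preserves the iterates exactly, so that the convergence transferred from Corollary~\ref{cor:exmgsor} pertains to the MFPI sequence stated in the corollary.
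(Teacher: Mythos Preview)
Your proposal is correct and follows exactly the same route as the paper: the paper states explicitly that Corollary~\ref{cor:exmfpi} follows from Corollary~\ref{cor:exmgsor} by setting $\alpha_{\rm MGSOR}=1$, $\beta_{\rm MGSOR}=\tau_{\rm MFPI}$, and $Q_{\rm MGSOR}=Q_{\rm MFPI}$, without giving further detail. Your verification that the two conditions in~\eqref{ie:mgsor1} collapse to the single inequality in the hypothesis is precisely the intended computation.
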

When $\alpha_{\rm MGSOR}=1$, $\beta_{\rm MGSOR} = \tau_{\rm MFPI}$, $Q_{\rm MGSOR}=Q_{\rm MFPI}$ and $B=I$ we can establish the following Corollary \ref{cor:mfpi} from Corollary \ref{cor:mgsor}.
\begin{corollary}\label{cor:mfpi}
Suppose $A, Q_{\rm MFPI}\in \mathbb{R}^{n\times n}$ are given nonsingular matrices and $\theta = \tau_{\rm MFPI}>0$. If
\begin{equation}\label{ie:mfpi}
0< \Vert A^{-1}Q_{\rm MFPI}\Vert\Vert Q^{-1}_{\rm MFPI}\Vert <1\quad \text{and}\quad 0<\tau_{\rm MFPI}<\frac{2}{1 + \Vert A^{-1}Q_{\rm MFPI}\Vert\Vert Q^{-1}_{\rm MFPI}\Vert},
\end{equation}
 then AVEs \eqref{eq:ave} has a unique solution $x^*$ for any $b\in \mathbb{R}^n$ and the sequence $\{(x^{(k)}, y^{(k)})\}_{k=0}^{\infty}$ generated by the MFPI iteration \eqref{eq:mfpi} converges to $(x^*,y^*=Q_{\rm MFPI}^{-1}|x^*|)$ for any given initial vectors $x^{(0)},y^{(0)}\in \mathbb{R}^n$.
\end{corollary}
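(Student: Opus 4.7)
The plan is to derive Corollary \ref{cor:mfpi} as a direct specialization of Corollary \ref{cor:mgsor} with the parameter choices $\alpha_{\rm MGSOR}=1$, $\beta_{\rm MGSOR}=\tau_{\rm MFPI}$, and $Q_{\rm MGSOR}=Q_{\rm MFPI}$, under which the MGSOR iteration \eqref{eq:mgsor} collapses to the MFPI iteration \eqref{eq:mfpi}, as noted just before the statement.

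First I would substitute these parameter values into the two convergence conditions in \eqref{ie:mgsor2}. Condition (a) becomes $|1-1|\cdot|1-\tau_{\rm MFPI}|=0<1$, which holds automatically. Condition (b) becomes
\begin{equation*}
\tau_{\rm MFPI}\,\Vert A^{-1}Q_{\rm MFPI}\Vert\,\Vert Q_{\rm MFPI}^{-1}\Vert \;<\; 1-|1-\tau_{\rm MFPI}|.
\end{equation*}
Writing $p:=\Vert A^{-1}Q_{\rm MFPI}\Vert\,\Vert Q_{\rm MFPI}^{-1}\Vert$, the task reduces to showing that this single inequality together with $\tau_{\rm MFPI}>0$ is equivalent to the two-part hypothesis \eqref{ie:mfpi}, namely $0<p<1$ and $0<\tau_{\rm MFPI}<\dfrac{2}{1+p}$.

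Next I would split into the two cases determined by the sign of $1-\tau_{\rm MFPI}$. If $0<\tau_{\rm MFPI}\le 1$, then $|1-\tau_{\rm MFPI}|=1-\tau_{\rm MFPI}$ and the inequality simplifies to $\tau_{\rm MFPI}p<\tau_{\rm MFPI}$, i.e.\ $p<1$; note that in this regime $\tau_{\rm MFPI}\le 1<\dfrac{2}{1+p}$ automatically. If $1<\tau_{\rm MFPI}<2$ (the constraint $\tau_{\rm MFPI}<2$ is forced by the right-hand side being positive), then $|1-\tau_{\rm MFPI}|=\tau_{\rm MFPI}-1$ and the inequality becomes $\tau_{\rm MFPI}(p+1)<2$, equivalently $\tau_{\rm MFPI}<\dfrac{2}{1+p}$, which in turn requires $p<1$ in order that the upper bound exceeds $1$. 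Combining the two cases yields exactly \eqref{ie:mfpi}, so \eqref{ie:mgsor2} holds, and Corollary \ref{cor:mgsor} delivers both existence/uniqueness of the solution $x^*$ of AVEs \eqref{eq:ave} and the convergence of $\{(x^{(k)},y^{(k)})\}$ to $(x^*,Q_{\rm MFPI}^{-1}|x^*|)$.

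I expect no serious obstacle here; the argument is a routine specialization and a short case analysis on $|1-\tau_{\rm MFPI}|$. The only point worth double-checking is the equivalence in the case $\tau_{\rm MFPI}\le 1$, to make sure the bound $\tau_{\rm MFPI}<\dfrac{2}{1+p}$ is indeed implied rather than lost, and that the strict positivity $p>0$ is used only to state the nontriviality of the bound (it is automatic from $Q_{\rm MFPI}$ being nonsingular).
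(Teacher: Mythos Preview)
Your proposal is correct and follows exactly the route indicated in the paper: specialize Corollary~\ref{cor:mgsor} with $\alpha_{\rm MGSOR}=1$, $\beta_{\rm MGSOR}=\tau_{\rm MFPI}$, $Q_{\rm MGSOR}=Q_{\rm MFPI}$, observe that condition~(a) becomes trivial, and rewrite condition~(b) via the case split on $|1-\tau_{\rm MFPI}|$ to recover \eqref{ie:mfpi}. The paper itself gives no further detail beyond citing Corollary~\ref{cor:mgsor}, so your write-up is in fact more explicit than the original.
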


\begin{remark}{\rm
In \cite{yuch2022}, the authors show that the sequence $\{x^{(k)}\}_{k=0}^{\infty}$ generated by the MFPI iteration \eqref{eq:mfpi} converges to the unique solution $x^*$ of AVEs \eqref{eq:ave} if
\begin{equation}\label{ie:omfpi}
0<\nu <\dfrac{1}{\ell\sqrt{1+\varrho^2}}\quad \text{and}\quad
  \dfrac{1-\sqrt{1-\ell^2\nu^2}}{1-\ell\varrho\nu}<\tau_{\rm MFPI}<\dfrac{1+\sqrt{1-\ell^2\nu^2}}{1+\ell\varrho\nu},
\end{equation}
where $\nu = \Vert A^{-1}\Vert, \varrho=\Vert Q^{-1}_{\rm MFPI}\Vert, \ell=\Vert Q_{\rm MFPI}\Vert$.

It follows from $0<\nu <\dfrac{1}{\ell\sqrt{1+\varrho^2}}$ that
$$0<\Vert A^{-1}Q_{\rm MFPI}\Vert\Vert Q^{-1}_{\rm MFPI}\Vert\le \Vert A^{-1}\Vert \Vert Q_{\rm MFPI}\Vert\Vert Q^{-1}_{\rm MFPI}\Vert <1.$$
In addition, $0<\nu <\dfrac{1}{\ell\sqrt{1+\varrho^2}}$ implies  $\dfrac{1-\sqrt{1-\ell^2\nu^2}}{1-\ell\varrho\nu}>0$ and $\frac{2}{1 + \Vert A^{-1}Q_{\rm MFPI}\Vert\Vert Q^{-1}_{\rm MFPI}\Vert} \ge \frac{2}{1 + \Vert A^{-1}\Vert \Vert Q_{\rm MFPI}\Vert\Vert Q^{-1}_{\rm MFPI}\Vert} > \dfrac{1+\sqrt{1-\ell^2\nu^2}}{1+\ell\varrho\nu}$. Hence, \eqref{ie:mfpi} is weaker than \eqref{ie:omfpi}.}
\end{remark}

If $Q_{\rm MFPI}= I$ and $\tau_{\rm MFPI} = \tau_{\rm FPI}$, the following corollary follows from Corollary \ref{cor:exmfpi}.
\begin{corollary}\label{cor:exfpi1}
Let $A,B\in\mathbb{R}^{n\times n}$ be given and assume that $A$ is nonsingular, $\theta=\tau_{\rm FPI}>0$. If
\begin{equation*}
\|A^{-1}B\|<\frac{|1-\tau_{\rm FPI}|}{\tau_{\rm FPI}}
\end{equation*}
then GAVEs \eqref{eq:gave} has a unique solution $x^*$ for any $b\in \mathbb{R}^n$ and the sequence $\{(x^{(k)}, y^{(k)})\}_{k=0}^{\infty}$ generated by the FPI iteration \eqref{eq:fpi4gave} converges to $(x^*,y^*=|x^*|)$ for any given initial vectors $x^{(0)},y^{(0)}\in \mathbb{R}^n$.
\end{corollary}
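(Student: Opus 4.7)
The plan is to obtain Corollary \ref{cor:exfpi1} as an immediate specialization of Corollary \ref{cor:exmfpi} by setting $Q_{\rm MFPI} = I$ and $\tau_{\rm MFPI} = \tau_{\rm FPI}$. First I would verify that, under this choice, the MFPI iteration \eqref{eq:mfpi4gave} collapses to the FPI iteration \eqref{eq:fpi4gave}: the block $x^{(k+1)} = A^{-1}(B Q_{\rm MFPI} y^{(k)} + b)$ becomes $x^{(k+1)} = A^{-1}(B y^{(k)} + b)$, and the block $y^{(k+1)} = (1-\tau_{\rm MFPI}) y^{(k)} + \tau_{\rm MFPI} Q_{\rm MFPI}^{-1} |x^{(k+1)}|$ becomes $y^{(k+1)} = (1-\tau_{\rm FPI}) y^{(k)} + \tau_{\rm FPI} |x^{(k+1)}|$. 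Thus the two iterative sequences coincide for the same initial data, and the limit $y^* = Q_{\rm MFPI}^{-1} |x^*|$ given by Corollary \ref{cor:exmfpi} reduces to $y^* = |x^*|$.

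Next I would translate the convergence hypothesis. Substituting $Q_{\rm MFPI} = I$ in the inequality $0 < \|A^{-1} B Q_{\rm MFPI}\|\,\|Q_{\rm MFPI}^{-1}\| < (1-|1-\tau_{\rm MFPI}|)/\tau_{\rm MFPI}$ of Corollary \ref{cor:exmfpi} yields $0 < \|A^{-1} B\| < (1-|1-\tau_{\rm FPI}|)/\tau_{\rm FPI}$, and the existence and uniqueness of $x^*$ for every $b \in \mathbb{R}^n$ together with the convergence of $\{(x^{(k)}, y^{(k)})\}$ to $(x^*, |x^*|)$ follow at once. I note in passing that the numerator displayed in the statement appears to be a typographical slip ($|1-\tau_{\rm FPI}|$ in place of $1-|1-\tau_{\rm FPI}|$), but once the intended bound is read off the specialization is mechanical.

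As a cross-check, and because it requires only a line, I would also verify the conclusion directly from Theorem \ref{thm:cov}. With $M = A$, $N = 0$, $Q = Q_1 = I$, $Q_2 = 0$, $H = 0$, and $\theta = \tau_{\rm FPI}$, the quantities in \eqref{eq:nota} specialize to $a = 0$, $c = \|A^{-1}B\|$, $d = 0$, $\alpha = 1$, $\beta = 0$. Condition \eqref{eq:con1} then reads $0 < 1$ and is automatic, while condition \eqref{eq:con2} reduces to $\tau_{\rm FPI}\,\|A^{-1}B\| < 1 - |1-\tau_{\rm FPI}|$, matching the corrected hypothesis. Since there is no genuine mathematical obstacle beyond these substitutions, the main care point is simply bookkeeping: making sure the parameter identifications $(M, N, Q, Q_1, Q_2, H, \theta)$ consistently reproduce \eqref{eq:fpi4gave} and that the bound on $\|A^{-1}B\|$ is stated with the correct numerator.
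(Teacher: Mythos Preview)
Your approach is exactly the one the paper uses: it states explicitly that Corollary~\ref{cor:exfpi1} follows from Corollary~\ref{cor:exmfpi} by taking $Q_{\rm MFPI}=I$ and $\tau_{\rm MFPI}=\tau_{\rm FPI}$. Your observation that the displayed numerator should read $1-|1-\tau_{\rm FPI}|$ rather than $|1-\tau_{\rm FPI}|$ is correct, and your extra cross-check via Theorem~\ref{thm:cov} is a nice bonus the paper does not include.
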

When $Q_{\rm MFPI}= I$, $\tau_{\rm MFPI} = \tau_{\rm FPI}$ and $B=I$ we can have the following result follows from Corollary~\ref{cor:mfpi}.
\begin{corollary}\label{cor:fpi1}
Let $A\in \mathbb{R}^{n\times n}$ be a given nonsingular matrix and $\nu=\|A^{-1}\|$. If
\begin{equation}\label{eq:fpiconv1}
0<\nu<1 \quad \text{and} \quad 0<\tau_{\rm FPI}<\frac{2}{1+\nu},
\end{equation}
then AVEs \eqref{eq:ave} has a unique solution $x^*$ for any $b\in \mathbb{R}^n$ and the sequence $\{(x^{(k)}, y^{(k)})\}_{k=0}^{\infty}$ generated by the FPI iteration \eqref{eq:fpi} converges to $(x^*,y^*=|x^*|)$ for any given initial vectors $x^{(0)},y^{(0)}\in \mathbb{R}^n$.
\end{corollary}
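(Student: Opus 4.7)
The plan is to derive this statement as a direct specialization of Corollary~\ref{cor:mfpi}. Since the FPI iteration~\eqref{eq:fpi} is exactly the MFPI iteration~\eqref{eq:mfpi} in the special case $Q_{\rm MFPI}=I$, $\tau_{\rm MFPI}=\tau_{\rm FPI}$ (a quick side-by-side inspection of the two update formulas confirms this), the corollary will follow at once once I substitute $Q_{\rm MFPI}=I$ into the hypotheses~\eqref{ie:mfpi} and check that they collapse to~\eqref{eq:fpiconv1}.

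First I would observe that with $Q_{\rm MFPI}=I$ one has $\|Q_{\rm MFPI}\|=\|Q_{\rm MFPI}^{-1}\|=1$, and therefore the product $\|A^{-1}Q_{\rm MFPI}\|\,\|Q_{\rm MFPI}^{-1}\|$ simplifies to $\|A^{-1}\|=\nu$. The left-hand condition in~\eqref{ie:mfpi} then reads $0<\nu<1$, while the upper bound $\frac{2}{1+\|A^{-1}Q_{\rm MFPI}\|\,\|Q_{\rm MFPI}^{-1}\|}$ becomes $\frac{2}{1+\nu}$. Thus the hypotheses of Corollary~\ref{cor:mfpi} become, in this specialization, exactly the assumptions~\eqref{eq:fpiconv1}.

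Having matched the hypotheses, I would invoke the conclusion of Corollary~\ref{cor:mfpi} verbatim: AVEs~\eqref{eq:ave} admits a unique solution $x^*$ for every $b\in\mathbb{R}^n$, and the iterates $\{(x^{(k)},y^{(k)})\}_{k=0}^\infty$ converge to $(x^*,\,y^*=Q_{\rm MFPI}^{-1}|x^*|)=(x^*,|x^*|)$ for any initial pair $(x^{(0)},y^{(0)})$. There is essentially no obstacle here; the substance of the argument resides in Corollary~\ref{cor:mfpi} (and ultimately in Theorem~\ref{thm:cov}), and what remains is a routine verification that the substitution $Q_{\rm MFPI}=I$, $\tau_{\rm MFPI}=\tau_{\rm FPI}$, $B=I$ is legitimate and turns the general bound $\|A^{-1}Q_{\rm MFPI}\|\,\|Q_{\rm MFPI}^{-1}\|$ into the single scalar $\nu$.
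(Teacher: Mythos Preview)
Your proposal is correct and matches the paper's approach exactly: the paper introduces Corollary~\ref{cor:fpi1} simply as the specialization of Corollary~\ref{cor:mfpi} with $Q_{\rm MFPI}=I$, $\tau_{\rm MFPI}=\tau_{\rm FPI}$ and $B=I$, which is precisely what you do.
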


\begin{remark}{\rm
In \cite{ke2020},  Ke show that the sequence  $\{x^{(k)}\}_{k=0}^{\infty}$ generated by the FPI iteration~\eqref{eq:fpi} converges to the unique solution  of AVEs~\eqref{eq:ave} if
\begin{equation}\label{eq:fpi1}
0 < \nu <\frac{\sqrt{2}}{2} \quad \text{and} \quad \frac{1-\sqrt{1-\nu^2}}{1-\nu}<\tau_{\rm FPI}<\frac{1+\sqrt{1-\nu^2}}{1+\nu}.
\end{equation}
It is obvious that $0< \dfrac{1-\sqrt{1-\nu^2}}{1-\nu}$ and $\dfrac{1+\sqrt{1-\nu^2}}{1+\nu}<\dfrac{2}{1+\nu}$ when $0<\nu <1$. Hence, $\{\nu: 0<\nu < \frac{\sqrt{2}}{2}\} \subseteq \{\nu: 0<\nu < 1\}$ and $\{\tau_{\rm FPI}: \frac{1-\sqrt{1-\nu^2}}{1-\nu}<\tau_{\rm FPI}<\frac{1+\sqrt{1-\nu^2}}{1+\nu}\} \subseteq \{\tau_{\rm FPI}: 0<\tau_{\rm FPI} < \frac{2}{1+ \nu}\}$. In other words,  \eqref{eq:fpiconv1} is weaker than \eqref{eq:fpi1}. Moreover, we get the theoretical improvement here without using the modification strategy proposed in \cite{yuch2022}.}
\end{remark}

When $\alpha_{\rm MGSOR}=\beta_{\rm MGSOR} = \omega_{\rm SOR}$ and $Q_{\rm MGSOR}=I$, the following corollary follows from Corollary \ref{cor:exmgsor}.
\begin{corollary}\label{cor:exsor}
Let $A,B\in \mathbb{R}^{n\times n}$ be given and assume that $A$ is invertible, $\theta =\omega_{\rm SOR}>0$. If
\begin{equation*}
0<\omega_{\rm SOR} <2 \quad \text{and} \quad 0< \|A^{-1}B\|<\Big(\dfrac{1-|1-\omega_{\rm SOR}|}{\omega_{\rm SOR}}\Big)^2,
\end{equation*}
then GAVEs \eqref{eq:gave} has a unique solution $x^*$ for any $b\in \mathbb{R}^n$ and the sequence $\{(x^{(k)}, y^{(k)})\}_{k=0}^{\infty}$ generated by the SOR-like iteration \eqref{it:sor4gave} converges to $(x^*,y^*=|x^*|)$ for any given initial vectors $x^{(0)},y^{(0)}\in \mathbb{R}^n$.
\end{corollary}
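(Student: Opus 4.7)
The plan is to apply Corollary \ref{cor:exmgsor} with the specializations $\alpha_{\rm MGSOR}=\beta_{\rm MGSOR}=\omega_{\rm SOR}$ and $Q_{\rm MGSOR}=I$, under which the MGSOR iteration \eqref{eq:mgsor4gave} coincides with the SOR-like iteration \eqref{it:sor4gave}, as already observed in Section \ref{sec:GRMS}. First I would substitute these parameter values into the two hypotheses \eqref{ie:mgsor1} of Corollary \ref{cor:exmgsor} and verify that they reduce exactly to the two conditions stated in the present corollary.

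For the first hypothesis, setting $\alpha_{\rm MGSOR}=\beta_{\rm MGSOR}=\omega_{\rm SOR}$ yields $|1-\omega_{\rm SOR}|^2<1$, which is equivalent to $|1-\omega_{\rm SOR}|<1$, i.e., $0<\omega_{\rm SOR}<2$. For the second hypothesis, additionally using $Q_{\rm MGSOR}=I$ gives $\|A^{-1}BQ_{\rm MGSOR}\|=\|A^{-1}B\|$ and $\|Q_{\rm MGSOR}^{-1}\|=1$, so the inequality becomes $\omega_{\rm SOR}^2\|A^{-1}B\|<(1-|1-\omega_{\rm SOR}|)^2$; dividing by $\omega_{\rm SOR}^2>0$ yields exactly $\|A^{-1}B\|<\left(\dfrac{1-|1-\omega_{\rm SOR}|}{\omega_{\rm SOR}}\right)^2$. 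Hence both hypotheses of Corollary \ref{cor:exmgsor} are in force, and the conclusion about existence, uniqueness of $x^*$ and convergence of $\{(x^{(k)},y^{(k)})\}_{k=0}^{\infty}$ (with $y^{\ast}=Q_{\rm MGSOR}^{-1}|x^{\ast}|=|x^{\ast}|$) follows immediately.

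There is essentially no obstacle here: the result is a direct parameter specialization of Corollary \ref{cor:exmgsor}, and the only thing to verify is that the two algebraic conditions transform cleanly under the substitution, which they do. The only caveat is to ensure the two iteration schemes are identified correctly, but this identification has already been established in Section \ref{sec:GRMS} when presenting \eqref{it:sor4gave} as the specialization of \eqref{eq:mgsor4gave} corresponding to $\alpha_{\rm MGSOR}=\beta_{\rm MGSOR}=\omega_{\rm SOR}$ and $Q_{\rm MGSOR}=I$, so no separate verification of the iterative map is required.
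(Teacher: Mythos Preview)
Your proposal is correct and follows exactly the paper's own approach: the paper states that this corollary follows from Corollary~\ref{cor:exmgsor} by setting $\alpha_{\rm MGSOR}=\beta_{\rm MGSOR}=\omega_{\rm SOR}$ and $Q_{\rm MGSOR}=I$, and your verification of how the two hypotheses in \eqref{ie:mgsor1} reduce to $0<\omega_{\rm SOR}<2$ and $\|A^{-1}B\|<\bigl((1-|1-\omega_{\rm SOR}|)/\omega_{\rm SOR}\bigr)^2$ is accurate.
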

When  $\alpha_{\rm MGSOR}=\beta_{\rm MGSOR} = \omega_{\rm SOR}$, $Q_{\rm MGSOR}=I$ and $B=I$ we can have the following result from Corollary \ref{cor:mgsor}.
\begin{corollary}\label{cor:sor}
Let $A\in \mathbb{R}^{n\times n}$ be a given nonsingular matrix, $\nu= \|A^{-1}\|$ and $\theta = \omega_{\rm SOR}>0$. If
\begin{equation}\label{ie:sor}
0< \nu <1 \quad \text{and} \quad 0< \omega_{\rm SOR}<\frac{2-2\sqrt{\nu}}{1-\nu},
\end{equation}
then AVEs \eqref{eq:ave} has a unique solution $x^*$ for any $b\in \mathbb{R}^n$ and the sequence $\{(x^{(k)}, y^{(k)})\}_{k=0}^{\infty}$ generated by the SOR-like iteration \eqref{it:sor} converges to $(x^*,y^*=|x^*|)$ for any given initial vectors $x^{(0)},y^{(0)}\in \mathbb{R}^n$.
\end{corollary}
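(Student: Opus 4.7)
The plan is to derive Corollary \ref{cor:sor} as a direct specialization of Corollary \ref{cor:mgsor} with the choices $\alpha_{\rm MGSOR} = \beta_{\rm MGSOR} = \omega_{\rm SOR}$ and $Q_{\rm MGSOR} = I$. Under this specialization, condition \eqref{ie:mgsor2} collapses to the pair $|1-\omega_{\rm SOR}|^2 < 1$ and $\omega_{\rm SOR}^2 \nu < (1-|1-\omega_{\rm SOR}|)^2$, so the task reduces to showing that hypothesis \eqref{ie:sor} of Corollary \ref{cor:sor} is equivalent to (or implies) these two inequalities.

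First I would observe that the bound $\omega_{\rm SOR} < \frac{2-2\sqrt{\nu}}{1-\nu} = \frac{2}{1+\sqrt{\nu}}$ sits in $(1,2)$ whenever $0 < \nu < 1$, so the admissible range of $\omega_{\rm SOR}$ is contained in $(0,2)$, which gives $|1-\omega_{\rm SOR}| < 1$ and settles the first part of \eqref{ie:mgsor2}. It also makes $1 - |1-\omega_{\rm SOR}| > 0$, so taking positive square roots the second inequality is equivalent to $\omega_{\rm SOR}\sqrt{\nu} < 1 - |1-\omega_{\rm SOR}|$.

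Next I would split on the sign of $1-\omega_{\rm SOR}$. For $0 < \omega_{\rm SOR} \le 1$, the right-hand side equals $\omega_{\rm SOR}$, and the inequality reduces to $\sqrt{\nu} < 1$, which is automatic from $\nu < 1$; since in this range $\omega_{\rm SOR} \le 1 < \frac{2}{1+\sqrt{\nu}}$, this case is covered by \eqref{ie:sor}. For $1 < \omega_{\rm SOR} < 2$, the right-hand side equals $2-\omega_{\rm SOR}$, and the inequality becomes $\omega_{\rm SOR}(1+\sqrt{\nu}) < 2$, i.e., $\omega_{\rm SOR} < \frac{2}{1+\sqrt{\nu}} = \frac{2-2\sqrt{\nu}}{1-\nu}$, which is exactly the upper bound in \eqref{ie:sor}. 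Thus \eqref{ie:sor} implies both parts of \eqref{ie:mgsor2}.

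Having verified the hypotheses of Corollary \ref{cor:mgsor}, the existence and uniqueness of the solution $x^*$ of AVEs \eqref{eq:ave} and the convergence $(x^{(k)},y^{(k)}) \to (x^*,|x^*|)$ follow immediately, completing the proof. There is no real obstacle here, the only thing that takes a little care is the elementary algebraic simplification $\frac{2}{1+\sqrt{\nu}} = \frac{2-2\sqrt{\nu}}{1-\nu}$ (obtained by multiplying by $\frac{1-\sqrt{\nu}}{1-\sqrt{\nu}}$) and the case analysis on $|1-\omega_{\rm SOR}|$, both of which are routine.
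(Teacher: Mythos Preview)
Your proposal is correct and follows exactly the paper's approach: the paper simply states that Corollary~\ref{cor:sor} is obtained from Corollary~\ref{cor:mgsor} by setting $\alpha_{\rm MGSOR}=\beta_{\rm MGSOR}=\omega_{\rm SOR}$, $Q_{\rm MGSOR}=I$ and $B=I$, without writing out the elementary verification that \eqref{ie:sor} implies \eqref{ie:mgsor2}. You have supplied precisely that verification, and your algebra (the simplification $\frac{2-2\sqrt{\nu}}{1-\nu}=\frac{2}{1+\sqrt{\nu}}$ and the case split on $|1-\omega_{\rm SOR}|$) is correct.
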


\begin{remark}\label{rem:SOR}{\rm
In \cite[Corollary 3.1]{KM2017}, under a kernel norm,  Ke and Ma show that the SOR-like iteration converges to the unique solution of GAVEs~\eqref{eq:gave} if
\begin{equation}\label{ie:osor}
0<\nu <1 \quad \text{and} \quad 1- \frac{2}{3+ \sqrt{5}}< \omega_{\rm SOR}< \min\left\{ 1 +\frac{2}{3+ \sqrt{5}}, \sqrt{\frac{2}{(3+\sqrt{5})\nu}}\right\}.
\end{equation}
If $0< \nu < \left(\frac{1+\sqrt{5}}{5+  \sqrt{5}}\right)^2$, we have $\frac{2-2\sqrt{\nu}}{1-\nu} > 1 +\frac{2}{3+ \sqrt{5}} \ge \min\left\{ 1 +\frac{2}{3+ \sqrt{5}}, \sqrt{\frac{2}{(3+\sqrt{5})\nu}}\right\}$, which implies that \eqref{ie:sor} is weaker than \eqref{ie:osor}.}
\end{remark}

For the MSOR iteration \cite{huli2022}, we have the following argument.

\begin{corollary}\label{cor:exmsor}
Let $A,B\in\mathbb{R}^{n\times n}$ be two given nonsingular matrices, $0<\theta=\omega_{\rm MSOR}\neq 2$ and $\alpha_{\rm MSOR}>0$. If
\begin{align*}
\bigg| |1-\alpha_{\rm MSOR}\omega_{\rm MSOR}||1-\omega_{\rm MSOR}|&+\omega_{\rm MSOR}|1-\alpha_{\rm MSOR}\omega_{\rm MSOR}|\left\|I-\frac{2}{2-\omega_{\rm MSOR}}B\right\|\\
&-\frac{2\omega_{\rm MSOR}|1-\alpha_{\rm MSOR}\omega_{\rm MSOR}|}{|2-\omega_{\rm MSOR}|}\|A^{-1}B\|\|A\|\bigg|<1
\end{align*}
and
\begin{align*}
&\frac{2\omega_{\rm MSOR}}{|2-\omega_{\rm MSOR}|}\|A^{-1}B\|(\omega_{\rm MSOR}\alpha_{\rm MSOR}\|B\|+2|1-\alpha_{\rm MSOR}\omega_{\rm MSOR}|\|A\|)\\
 &\qquad <(1-|1-\alpha_{\rm MSOR}\omega_{\rm MSOR}|)
 \left(1-|1-\omega_{\rm MSOR}|-\omega_{\rm MSOR}\left\|I-\frac{2}{2-\omega_{\rm MSOR}}B\right\|\right),
\end{align*}
then GAVE \eqref{eq:gave} has a unique solution $x^*$ and the sequence $\{x^k\}_{k=0}^{\infty}$ generated by the MSOR iteration \eqref{eq:msor4gave} converges to $x^*$.
\end{corollary}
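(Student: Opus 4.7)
The strategy is to realize Corollary \ref{cor:exmsor} as a direct instantiation of Theorem \ref{thm:cov} via the MSOR identification described in part (iii) of Section \ref{sec:GRMS}. Concretely, I set $M=\tfrac{1}{\alpha_{\rm MSOR}\omega_{\rm MSOR}}A$, $Q=I$, $Q_1=\tfrac{\theta(2-\omega_{\rm MSOR})}{2\omega_{\rm MSOR}}B^{-1}$, $H=\tfrac{\theta(1-\alpha_{\rm MSOR}\omega_{\rm MSOR})}{\alpha_{\rm MSOR}\omega_{\rm MSOR}}B^{-1}A$, and take $\theta=\omega_{\rm MSOR}$. The splittings $A=M-N$ and $I=Q_1-Q_2$ then force $N=\tfrac{1-\alpha_{\rm MSOR}\omega_{\rm MSOR}}{\alpha_{\rm MSOR}\omega_{\rm MSOR}}A$ and $Q_2=Q_1-I$, while nonsingularity of $M$ and $Q_1$ follows from the nonsingularity of $A$ and $B$ together with $\alpha_{\rm MSOR}\omega_{\rm MSOR}\neq 0$ and $\omega_{\rm MSOR}\neq 2$. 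As already verified in Section \ref{sec:GRMS}, the GRMS scheme \eqref{eq:grms} with these choices coincides with the MSOR iteration \eqref{eq:msor4gave}, so it suffices to check that the two hypotheses of Theorem \ref{thm:cov} are exactly the hypotheses of the corollary.

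Next I compute the five quantities in \eqref{eq:nota}. Elementary substitutions (using $\theta=\omega_{\rm MSOR}$ throughout so that $\theta$ cancels wherever it appears inside $Q_1$ and $H$) yield $M^{-1}N=(1-\alpha_{\rm MSOR}\omega_{\rm MSOR})I$, hence $a=|1-\alpha_{\rm MSOR}\omega_{\rm MSOR}|$; $M^{-1}BQ=\alpha_{\rm MSOR}\omega_{\rm MSOR}A^{-1}B$, hence $c=\alpha_{\rm MSOR}\omega_{\rm MSOR}\|A^{-1}B\|$; $Q_1^{-1}=\tfrac{2}{2-\omega_{\rm MSOR}}B$, hence $\alpha=\tfrac{2}{|2-\omega_{\rm MSOR}|}\|B\|$; $Q_1^{-1}Q_2=I-Q_1^{-1}=I-\tfrac{2}{2-\omega_{\rm MSOR}}B$, hence $d=\bigl\|I-\tfrac{2}{2-\omega_{\rm MSOR}}B\bigr\|$; and finally $Q_1^{-1}H=\tfrac{2(1-\alpha_{\rm MSOR}\omega_{\rm MSOR})}{\alpha_{\rm MSOR}(2-\omega_{\rm MSOR})}A$, hence $\beta=\tfrac{2|1-\alpha_{\rm MSOR}\omega_{\rm MSOR}|}{\alpha_{\rm MSOR}|2-\omega_{\rm MSOR}|}\|A\|$.

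It remains to plug these five expressions into conditions \eqref{eq:con1} and \eqref{eq:con2}. A direct calculation shows that $a|1-\theta|+\theta ad-c\beta$ equals
\[
|1-\alpha_{\rm MSOR}\omega_{\rm MSOR}|\,|1-\omega_{\rm MSOR}|+\omega_{\rm MSOR}|1-\alpha_{\rm MSOR}\omega_{\rm MSOR}|\bigl\|I-\tfrac{2}{2-\omega_{\rm MSOR}}B\bigr\|-\tfrac{2\omega_{\rm MSOR}|1-\alpha_{\rm MSOR}\omega_{\rm MSOR}|}{|2-\omega_{\rm MSOR}|}\|A^{-1}B\|\,\|A\|,
\]
so \eqref{eq:con1} reproduces verbatim the first hypothesis of the corollary. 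Similarly, $c(\theta\alpha+2\beta)=\tfrac{2\omega_{\rm MSOR}}{|2-\omega_{\rm MSOR}|}\|A^{-1}B\|\bigl(\omega_{\rm MSOR}\alpha_{\rm MSOR}\|B\|+2|1-\alpha_{\rm MSOR}\omega_{\rm MSOR}|\|A\|\bigr)$, and $(1-a)(1-|1-\theta|-\theta d)$ matches the right-hand side of the second hypothesis. Therefore both conditions of Theorem \ref{thm:cov} hold and the conclusion — existence and uniqueness of $x^*$ together with convergence of $\{x^{(k)}\}$ to $x^*$ — follows at once. No conceptual obstacle arises; the only thing requiring care is the bookkeeping of $|2-\omega_{\rm MSOR}|$ versus $2-\omega_{\rm MSOR}$, since the corollary permits $\omega_{\rm MSOR}>2$, which is precisely why absolute values appear in the stated conditions.
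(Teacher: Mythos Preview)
Your proposal is correct and follows exactly the approach the paper intends: the corollary is stated without proof precisely because it is a direct specialization of Theorem~\ref{thm:cov} under the MSOR parameter choices listed in item~(iii) of Section~\ref{sec:GRMS}, and your explicit computation of $a,c,d,\alpha,\beta$ and verification of \eqref{eq:con1}--\eqref{eq:con2} are accurate.
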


We can have the following result from Corollary \ref{cor:exmsor} with $B=I$.
\begin{corollary}
Let $A\in\mathbb{R}^{n\times n}$ be a given nonsingular matrix and $0<\theta=\omega_{\rm MSOR}\neq 2,~\alpha_{\rm MSOR}>0$. If
\begin{align}\nonumber
\bigg| |1-\alpha_{\rm MSOR}\omega_{\rm MSOR}||1-\omega_{\rm MSOR}|&+\omega_{\rm MSOR}|1-\alpha_{\rm MSOR}\omega_{\rm MSOR}|\Big|1-\frac{2}{2-\omega_{\rm MSOR}}\Big|\\\label{ie:msor1}
&-\frac{2\omega_{\rm MSOR}|1-\alpha_{\rm MSOR}\omega_{\rm MSOR}|}{|2-\omega_{\rm MSOR}|}\|A^{-1}\|\|A\|\bigg|<1
\end{align}
and
\begin{align}\nonumber
&\frac{2\omega_{\rm MSOR}}{|2-\omega_{\rm MSOR}|}\|A^{-1}\|\Big(\omega_{\rm MSOR}\alpha_{\rm MSOR}+2|1-\alpha_{\rm MSOR}\omega_{\rm MSOR}|\|A\|\Big)\\\label{ie:msor2}
&\qquad <\Big(1-|1-\alpha_{\rm MSOR}\omega_{\rm MSOR}|\Big)
\Big(1-|1-\omega_{\rm MSOR}|-\omega_{\rm MSOR}|1-\frac{2}{2-\omega_{\rm MSOR}}|\Big),
\end{align}
then AVE \eqref{eq:ave} has a unique solution $x^*$ for any $b\in \mathbb{R}^n$ and the sequence $\{x^k\}_{k=0}^{\infty}$ generated by the MSOR iteration \eqref{eq:socmsor} converges to $x^*$.
\end{corollary}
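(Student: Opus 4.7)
The plan is to obtain this corollary as a direct specialization of Corollary~\ref{cor:exmsor} to the case $B=I$. Since AVE~\eqref{eq:ave} is nothing but GAVE~\eqref{eq:gave} with $B=I$, and the MSOR iteration~\eqref{eq:socmsor} for AVE is the $B=I$ instance of the MSOR iteration~\eqref{eq:msor4gave} for GAVE, once one verifies that the two hypotheses \eqref{ie:msor1} and \eqref{ie:msor2} of the present corollary coincide with the hypotheses of Corollary~\ref{cor:exmsor} after setting $B=I$, both the existence and uniqueness of $x^*$ and the convergence of $\{x^{(k)}\}$ will follow immediately.

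The core step is therefore a routine simplification of the matrix-norm quantities appearing in Corollary~\ref{cor:exmsor} under the specialization $B=I$. The substitutions I would carry out are $\|B\|=1$, $\|A^{-1}B\|=\|A^{-1}\|$, and $\left\|I-\tfrac{2}{2-\omega_{\rm MSOR}}B\right\|=\left|1-\tfrac{2}{2-\omega_{\rm MSOR}}\right|$. Plugging these three identities into the first inequality of Corollary~\ref{cor:exmsor} will reproduce \eqref{ie:msor1} term for term, and plugging them into the second inequality will reproduce \eqref{ie:msor2}. The remaining parameters $\alpha_{\rm MSOR}$, $\omega_{\rm MSOR}$ and the sign/range constraints $\alpha_{\rm MSOR}>0$ and $0<\omega_{\rm MSOR}\neq 2$ are untouched by the specialization, so nothing else needs to change.

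The main obstacle here is merely bookkeeping rather than any genuine mathematical difficulty: I must match each occurrence of $\|B\|$, $\|A^{-1}B\|$, and $\left\|I-\tfrac{2}{2-\omega_{\rm MSOR}}B\right\|$ with its correct scalar counterpart and keep the outer absolute value in~\eqref{ie:msor1} in place. Once this is done, both conclusions of Corollary~\ref{cor:exmsor}, namely the unique solvability of GAVE for every $b\in\mathbb{R}^n$ and the convergence of the iterates generated by \eqref{eq:msor4gave}, transfer verbatim to the unique solvability of AVE~\eqref{eq:ave} and the convergence of the sequence generated by \eqref{eq:socmsor} to $x^*$, completing the proof.
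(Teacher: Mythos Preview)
Your proposal is correct and is exactly the approach taken in the paper: the corollary is obtained from Corollary~\ref{cor:exmsor} by setting $B=I$, which reduces $\|B\|$, $\|A^{-1}B\|$, and $\left\|I-\tfrac{2}{2-\omega_{\rm MSOR}}B\right\|$ to $1$, $\|A^{-1}\|$, and $\left|1-\tfrac{2}{2-\omega_{\rm MSOR}}\right|$, respectively. The paper in fact gives no separate proof at all, simply noting that the result follows from Corollary~\ref{cor:exmsor} with $B=I$.
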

\begin{remark}{\rm
 The conditions \eqref{ie:msor1} and \eqref{ie:msor2} are different from the conditions of \cite[Theorem 3.2]{huli2022} and the unique existence of the solution of
 AVEs~\eqref{eq:ave} is not explored there.}
\end{remark}

For the NSOR iteration, we can have the following result.

\begin{corollary}\label{cor:nsor4gave}
Let $A,B\in\mathbb{R}^{n\times n}$ be two given nonsingular matrices, $\omega_{\rm NSOR}>0,~\sigma_{\rm NSOR}>0$. If
\begin{subequations}\label{eq:nsor1}
\begin{equation}
\Big||1-\omega_{\rm NSOR}|\| I - \frac{\omega_{\rm NSOR}^2}{\sigma_{\rm NSOR}}B\| - \frac{\omega_{\rm NSOR}^2|1-\omega_{\rm NSOR}|}{\sigma_{\rm NSOR}}\|A^{-1}B\|\|A\|\Big|<1~
\end{equation}
and
\begin{equation}
\frac{\omega_{\rm NSOR}^2}{\sigma_{\rm NSOR}}\|A^{-1}B\|\Big(\|B\| + |1-\omega_{\rm NSOR}|\| A\|\Big) <\Big(1-|1-\omega_{\rm NSOR}|\Big)\Big(1-\| I - \frac{\omega_{\rm NSOR}^2}{\sigma_{\rm NSOR}}B\|\Big),
\end{equation}
\end{subequations}
then GAVE \eqref{eq:gave} has a unique solution $x^*$ for any $b\in \mathbb{R}^n$ and the sequence $\{x^k\}_{k=0}^{\infty}$ generated by the NSOR iteration \eqref{eq:nsor4gave} converges to $x^*$.
\end{corollary}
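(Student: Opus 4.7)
The plan is to reproduce the proof architecture of Theorem~\ref{thm:cov} --- error recursion, matrix domination, Young's lemma, Cauchy sequence --- but with one crucial twist: I would substitute the $x$-error into the $y$-error at the \emph{vector} level before taking any norms. This refinement exploits the algebraic cancellation $AA^{-1}=I$ hidden in the NSOR iteration and is what produces the coefficient $\omega_{\rm NSOR}^2/\sigma_{\rm NSOR}$ appearing in \eqref{eq:nsor1}; a direct instantiation of Theorem~\ref{thm:cov} in the natural GRMS parameters ($M=\frac{1}{\omega_{\rm NSOR}}A$, $Q=I$, $Q_1=\tfrac{\theta\sigma_{\rm NSOR}}{\omega_{\rm NSOR}}B^{-1}$, $H=\tfrac{\theta(1-\omega_{\rm NSOR})}{\omega_{\rm NSOR}}B^{-1}A$) yields only $\omega_{\rm NSOR}/\sigma_{\rm NSOR}$ and a strictly coarser conclusion.

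Writing $e_x^{(k)}=x^{(k)}-x^*$ and $e_y^{(k)}=y^{(k)}-y^*$ with $y^*=|x^*|$ and $b=Ax^*-B|x^*|$, I would first subtract the fixed-point identity from \eqref{eq:nsor4gave} to obtain the vector recursions $e_x^{(k+1)}=(1-\omega_{\rm NSOR})e_x^{(k)}+\omega_{\rm NSOR}A^{-1}Be_y^{(k)}$ and $e_y^{(k+1)}=e_y^{(k)}+\tfrac{\omega_{\rm NSOR}}{\sigma_{\rm NSOR}}B(|x^{(k+1)}|-|x^*|)-\tfrac{\omega_{\rm NSOR}}{\sigma_{\rm NSOR}}Ae_x^{(k+1)}$. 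Plugging the first into the $Ae_x^{(k+1)}$ term of the second, the product $AA^{-1}=I$ collapses the coefficient of $e_y^{(k)}$ down to $I-\tfrac{\omega_{\rm NSOR}^2}{\sigma_{\rm NSOR}}B$. Taking $2$-norms and using $\||u|-|v|\|\le\|u-v\|$ then yields a scalar system in $(\|e_x^{(k)}\|,\|e_y^{(k)}\|)$ with coefficients $a=|1-\omega_{\rm NSOR}|$, $c=\omega_{\rm NSOR}\|A^{-1}B\|$, $d=\|I-\tfrac{\omega_{\rm NSOR}^2}{\sigma_{\rm NSOR}}B\|$, $\alpha=\tfrac{\omega_{\rm NSOR}}{\sigma_{\rm NSOR}}\|B\|$, $\beta=\tfrac{\omega_{\rm NSOR}|1-\omega_{\rm NSOR}|}{\sigma_{\rm NSOR}}\|A\|$, and mirroring the left-multiplication trick that takes \eqref{eq:errors} to \eqref{neq:errnorm} produces an iteration matrix $T$ whose first row is $(a,c)$ and whose second row is $(a\alpha+\beta,\; c\alpha+d)$.

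A short computation yields $\det(\lambda I-T)=\lambda^2-(a+c\alpha+d)\lambda+(ad-c\beta)$ --- the cross-terms $a c\alpha$ cancel --- so \cite[Lemma~2.1]{young1971} gives $\rho(T)<1$ if and only if $|ad-c\beta|<1$ and $c(\alpha+\beta)<(1-a)(1-d)$, which is precisely \eqref{eq:nsor1}. From here the Cauchy-sequence estimate \eqref{ie:cx} applies verbatim to show both $\{x^{(k)}\}$ and $\{y^{(k)}\}$ converge; passing to the limit in \eqref{eq:nsor4gave} identifies the limit of $x^{(k)}$ as a solution of GAVEs~\eqref{eq:gave}. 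The subtlest step, and the one I expect to be the main obstacle, is uniqueness: to close the estimate in the manner of Theorem~\ref{thm:cov} one needs $a<1$ \emph{and} $d<1$ separately, so that $1-a$ and $1-d$ can be treated as strictly positive factors; I would rule out the pathological regime $a>1$, $d>1$ by chaining the two parts of \eqref{eq:nsor1} exactly as in the argument using \eqref{ie:ie1}--\eqref{ie:ie3}, obtaining a sign contradiction and thus reducing the uniqueness contradiction to a single application of \eqref{eq:nsor1}.
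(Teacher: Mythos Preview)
Your approach is correct and is precisely what the paper does implicitly: the matrix $T_{\rm NSOR}$ displayed in Section~\ref{sec:compar} (preceding Theorem~\ref{thm:nsor}) coincides entry-by-entry with the $T$ you derive, confirming that Corollary~\ref{cor:nsor4gave} is obtained by re-running the Theorem~\ref{thm:cov} machinery on the NSOR error recursion with the vector-level substitution $Ae_x^{(k+1)}=(1-\omega_{\rm NSOR})Ae_x^{(k)}+\omega_{\rm NSOR}Be_y^{(k)}$, not by instantiating Theorem~\ref{thm:cov} in GRMS parameters; your observation that the latter would miss the $\omega_{\rm NSOR}^2/\sigma_{\rm NSOR}$ coefficient is right.

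One correction to the framing: defining $e_x^{(k)}=x^{(k)}-x^*$ and subtracting a ``fixed-point identity'' presupposes the existence of $x^*$, which is part of the conclusion. To make the Cauchy estimate~\eqref{ie:cx} legitimate you must work with successive differences $e_x^{(k)}=x^{(k)}-x^{(k-1)}$, exactly as Theorem~\ref{thm:cov} does. The substitution and norm bounds are formally identical under this relabeling --- both recursions produce the same $T$ --- so your computation and the subsequent Young-lemma/uniqueness arguments go through unchanged once this is fixed.
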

When $B=I$, the Corollary \ref{cor:nsor4gave} turns into Corollary \ref{cor:nsor}.
\begin{corollary}\label{cor:nsor}
Let $A\in\mathbb{R}^{n\times n}$ be a given nonsingular matrix and $\theta=\omega_{\rm NSOR}>0,~\sigma_{\rm NSOR}>0$. If
\begin{subequations}
\begin{equation}\label{ie:nsor1}
\Big||1-\omega_{\rm NSOR}|| 1 - \frac{\omega_{\rm NSOR}^2}{\sigma_{\rm NSOR}}| - \frac{\omega_{\rm NSOR}^2|1-\omega_{\rm NSOR}|}{\sigma_{\rm NSOR}}\|A^{-1}\|\|A\|\Big|<1~
\end{equation}
\text{and}
\begin{equation}\label{ie:nsor2}
\frac{\omega_{\rm NSOR}^2}{\sigma_{\rm NSOR}}\|A^{-1}\|\Big(1 + |1-\omega_{\rm NSOR}|\| A\|\Big) <\Big(1-|1-\omega_{\rm NSOR}|\Big)\Big(1-| 1 - \frac{\omega_{\rm NSOR}^2}{\sigma_{\rm NSOR}}|\Big),
\end{equation}
\end{subequations}
 then AVEs \eqref{eq:ave} has a unique solution $x^*$ for any $b\in \mathbb{R}^n$ and the sequence $\{x^k\}_{k=0}^{\infty}$ generated by the NSOR iteration \eqref{eq:nsor} converges to $x^*$.
\end{corollary}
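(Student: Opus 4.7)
The plan is to derive Corollary~\ref{cor:nsor} as an immediate specialization of Corollary~\ref{cor:nsor4gave} to the case $B=I$, using the fact that the latter has already been established for a general nonsingular matrix $B$. Since $I$ is trivially nonsingular, Corollary~\ref{cor:nsor4gave} applies, so the only work is to verify that its hypotheses \eqref{eq:nsor1} collapse to the hypotheses \eqref{ie:nsor1}--\eqref{ie:nsor2} of Corollary~\ref{cor:nsor}, and that the iteration scheme \eqref{eq:nsor4gave} reduces to \eqref{eq:nsor}.

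The key observations are the following norm reductions when $B=I$:
\begin{equation*}
\|B\|=1,\qquad \|A^{-1}B\|=\|A^{-1}\|,\qquad \Big\|I-\frac{\omega_{\rm NSOR}^2}{\sigma_{\rm NSOR}}B\Big\|=\Big|1-\frac{\omega_{\rm NSOR}^2}{\sigma_{\rm NSOR}}\Big|,
\end{equation*}
where the last equality holds because $I-\frac{\omega_{\rm NSOR}^2}{\sigma_{\rm NSOR}}B$ is a scalar multiple of the identity. Substituting these identities termwise into the two inequalities in \eqref{eq:nsor1} of Corollary~\ref{cor:nsor4gave} yields precisely \eqref{ie:nsor1} and \eqref{ie:nsor2}. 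Moreover, with $B=I$, the NSOR iteration \eqref{eq:nsor4gave} for GAVEs reduces verbatim to the NSOR iteration \eqref{eq:nsor} for AVEs.

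Consequently, the conclusion of Corollary~\ref{cor:nsor4gave}---existence and uniqueness of a solution $x^*$ of GAVEs~\eqref{eq:gave} for every $b\in\mathbb{R}^n$ together with convergence of $\{x^{(k)}\}_{k=0}^{\infty}$ to $x^*$---specializes directly to the statement of Corollary~\ref{cor:nsor} for AVEs~\eqref{eq:ave}. There is no substantive obstacle here; the entire proof is bookkeeping of the three scalarizations above, and no further invocation of Theorem~\ref{thm:cov} or of the Cauchy-sequence argument is needed once Corollary~\ref{cor:nsor4gave} is in hand.
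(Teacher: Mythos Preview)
Your proposal is correct and matches the paper's own approach: the paper simply states that Corollary~\ref{cor:nsor} follows from Corollary~\ref{cor:nsor4gave} by taking $B=I$, and your explicit verification of the three norm identities ($\|B\|=1$, $\|A^{-1}B\|=\|A^{-1}\|$, and $\|I-\tfrac{\omega_{\rm NSOR}^2}{\sigma_{\rm NSOR}}B\|=|1-\tfrac{\omega_{\rm NSOR}^2}{\sigma_{\rm NSOR}}|$) is exactly the bookkeeping needed to see this.
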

\begin{remark}{\rm
The convergence conditions \eqref{ie:nsor1} and \eqref{ie:nsor2} are new, which  are not appeared in \cite{doss2020}. In addition, the existence and uniqueness of the solution to AVEs~\eqref{eq:ave} is not explored in \cite{doss2020}.}
\end{remark}

For FPI-SS iteration \cite{lild2022}, the following corollary is obtained from Corollary \ref{cor:con1}.

\begin{corollary}\label{cor:fpiss}
  Assume that $A,B\in \mathbb{R}^{n\times n}$ are given and $\alpha_{\rm FPI\text{-}SS} I+A$ is nonsingular with $\alpha_{\rm FPI\text{-}SS}>0$ and $\theta = \omega_{\rm FPI\text{-}SS}$. Denote $\hat{a}=\Vert (\alpha_{\rm FPI\text{-}SS} I+A)^{-1}(\alpha_{\rm FPI\text{-}SS} I - A)\Vert,~\hat{b}=2\Vert (\alpha_{\rm FPI\text{-}SS} I+A)^{-1}B\Vert.$
If
\begin{equation}\label{eq:fpiss1}
\hat{a}+ \hat{b}<1 \quad \text{and} \quad 0<\omega_{\rm FPI\text{-}SS}<\frac{2(1-\hat{a})}{1-\hat{a}+\hat{b}},
\end{equation}
then GAVEs \eqref{eq:gave} has a unique solution $x^*$ for any $b\in \mathbb{R}^n$ and the sequence $\{(x^{(k)}, y^{(k)})\}_{k=0}^{\infty}$ generated by the FPI-SS iteration \eqref{eq:fpiss} converges to $(x^*,y^*=|x^*|)$ for any given initial vectors $x^{(0)},y^{(0)}\in \mathbb{R}^n$.
\end{corollary}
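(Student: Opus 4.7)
The plan is to derive Corollary \ref{cor:fpiss} as a direct specialization of Corollary \ref{cor:con1}. Since the FPI-SS iteration \eqref{eq:fpiss} was already identified in item (i) of Section \ref{sec:GRMS} as the GRMS iteration with the specific choices $M=\frac12(\alpha_{\rm FPI\text{-}SS}I+A)$, $Q=I$, $Q_1=\frac{\theta}{\omega_{\rm FPI\text{-}SS}}I$, and $H=0$, together with the prescription $\theta=\omega_{\rm FPI\text{-}SS}$, I can simply compute the five quantities in \eqref{eq:nota} and invoke the corollary.

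First I would carry out the parameter identification. Note that $N=M-A=\frac12(\alpha_{\rm FPI\text{-}SS}I-A)$, so
\[
M^{-1}N=(\alpha_{\rm FPI\text{-}SS}I+A)^{-1}(\alpha_{\rm FPI\text{-}SS}I-A), \qquad M^{-1}BQ=2(\alpha_{\rm FPI\text{-}SS}I+A)^{-1}B,
\]
giving $a=\hat{a}$ and $c=\hat{b}$. Since $\theta=\omega_{\rm FPI\text{-}SS}$ we have $Q_1=I$, hence $Q_2=Q_1-Q=0$, which yields $d=\|Q_1^{-1}Q_2\|=0$ and $\alpha=\|Q_1^{-1}\|=1$. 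Finally $H=0$ forces $\beta=\|Q_1^{-1}H\|=0$.

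Next I would verify that the hypotheses \eqref{eq:fpiss1} imply the two inequalities \eqref{neq:con1} and \eqref{neq:con2} of Corollary \ref{cor:con1}. For \eqref{neq:con1}, substituting $(a,c,d,\alpha,\beta)=(\hat{a},\hat{b},0,1,0)$ collapses the three subconditions to $\hat{a}<1$, $0<1$, and $\hat{b}<1-\hat{a}$, all of which follow from $\hat{a}+\hat{b}<1$. For \eqref{neq:con2}, the same substitution yields
\[
\frac{0}{(1-\hat{a})-\hat{b}} \;<\; \omega_{\rm FPI\text{-}SS} \;<\; \frac{2(1-\hat{a})}{(1-\hat{a})+\hat{b}},
\]
which is precisely the upper bound in \eqref{eq:fpiss1}; here $(1-\hat{a})-\hat{b}>0$ from $\hat{a}+\hat{b}<1$ guarantees that the displayed lower bound is well-defined.

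With both conditions of Corollary \ref{cor:con1} verified, that corollary delivers the existence and uniqueness of a solution $x^*$ of GAVEs \eqref{eq:gave} together with convergence of $\{(x^{(k)},y^{(k)})\}_{k=0}^\infty$ generated by \eqref{eq:fpiss} to $(x^*,\,Q^{-1}|x^*|)=(x^*,|x^*|)$, which is exactly the claim of Corollary \ref{cor:fpiss}. There is no real obstacle here — the only care required is the bookkeeping that $\theta=\omega_{\rm FPI\text{-}SS}$ causes $Q_1$ to collapse to $I$, which in turn makes $d=\beta=0$ and produces the clean algebraic form of the hypotheses.
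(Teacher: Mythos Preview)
Your proposal is correct and matches the paper's approach exactly: the paper states only that Corollary~\ref{cor:fpiss} ``is obtained from Corollary~\ref{cor:con1}'' without further detail, and you have supplied precisely the parameter identification and the verification of \eqref{neq:con1}--\eqref{neq:con2} that this reduction requires.
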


\begin{remark}\label{rem:FPISS}{\rm
In \cite[Theorem 1]{lild2022}, under the assumption that GAVEs~\eqref{eq:gave} has a unique solution, the sequence $\{x^{(k)}\}_{k=0}^{\infty}$ generated by the FPI-SS iteration converges to the unique solution $x^*$ of GAVEs~\eqref{eq:gave} if
\begin{equation}\label{eq:fpiss4}
\hat{a}+\hat{b}<1\quad \text{and} \quad 0<\omega_{\rm FPI\text{-}SS}<\dfrac{2}{1+\hat{a}+\hat{b}}.
\end{equation}
It is easy to see that $\dfrac{2}{1+\hat{a}+\hat{b}}<\dfrac{2(1-\hat{a})}{1-\hat{a}+\hat{b}}$,  which implies that \eqref{eq:fpiss1} is weaker
than~\eqref{eq:fpiss4}. Furthermore we can conclude that GAVEs \eqref{eq:gave} has a unique solution under condition~\eqref{eq:fpiss4}.}
\end{remark}

\section{Comparison theorems}\label{sec:compar}
It follows from \eqref{eq:cr} that the smaller value of $\rho(T)$ is (where $T$ is defined as in \eqref{eq:t}), the faster the GRMS
iteration~\eqref{eq:grms} will converge later on. In this section, we will theoretically compare the convergence rate of the GRMS iteration \eqref{eq:grms} with some existing iterations, i.e., the RMS iteration \eqref{eq:rms}, the MGSOR iteration~\eqref{eq:mgsor4gave} and the NSOR iteration \eqref{eq:nsor4gave}. To this end, we introduce the following two lemmas.

\begin{lem}[{\cite[p. 27]{bepl1994}}]\label{lem:irr}
 For $n\geq2$, a nonnegative matrix $U\in\mathbb{R}^{n\times n}$ is irreducible if $(I+U)^{n-1}$ is a positive matrix, i.e. $(I+U)^{n-1}>0$.
\end{lem}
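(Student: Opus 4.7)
The plan is to derive irreducibility from the combinatorial interpretation of positive entries in powers of $U$. I would first recall that, for a nonnegative matrix $U$, irreducibility is equivalent to strong connectedness of the directed graph $G(U)$ whose arcs are the pairs $(i,j)$ with $U_{ij}>0$; moreover, $(U^k)_{ij}>0$ is equivalent to the existence of a directed walk of length $k$ from $i$ to $j$ in $G(U)$, since the $(i,j)$-entry of $U^k$ is a sum of nonnegative products indexed by such walks.

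With that in hand, the first step is to expand
$$(I+U)^{n-1}=\sum_{k=0}^{n-1}\binom{n-1}{k}U^{k}.$$
Every summand is a nonnegative matrix scaled by a positive binomial coefficient, so $(I+U)^{n-1}_{ij}>0$ if and only if $(U^{k})_{ij}>0$ for at least one index $k\in\{0,1,\ldots,n-1\}$.

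Next, I would fix any pair $i\neq j$. The hypothesis $(I+U)^{n-1}>0$ forces $(I+U)^{n-1}_{ij}>0$, hence some $k\in\{0,1,\ldots,n-1\}$ satisfies $(U^{k})_{ij}>0$; since $(U^{0})_{ij}=I_{ij}=0$ when $i\neq j$, such a $k$ must actually lie in $\{1,\ldots,n-1\}$. This yields a directed walk from $i$ to $j$ in $G(U)$. As $i$ and $j$ were arbitrary (with $i\neq j$), the graph $G(U)$ is strongly connected, which is exactly irreducibility of $U$.

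The argument is entirely routine and I anticipate no real obstacle. The only point requiring any care is the bookkeeping that identifies positivity of $(U^{k})_{ij}$ with the existence of a directed walk of length $k$, and this is an immediate consequence of the matrix-multiplication formula together with $U\geq 0$; everything else is binomial expansion followed by interpretation.
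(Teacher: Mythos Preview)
Your argument is correct and is essentially the standard textbook proof. Note, however, that the paper does not supply its own proof of this lemma: it is stated with a citation to \cite[p.~27]{bepl1994} and used as a black box, so there is nothing to compare against beyond observing that your binomial-expansion/graph-connectivity argument is precisely the classical one given in Berman--Plemmons.
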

\begin{lem}[{\cite[p. 27]{bepl1994}}]\label{lem:rho}
 Let $U,R\in\mathbb{R}^{n\times n}$. If $R+U$ is an irreducible matrix, $U\neq R$ and $U\geq R\geq0$, $\rho(U)>\rho(R)$.
\end{lem}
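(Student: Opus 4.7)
The plan is to argue by contradiction using Perron--Frobenius theory. Since $0\le R\le U$, the standard monotonicity of the spectral radius for nonnegative matrices already yields $\rho(R)\le\rho(U)$, so the real work is to upgrade this into a strict inequality by using both the hypothesis $U\neq R$ and the irreducibility of $R+U$.

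Suppose for contradiction that $\rho(U)=\rho(R)=:\lambda$. I would introduce two carefully paired Perron vectors: applying Perron--Frobenius to the nonnegative matrix $R$ produces $x\ge 0$, $x\neq 0$, with $Rx=\lambda x$; applying it to $(R+U)^{T}$, which is irreducible by hypothesis, yields a strictly positive left eigenvector $w>0$ with $w^{T}(R+U)=\rho(R+U)\,w^{T}$. From $U\ge R$ we have $Ux\ge Rx=\lambda x$, so $(R+U)x\ge 2\lambda x$. Left-multiplying by $w^{T}>0$ and using $w^{T}x>0$ yields $\rho(R+U)\ge 2\lambda$. On the other hand, monotonicity applied to $R+U\le 2U$ gives $\rho(R+U)\le 2\lambda$, so $\rho(R+U)=2\lambda$, and the equality case of the preceding inequality forces $w^{T}(Ux-\lambda x)=0$; since $w>0$ and $Ux-\lambda x\ge 0$ componentwise, we conclude $Ux=\lambda x$, and hence $(U-R)x=0$.

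The hard part will be turning $(U-R)x=0$ into a contradiction, because $x$ need not be strictly positive when $R$ itself is reducible. I would split on the support $S=\{i:x_{i}>0\}$. If $S=\{1,\dots,n\}$, i.e.\ $x>0$, then $(U-R)x=0$ combined with $U-R\ge 0$ forces every column of $U-R$ to vanish, giving $U=R$ and contradicting $U\neq R$. Otherwise $\emptyset\neq S\subsetneq\{1,\dots,n\}$; reading off the $i$-th components of $Rx=\lambda x$ and $Ux=\lambda x$ for indices $i\notin S$ yields $\sum_{j\in S}R_{ij}x_{j}=\sum_{j\in S}U_{ij}x_{j}=0$, and since $R,U\ge 0$ with $x_{j}>0$ for $j\in S$, this forces $R_{ij}=U_{ij}=0$ whenever $i\notin S$ and $j\in S$. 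Thus $R+U$ has a zero block in the $(\{1,\dots,n\}\setminus S,\,S)$ position, placing it into a nontrivial block-triangular form after a simultaneous row--column permutation and contradicting the irreducibility of $R+U$. In either case we reach a contradiction, so $\rho(U)>\rho(R)$.
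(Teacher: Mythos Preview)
The paper does not prove this lemma; it is cited from Berman and Plemmons without argument, so there is no ``paper's proof'' to compare against. Your proof is correct as written.

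One simplification worth noting: under the standing hypotheses $0\le R\le U$ you have $U\le R+U\le 2U$ entrywise, so $U$ and $R+U$ share exactly the same zero pattern, and hence irreducibility of $R+U$ is equivalent to irreducibility of $U$ itself. Exploiting this, you may take $w>0$ to be the left Perron vector of $U$ directly (so $w^{T}U=\lambda w^{T}$) and pair it with the right Perron vector $x\ge 0$ of $R$; then $w^{T}(U-R)x=w^{T}Ux-w^{T}Rx=\lambda w^{T}x-\lambda w^{T}x=0$ immediately, bypassing the detour through $\rho(R+U)=2\lambda$. From there your case split on the support of $x$ goes through verbatim, with the reducibility contradiction landing on $U$ (equivalently on $R+U$). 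Either route is fine; yours is just slightly longer.
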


\subsection{GRMS vs. RMS}
Let
$$
T_{\rm RMS}= \begin{bmatrix}
        \|M^{-1}_{\rm RMS}N_{\rm RMS}\| & \|M^{-1}_{\rm RMS}B\| \\
        \tau_{\rm RMS}\|M^{-1}_{\rm RMS}N_{\rm RMS}\| & |1-\tau_{\rm RMS}| + \tau_{\rm RMS} \|M^{-1}_{\rm RMS}B\| \\
      \end{bmatrix},
$$
which is derived from \eqref{eq:t} with $M=M_{\rm RMS}$, $Q=I$, $Q_1=\frac{\theta}{\tau_{\rm RMS}}I$, $H=0$ and $\theta=\tau_{\rm RMS}$.
Similar to the proof of Theorem \ref{thm:cov}, the smaller value of $\rho(T_{\rm RMS})$ is, the faster the RMS iteration~\eqref{eq:rms}
will converge later on. For GRMS and RMS, we have the following comparison theorem.

\begin{theorem}\label{thm:com}
 If $A,B\in\mathbb{R}^{n\times n}$ with $A=M-N$ such that $M$ is nonsingular and $M_{\rm RMS}=M$. $Q=Q_1-Q_2\in\mathbb{R}^{n\times n}$ and $Q_1$ is nonsingular. $\tau_{\rm RMS}>0$ and $\theta>0$. Assume that notation \eqref{eq:nota} and \cite[Lemma~3]{soso2023} hold, and $T$ is an irreducible matrix. If
\begin{subequations}\label{eqeq:conv}
\begin{equation}\label{eq:conv1}
    \|Q\|\leq1,~\beta c<|1 - \tau_{\rm RMS}| + \tau_{\rm RMS} \|M^{-1}B\| + 1,~\beta(a+1)<\tau_{\rm RMS} a, ~\alpha c + d <1,
\end{equation}
\begin{equation}
(1 - d)[\tau_{\rm RMS} a - \beta(a+1)]>\alpha c(\tau_{\rm RMS} a - \beta) + a\alpha[1-|1-\tau_{\rm RMS}| - \tau_{\rm RMS} \|M^{-1}B\|]
\end{equation}
\text{and}
\begin{equation}\label{eq:conv2}\footnotesize
\dfrac{1 + \beta c - (|1 - \tau_{\rm RMS}| + \tau_{\rm RMS} \|M^{-1}B\|)}{1 - (\alpha c + d)}<\theta<\min\left\{\dfrac{|1 - \tau_{\rm RMS}| + \tau_{\rm RMS}\|M^{-1}B\| + 1 - \beta c}{1 + \alpha c + d},~\frac{\tau_{\rm RMS} a - (a+1)\beta}{a\alpha}\right\},
\end{equation}
\end{subequations}
then $\rho(T)<\rho(T_{\rm RMS})<1$.
\end{theorem}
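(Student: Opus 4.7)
The plan is to apply Lemma \ref{lem:rho} to the pair $(T, T_{\rm RMS})$: it suffices to establish that $T_{\rm RMS} \ge T \ge 0$ entrywise with $T \ne T_{\rm RMS}$ and that $T + T_{\rm RMS}$ is irreducible, which will yield $\rho(T) < \rho(T_{\rm RMS})$. The assumption that \cite[Lemma~3]{soso2023} holds supplies $\rho(T_{\rm RMS}) < 1$, and the two inequalities combine to give the claim. Since $M = M_{\rm RMS}$ forces $\|M_{\rm RMS}^{-1}N_{\rm RMS}\| = a$, I would first rewrite $T_{\rm RMS}$ using $a$ and the abbreviation $\tilde c := \|M^{-1}B\|$, so that $T$ and $T_{\rm RMS}$ become directly comparable entry by entry.

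The core computation is the four entrywise inequalities. The $(1,1)$ entries coincide, and for the $(1,2)$ entries I would use submultiplicativity together with $\|Q\| \le 1$ from \eqref{eq:conv1} to obtain $c = \|M^{-1}BQ\| \le \|M^{-1}B\|\,\|Q\| \le \tilde c$. For the $(2,1)$ comparison, rearranging $a(\theta\alpha + \beta) + \beta \le \tau_{\rm RMS}a$ gives exactly $\theta \le \frac{\tau_{\rm RMS}a - (a+1)\beta}{a\alpha}$; the positivity of this bound is guaranteed by $\beta(a+1) < \tau_{\rm RMS}a$ in \eqref{eq:conv1}, and the strict inequality required by \eqref{eq:conv2} makes the $(2,1)$ entry of $T$ strictly smaller, which incidentally certifies $T \ne T_{\rm RMS}$. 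The $(2,2)$ comparison is the main obstacle and requires a case split on the sign of $1-\theta$: when $\theta \le 1$ the inequality reduces, using $1 - \alpha c - d > 0$ from \eqref{eq:conv1}, to $\theta \ge \frac{1 + \beta c - (|1-\tau_{\rm RMS}| + \tau_{\rm RMS}\tilde c)}{1 - (\alpha c + d)}$, i.e.\ the lower bound of \eqref{eq:conv2}; when $\theta > 1$ it reduces to $\theta \le \frac{|1-\tau_{\rm RMS}| + \tau_{\rm RMS}\tilde c + 1 - \beta c}{1 + \alpha c + d}$, i.e.\ the first argument of the minimum in \eqref{eq:conv2}. In either regime the hypothesis on $\theta$ delivers what is needed.

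The remaining technical ingredient is the housekeeping inequality $(1-d)[\tau_{\rm RMS}a - \beta(a+1)] > \alpha c(\tau_{\rm RMS}a - \beta) + a\alpha[1 - |1-\tau_{\rm RMS}| - \tau_{\rm RMS}\|M^{-1}B\|]$, which I would show (by multiplying out and regrouping, using $\tau_{\rm RMS}a - \beta = \tau_{\rm RMS}a - \beta(a+1) + a\beta$) is equivalent to $L < U_2$, where $L$ is the lower bound in \eqref{eq:conv2} and $U_2 = \frac{\tau_{\rm RMS}a - (a+1)\beta}{a\alpha}$; it therefore certifies that the admissible interval for $\theta$ is nonempty. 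Irreducibility of $T + T_{\rm RMS}$ is immediate: $T$ is irreducible by hypothesis and $T_{\rm RMS} \ge 0$, so adding $T_{\rm RMS}$ can only add edges to the directed graph associated with $T$, preserving strong connectivity. With all hypotheses of Lemma \ref{lem:rho} verified, it gives $\rho(T) < \rho(T_{\rm RMS})$, and combining with $\rho(T_{\rm RMS}) < 1$ from \cite[Lemma~3]{soso2023} completes the proof. The main difficulty will be bookkeeping in the $(2,2)$ case analysis and in reorganizing the housekeeping inequality; everything else is a direct verification.
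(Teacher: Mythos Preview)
Your proposal is correct and follows essentially the same route as the paper's proof: verify $0 \le T \le T_{\rm RMS}$ with $T \ne T_{\rm RMS}$ entrywise, invoke Lemma~\ref{lem:rho} (with irreducibility of $T+T_{\rm RMS}$ inherited from that of $T$), and use \cite[Lemma~3]{soso2023} for $\rho(T_{\rm RMS})<1$. The paper's own argument is a one-line sketch, so your detailed entry-by-entry verification and the case split on the sign of $1-\theta$ supply precisely the computations the paper omits.
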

\begin{proof}
According to Lemma \ref{lem:rho}, condition \eqref{eqeq:conv} and the assumption that \cite[Lemma~3]{soso2023} hold, we know that $0\leq T\leq T_{\rm RMS}$ and $T\neq T_{\rm RMS}$, which implied that $\rho(T)<\rho(T_{\rm RMS})<1$ since $T$ is an irreducible matrix.
\end{proof}
The following example is used to verify Theorem \ref{thm:com}.

\begin{example}\label{ex:thm}
Consider GAVEs~\eqref{eq:gave} with $A=M-N$, $B = \begin{bmatrix}2 & -1.5 & 1.5 & 0 \\-1.5 & 2 & 0 & 1.5 \\1.5 & 0 & 2 & -1.5 \\0 & 1.5 & -1.5 & 2 \\\end{bmatrix} $ and $b = Ax^*-B|x^*|$ where $x^* = [-\frac{1}{2},0,-\frac{1}{2},0]^\top$.
\begin{enumerate}[(i)]
  \item For GRMS iteration: $M = \begin{bmatrix}8 & -1.5 & 1.5 & 0 \\-1.5 & 8 & 0 & 1.5 \\1.5 & 0 & 8 & -1.5 \\0 & 1.5 & -1.5 & 8 \\ \end{bmatrix}$, $N =-\dfrac{1}{8}I$, $Q = Q_1 - Q_2=0.98I$ with $Q_1=0.97I$, $H = 0.01I$ and $\theta = 0.66$. Then $T \approx \begin{bmatrix}0.0250 & 0.4455 \\0.0276 & 0.6545 \end{bmatrix}$ is an irreducible matrix and $\rho(T)\approx 0.6734$;
  \item For RMS iteration: $M_{\rm RMS}=M$, $N_{\rm RMS}=N$ and $\tau_{\rm RMS}=1.21$. Then $T_{\rm RMS}\approx\begin{bmatrix}0.0250 & 0.4545\\0.0302 & 0.7600 \end{bmatrix}$ and $\rho(T_{\rm RMS})\approx0.7783$.
\end{enumerate}

 It's easy to verify that the conditions of Theorem \ref{thm:com} hold and $\rho(T)<\rho(T_{\rm RMS})<1$. By setting initial vector $x^{(0)} = y^{(0)} = [0,0,0,0]^\top$, we can see that the IT {\rm (}the number of iteration{\rm)} of GRMS is less than that of RMS from Figure \ref{fig:rms}.
\end{example}

\begin{figure}[htp]
  \centering
  \includegraphics[width=0.9\textwidth]{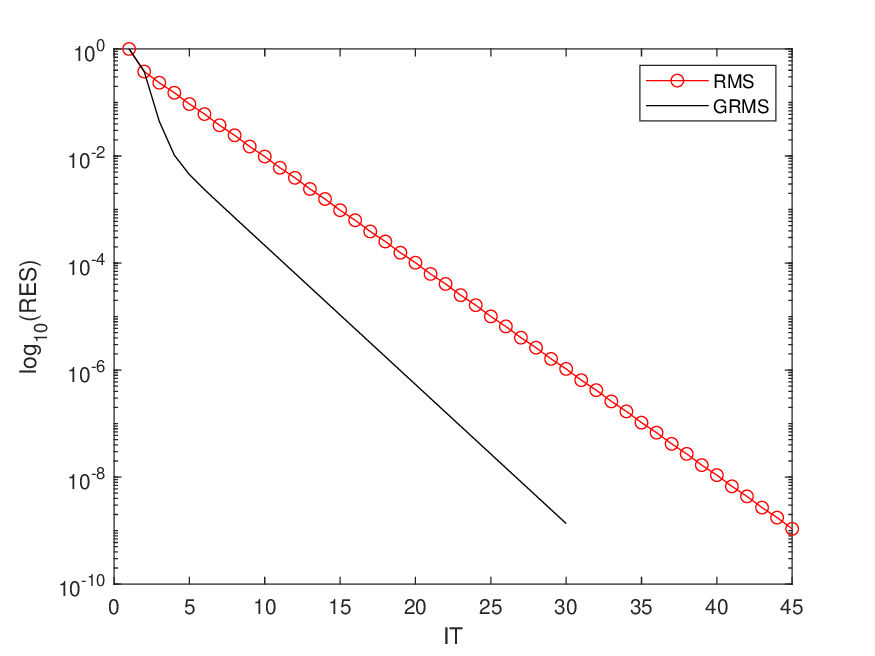}\\
  \caption{The curve of RES:=$\frac{\|Ax^{(k)} - B|x^{(k)}| -b\|_2}{\|b\|_2}$ and IT (the number of iteration) between GRMS and RMS iteration. The methods are terminated when ${\rm RES} \leq 10^{-9}$.}\label{fig:rms}
\end{figure}

\subsection{ GRMS vs. MGSOR}
Let
\begin{equation*}\footnotesize
T_{\rm MGSOR} =
  \begin{bmatrix}
    |1-\alpha_{\rm MGSOR}| & \alpha_{\rm MGSOR}\|A^{-1}BQ_{\rm MGSOR}\| \\
    \beta_{\rm MGSOR}|1-\alpha_{\rm MGSOR}|\|Q^{-1}_{\rm MGSOR}\| & \alpha_{\rm MGSOR}\beta_{\rm MGSOR}\|A^{-1}BQ_{\rm MGSOR}\|\|Q_{\rm MGSOR}^{-1}\|+|1-\beta_{\rm MGSOR}| \\
  \end{bmatrix},
\end{equation*}
which is derived from \eqref{eq:t} with $M=\frac{1}{\alpha_{\rm MGSOR}}A$, $Q=Q_{\rm MGSOR}, Q_1=\frac{\theta}{\beta_{\rm MGSOR}}Q_{\rm MGSOR}$, $H=0$ and $\theta=\beta_{\rm MGSOR}$. Similar to the proof of Theorem \ref{thm:cov}, the smaller value of $\rho(T_{\rm MGSOR})$ is, the faster the MGSOR
iteration~\eqref{eq:mgsor} will converge later on. For GRMS and MGSOR, we have the following comparison theorem.

\begin{theorem}\label{thm:mgsor}
Let $A,B\in\mathbb{R}^{n\times n}$ with $A=M-N$ such that $M$ is nonsingular. $Q_{\rm MGSOR}\in\mathbb{R}^{n\times n}$, $Q=Q_1-Q_2\in\mathbb{R}^{n\times n}$ and $Q_1$ is nonsingular. $\alpha_{\rm MGSOR}, \beta_{\rm MGSOR}$ and $\theta>0$. Assume that notation \eqref{eq:nota} and condition \eqref{ie:mgsor1} hold, and $T$ is an irreducible matrix.

If
\begin{subequations}\label{eq:covmgsor}
\begin{equation}
    a<|1- \alpha_{\rm MGSOR}|,~c<\alpha_{\rm MGSOR}\|A^{-1}BQ_{\rm MGSOR}\|,~\alpha c + d <1
\end{equation}
\text{and}
\begin{align}
&\beta \big((a+1)(1-d) - \alpha c\big)-\beta_{\rm MGSOR}|1-\alpha_{\rm MGSOR}|\|Q^{-1}_{\rm MGSOR}\|(1-\alpha c-d)<  \\
&a\alpha\left[|1-\beta_{\rm MGSOR}|+\alpha_{\rm MGSOR}\beta_{\rm MGSOR}\|A^{-1}BQ_{\rm MGSOR}\|\|Q^{-1}_{\rm MGSOR}\|-1\right]
\end{align}
\text{such that}
\begin{equation}
\tilde{\mu}<\theta <\min\left\{\tilde{\nu},~\Delta\right\},
\end{equation}
\end{subequations}
where $$\tilde{\mu} = \frac{1 + \beta c - \left(|1 - \beta_{\rm MGSOR}| + \alpha_{\rm MGSOR}\beta_{\rm MGSOR}\|A^{-1}BQ_{\rm MGSOR}\|\|Q^{-1}_{\rm MGSOR}\|\right)}{ 1-(\alpha c + d)},$$~$$\tilde{\nu} = \frac{1 - \beta c + |1 - \beta_{\rm MGSOR}| + \alpha_{\rm MGSOR}\beta_{\rm MGSOR}\|A^{-1}BQ_{\rm MGSOR}\|\|Q^{-1}_{\rm MGSOR}\|}{1 + \alpha c + d}$$ and $$\Delta=\frac{\beta_{\rm MGSOR}|1-\alpha_{\rm MGSOR}|\|Q^{-1}_{\rm MGSOR}\| - (a+1)\beta}{a\alpha},$$
then $\rho(T)<\rho(T_{\rm MGSOR})<1$.
\end{theorem}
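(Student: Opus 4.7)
The plan is to mirror the strategy used in Theorem \ref{thm:com}: verify that the iteration matrix $T$ of GRMS is dominated entrywise by $T_{\rm MGSOR}$, conclude that the two matrices differ, and then apply the comparison principle in Lemma \ref{lem:rho} together with the assumed irreducibility of $T$. Concretely, the inequality \eqref{ie:mgsor1} (inherited from Corollary \ref{cor:exmgsor}) already yields $\rho(T_{\rm MGSOR})<1$, so what remains to prove is $\rho(T)<\rho(T_{\rm MGSOR})$, and this is obtained from $0\le T\le T_{\rm MGSOR}$ with $T\ne T_{\rm MGSOR}$.

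First I would establish the four entrywise comparisons between $T$ (given by \eqref{eq:t}) and $T_{\rm MGSOR}$. The $(1,1)$ entry is $T_{11}=a<|1-\alpha_{\rm MGSOR}|=T_{\rm MGSOR,11}$ by the first hypothesis in \eqref{eq:covmgsor}, and the $(1,2)$ entry $T_{12}=c<\alpha_{\rm MGSOR}\|A^{-1}BQ_{\rm MGSOR}\|=T_{\rm MGSOR,12}$ by the second. For the $(2,1)$ entry one rearranges the upper bound $\theta<\Delta$: multiplying through by $a\alpha>0$ gives exactly
\[
a(\theta\alpha+\beta)+\beta<\beta_{\rm MGSOR}|1-\alpha_{\rm MGSOR}|\|Q^{-1}_{\rm MGSOR}\|,
\]
which is $T_{21}<T_{\rm MGSOR,21}$. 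For the $(2,2)$ entry one splits on the sign of $1-\theta$: if $\theta\le 1$ the inequality $T_{22}\le T_{\rm MGSOR,22}$ comes from $\tilde{\mu}<\theta$ (clearing the positive denominator $1-(\alpha c+d)$ thanks to $\alpha c+d<1$), while if $\theta>1$ it comes from $\theta<\tilde{\nu}$ (clearing $1+\alpha c+d>0$). In either case one collects the terms $c(\theta\alpha+\beta)+\theta d+|1-\theta|$ on the left and matches them against $\alpha_{\rm MGSOR}\beta_{\rm MGSOR}\|A^{-1}BQ_{\rm MGSOR}\|\|Q^{-1}_{\rm MGSOR}\|+|1-\beta_{\rm MGSOR}|$ on the right.

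Next I would observe that because $T_{\rm MGSOR}\ge 0$ and $T$ is irreducible by hypothesis, $T+T_{\rm MGSOR}$ is irreducible as well (the zero pattern of $T+T_{\rm MGSOR}$ is contained in that of $T$). Together with $0\le T\le T_{\rm MGSOR}$ and the strict inequality in at least one coordinate (guaranteeing $T\ne T_{\rm MGSOR}$), Lemma \ref{lem:rho} applies with $U=T_{\rm MGSOR}$, $R=T$ and yields $\rho(T)<\rho(T_{\rm MGSOR})$; combined with $\rho(T_{\rm MGSOR})<1$ from \eqref{ie:mgsor1}, this gives the chain $\rho(T)<\rho(T_{\rm MGSOR})<1$.

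The main technical obstacle I anticipate is the $(2,2)$ comparison, because the absolute value $|1-\theta|$ forces a case analysis and the two bounds $\tilde{\mu}$ and $\tilde{\nu}$ must be reconciled with the side condition on $(a+1)\beta$, $\alpha c$ and $d$ encoded in \eqref{eq:covmgsor}; these conditions are precisely what guarantee that the window $\bigl(\tilde{\mu},\min\{\tilde{\nu},\Delta\}\bigr)$ is nonempty, so before applying Lemma \ref{lem:rho} I would first verify the consistency of all four inequalities (for instance, that $\Delta>0$ under $\beta_{\rm MGSOR}|1-\alpha_{\rm MGSOR}|\|Q^{-1}_{\rm MGSOR}\|>(a+1)\beta$, and that $\tilde{\mu}<\tilde{\nu}$). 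Once that interval is nonempty and $\theta$ is chosen inside it, the four entrywise inequalities follow by direct algebra and the rest of the argument is routine.
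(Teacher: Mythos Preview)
Your proposal is correct and follows exactly the same approach as the paper's proof. The paper condenses the argument into a single sentence (``condition \eqref{eq:covmgsor} and the assumption that \eqref{ie:mgsor1} hold, we know that $0\le T<T_{\rm MGSOR}$, which implied that $\rho(T)<\rho(T_{\rm MGSOR})<1$ since $T$ is an irreducible matrix''), whereas you spell out the four entrywise comparisons and the case split on $|1-\theta|$ explicitly---details that the paper leaves to the reader but which match your algebra precisely.
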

\begin{proof}
According to Lemma \ref{lem:rho}, condition \eqref{eq:covmgsor} and the assumption that  \eqref{ie:mgsor1} hold, we know that $0\leq T< T_{\rm MGSOR}$, which implied that $\rho(T)<\rho(T_{\rm MGSOR})<1$ since $T$ is an irreducible matrix.
\end{proof}

The following Example \ref{ex:mgsor} is used to verify Theorem \ref{thm:mgsor}.
\begin{example}\label{ex:mgsor}
Consider AVEs~\eqref{eq:ave} with $A=M-N$ and $b = Ax^*-|x^*|$ with $x^* = [\frac{1}{2},0,\frac{1}{2},0]^\top$.

\begin{enumerate}[(i)]
  \item For GRMS iteration: $M = \begin{bmatrix}24 & -1 & -1 & 0 \\-1 & 24 & 0 & -1 \\-1 & 0 & 24 & -1 \\0 & -1 & -1 & 24 \\\end{bmatrix}$, $N = \dfrac{1}{4}I$, $Q =0.982I$, $Q_1=0.981I$, $H = 0.01I$ and $\theta = 1$. Then $T \approx \begin{bmatrix}0.0114 & 0.0446 \\0.0219 & 0.0470 \end{bmatrix}$ is an irreducible matrix and $\rho(T)\approx 0.0651$;
  \item For MGSOR iteration: $Q_{\rm MGSOR}=Q$ and $\alpha_{\rm MGSOR} = 1.05,~\beta_{\rm MGSOR} = 1.07$. Then $T_{\rm MGSOR} \approx \begin{bmatrix}0.0500 & 0.0474 \\0.0545 & 0.1217 \end{bmatrix}$ and $\rho(T_{\rm MGSOR})\approx0.1480$.
\end{enumerate}

 We can verify that the conditions of Theorem~\ref{thm:mgsor} are satisfied and we have $\rho(T) <\rho(T_{\rm MGSOR})<1$. Set initial vectors $x^{(0)} = y^{(0)} = [0,0,0,0]^\top$. Numerical results for Example~\ref{ex:mgsor} are reported in Figure~\ref{fig:mgsor}, from which we can conclude that the number of iteration for the GRMS method is less than that of the MGSOR method.

\begin{figure}[htp]
  \centering
  \includegraphics[width=4.5in]{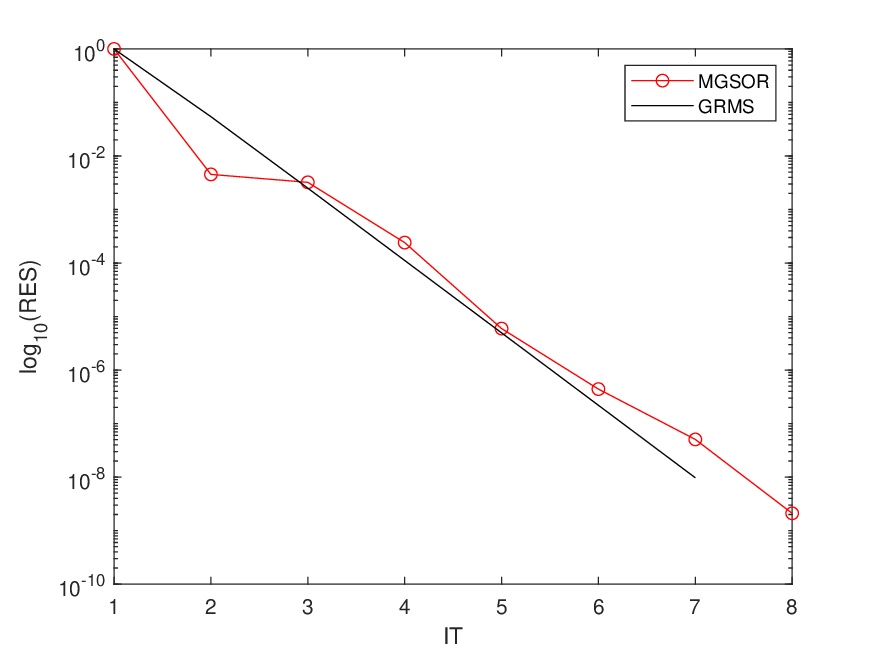}\\
  \caption{The curve of RES:=$\frac{\|Ax^{(k)} - |x^{(k)}| -b\|_2}{\|b\|_2}$ for Example~\ref{ex:mgsor}. The methods are terminated when ${\rm RES} \leq 10^{-9}$. }\label{fig:mgsor}
\end{figure}
\end{example}

\subsection{GRMS vs. NSOR}
For NSOR, similar to the proof of Theorem \ref{thm:cov}, the smaller value of $\rho(T_{\rm NSOR})$ is, the faster the NSOR iteration~\eqref{eq:nsor4gave} will converge later on, where
$$
T_{\rm NSOR}=
  \begin{bmatrix}
    |1-\omega_{\rm NSOR}| & \omega_{\rm NSOR}\|A^{-1}B\| \\
    \frac{\omega_{\rm NSOR}|1-\omega_{\rm NSOR}|}{\sigma_{\rm NSOR}}(\|A\|+\|B\|) & \frac{\omega_{\rm NSOR}^2}{\sigma_{\rm NSOR}}\|A^{-1}B\|\|B\|+\|I-\frac{\omega_{\rm NSOR}^2}{\sigma_{\rm NSOR}}B\| \\
  \end{bmatrix}.
$$
For GRMS and NSOR, we have the following comparison theorem.

\begin{theorem}\label{thm:nsor}
Let $A,B\in\mathbb{R}^{n\times n}$ with $A=M-N$ such that $M$ is nonsingular. $Q=Q_1-Q_2\in\mathbb{R}^{n\times n}$ and $Q_1$ is nonsingular. $\sigma_{\rm NSOR}$, $\omega_{\rm NSOR}>0$ and $\theta>0$. Let notation \eqref{eq:nota} and condition \eqref{eq:nsor1} hold, and $T$ is an irreducible matrix.

If
\begin{subequations}\label{conv:nsor12}
\begin{equation}
    a<|1- \omega_{\rm NSOR}|,~c<\omega_{\rm NSOR}\|A^{-1}B\|,~\alpha c + d <1
\end{equation}
\text{and}
\begin{align}\small
&\left[\frac{\omega_{\rm NSOR}|1-\omega_{\rm NSOR}|}{\sigma_{\rm NSOR}}\left(\|A\|+\|B\|\right)\right](\alpha c+d-1)-\beta\big[(\alpha+1)(d-1)+\alpha c\big]\\
&<a\alpha\left[\frac{\omega^2_{\rm NSOR}}{\sigma_{\rm NSOR}}\|A^{-1}B\|\|B\|+\|I-\frac{\omega^2_{\rm NSOR}}{\sigma_{\rm NSOR}}B\|-1\right]
\end{align}
\text{such that}
\begin{equation}
\hat{\mu}<\theta <\min\left\{\hat{\nu},~\Delta\right\},
\end{equation}
\end{subequations}
where $$\hat{\mu} = \frac{\frac{\omega^2_{\rm NSOR}}{\sigma_{\rm NSOR}}\|A^{-1}B\|\|B\|+\|I-\frac{\omega^2_{\rm NSOR}}{\sigma_{\rm NSOR}}B\|-\beta c-1}{\alpha c + d-1},$$~$$\hat{\nu} = \frac{\frac{\omega^2_{\rm NSOR}}{\sigma_{\rm NSOR}}\|A^{-1}B\|\|B\|+\|I-\frac{\omega^2_{\rm NSOR}}{\sigma_{\rm NSOR}}B\|-\beta c+1}{1 + \alpha c + d}$$ and $$\Delta=\frac{\frac{\omega_{\rm NSOR}|1-\omega_{\rm NSOR}|}{\sigma_{\rm NSOR}}\big(\|A\|+\|B\|\big) - \big(a+1\big)\beta}{a\alpha},$$
then $\rho(T)<\rho(T_{\rm NSOR})<1$.
\end{theorem}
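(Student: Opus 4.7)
The plan is to mirror the strategy used for Theorems \ref{thm:com} and \ref{thm:mgsor}, namely to apply Lemma \ref{lem:rho} to the pair $(T, T_{\rm NSOR})$. The four things I need to verify are: (i) both matrices are entrywise nonnegative, (ii) $T \leq T_{\rm NSOR}$ entrywise, (iii) $T \neq T_{\rm NSOR}$, and (iv) $T + T_{\rm NSOR}$ is irreducible. Item (iv) is immediate from the hypothesis that $T$ itself is irreducible together with the nonnegativity of $T_{\rm NSOR}$, since for a $2\times 2$ nonnegative matrix irreducibility is equivalent to both off-diagonal entries being positive, and this property is preserved under addition of a nonnegative matrix.

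The comparison of the two rows of $T$ and $T_{\rm NSOR}$ splits into four entrywise inequalities. The first two, on entries $(1,1)$ and $(1,2)$, are the assumptions $a < |1 - \omega_{\rm NSOR}|$ and $c < \omega_{\rm NSOR}\|A^{-1}B\|$, which already give (iii) with strict inequality, hence $T \neq T_{\rm NSOR}$. For the $(2,1)$ entry I need $a(\theta\alpha + \beta) + \beta \leq \tfrac{\omega_{\rm NSOR}|1-\omega_{\rm NSOR}|}{\sigma_{\rm NSOR}}(\|A\|+\|B\|)$, which rearranges to the upper bound $\theta \leq \Delta$ appearing in the statement. For the $(2,2)$ entry I need
\begin{equation*}
c(\theta\alpha + \beta) + \theta d + |1-\theta| \leq \tfrac{\omega_{\rm NSOR}^2}{\sigma_{\rm NSOR}}\|A^{-1}B\|\|B\| + \|I - \tfrac{\omega_{\rm NSOR}^2}{\sigma_{\rm NSOR}}B\|.
\end{equation*}
Here $|1-\theta|$ forces a case split: the case $\theta \leq 1$ produces the lower bound $\hat{\mu}$ (using that $\alpha c + d - 1 < 0$, so dividing flips the inequality), while the case $\theta \geq 1$ produces the upper bound $\hat{\nu}$. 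Taking the intersection of both cases with $\theta \leq \Delta$ gives exactly the two-sided restriction $\hat{\mu} < \theta < \min\{\hat{\nu}, \Delta\}$ from the hypotheses.

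The main technical obstacle is the compatibility step: one must confirm the window for $\theta$ is nonempty, i.e.\ that $\hat{\mu} < \Delta$ (the other pair $\hat{\mu}<\hat\nu$ comes cheaply from $\alpha c + d < 1$). Writing out $\Delta > \hat{\mu}$ with the signed denominators $a\alpha > 0$ and $\alpha c + d - 1 < 0$ and clearing fractions, I expect to recover exactly the displayed compound inequality
\begin{equation*}
\left[\tfrac{\omega_{\rm NSOR}|1-\omega_{\rm NSOR}|}{\sigma_{\rm NSOR}}(\|A\|+\|B\|)\right](\alpha c + d -1) - \beta[(a+1)(d-1)+\alpha c] < a\alpha[K-1]
\end{equation*}
with $K$ denoting the $(2,2)$ entry of $T_{\rm NSOR}$; this is the analogue of the compatibility inequality used in Theorem \ref{thm:mgsor} and is precisely the middle condition listed in \eqref{conv:nsor12}.

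With (i)--(iv) established, Lemma \ref{lem:rho} yields $\rho(T) < \rho(T_{\rm NSOR})$. The remaining bound $\rho(T_{\rm NSOR}) < 1$ is not re-proved here; it follows from the assumed NSOR convergence conditions \eqref{eq:nsor1} via Corollary \ref{cor:nsor4gave} (applied through the same Young-type spectral radius argument used at the end of the proof of Theorem \ref{thm:cov}). Chaining these two inequalities gives $\rho(T) < \rho(T_{\rm NSOR}) < 1$, completing the proof.
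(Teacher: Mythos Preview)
Your proposal is correct and follows essentially the same approach as the paper: apply Lemma~\ref{lem:rho} after checking that the hypotheses \eqref{conv:nsor12} encode the entrywise inequality $0\le T\le T_{\rm NSOR}$ with $T\ne T_{\rm NSOR}$, and invoke \eqref{eq:nsor1} for $\rho(T_{\rm NSOR})<1$. The paper's proof is a terse three lines; you have simply unpacked the entrywise comparison in detail, and in doing so your computation of the compatibility condition $\hat\mu<\Delta$ yields the coefficient $(a+1)$ rather than the $(\alpha+1)$ printed in the statement, which appears to be a typo in the paper (compare the analogous term in Theorem~\ref{thm:mgsor}).
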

\begin{proof}
According to Lemma \ref{lem:rho}, condition \eqref{conv:nsor12} and the assumption that \eqref{eq:nsor1} hold, we know that $0\leq T< T_{\rm NSOR}$, which implied that $\rho(T)<\rho(T_{\rm NSOR})<1$ since $T$ is an irreducible matrix.
\end{proof}
The Example \ref{ex:nsor} can be used to clarify Theorem \ref{thm:nsor}.
\begin{example}\label{ex:nsor}
Consider AVEs~\eqref{eq:ave} with $A=M-N$ and $b = Ax^*-|x^*|$ where $x^* = [\frac{1}{2},0,\frac{1}{2},0]^\top$.
\begin{enumerate}[(i)]
  \item For GRMS iteration: $M = \begin{bmatrix}4 & -0.5 & 0.5 & 0 \\-0.5 & 4 & 0 & 0.5 \\0.5 & 0 & 4 & -0.5 \\0 & 0.5 & -0.5 & 4 \\\end{bmatrix}$, $N = \dfrac{1}{10}I$, $Q =0.99I$, $Q_1=0.98I$, $H = 0.01I$ and $\theta = 1$. Then $T \approx \begin{bmatrix}0.0333 & 0.3300 \\0.0446 & 0.3503 \end{bmatrix}$ is an irreducible matrix and $\rho(T)\approx 0.3914$;
  \item For NSOR iteration: $\omega_{\rm NSOR} = 1.1,~\sigma_{\rm NSOR} = 0.98$. Then $T_{\rm NSOR}\approx \begin{bmatrix}0.1000 & 0.3793 \\0.6622 &0.6605 \end{bmatrix}$ and $\rho(T_{\rm NSOR})\approx 0.9544$.
\end{enumerate}

We can find that the conditions of Theorem~\ref{thm:nsor} hold, and $\rho(T)<\rho(T_{\rm NSOR})<1$. Set initial vector $x^{(0)} = y^{(0)} = [0,0,0,0]^\top$. From Figure \ref{fig:nsor}, we know that both the GRMS method and the NSOR method are convergent and the former is perform well than the latter in the cases of IT.

\begin{figure}[htp]
  \centering
  \includegraphics[width=0.9\textwidth]{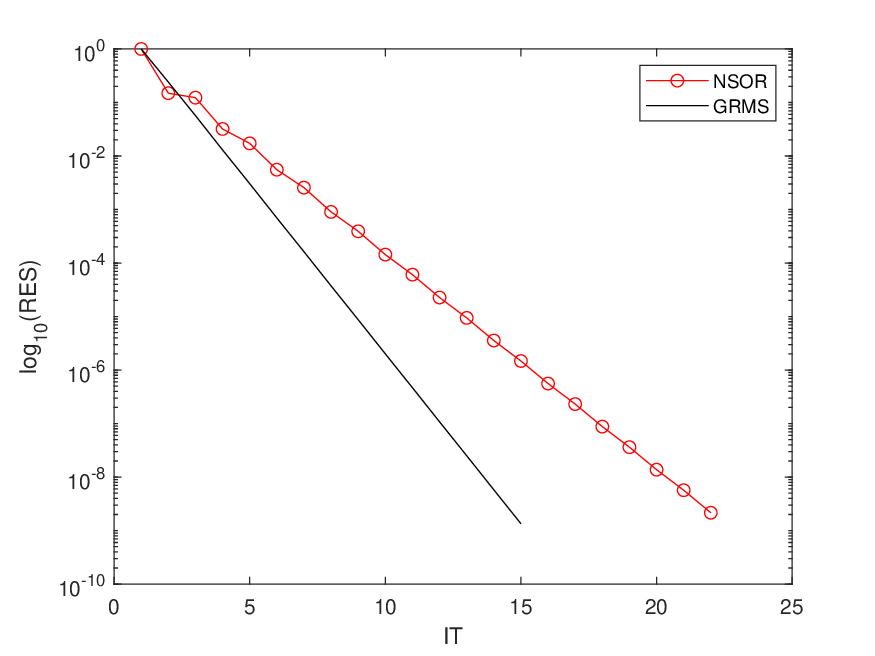}\\
  \caption{The curve of RES:=$\frac{\|Ax^{(k)} - |x^{(k)}| -b\|_2}{\|b\|_2}$ and IT (the number of iteration) between GRMS and NSOR iteration. The experiment is terminated if RES $\leq 10^{-9}$.}\label{fig:nsor}
\end{figure}

\end{example}

\section{ Numerical results}\label{sec:Numerical}
In this section, we will use two examples to numerically illustrate the performance of our method for solving GAVEs~\eqref{eq:gave}. All experiments are supported by a personal computer with $3.20$ GHZ central processing unit \big(Intel (R), Corel(TM), i$5$-$11320$H\big), $16$ GB memory and windows $11$ operating system, and implemented in MATLAB $9.10.0$ (R $2021$a). Nine iterative schemes will be tested.
\begin{enumerate}[]

  \item \textbf{GRMS}: The proposed iterative scheme \eqref{eq:grms} with $M= D-\dfrac{3}{4}L$, $Q = 10.5I$ and $Q_1 = 10 I$. In our experiments, we let $H = -\theta\zeta I$ with $\zeta \in \mathbb{R}$ being a constant.

  \item \textbf{Picard}: The Picard iterative scheme \eqref{eq:pi}.

  \item \textbf{SOR}: The SOR-like iterative scheme \eqref{it:sor4gave}.

  \item \textbf{MFPI}: The modified fixed point iterative scheme \eqref{eq:mfpi4gave} with $Q_{\rm MFPI}=Q$.

  \item \textbf{SSMN}: The shift-splitting Newton-type iterative scheme \eqref{eq:ssmm} with $\Omega_{\rm SSMN} = \text{diag}(A)$.

  \item \textbf{NSOR-like}: The Newton-based SOR iterative scheme \cite{zhwl2021}
  $$
  x^{k+1} = (D+\alpha\Omega_{\rm NSOR\text{-}like}-\alpha L)^{-1} \left[(\alpha\Omega_{\rm NSOR\text{-}like}+(1-\alpha)D+\alpha U) x^k + \alpha(B|x^k| + b)\right]
  $$
  with $\Omega_{\rm NSOR\text{-}like}=\text{diag}(A)$ and $\alpha >0$ being a constant.

  \item \textbf{MGSOR}: The modified generalized SOR-like iterative scheme \eqref{eq:mgsor4gave} with $Q_{\rm MGSOR}=Q$.

  \item \textbf{MAMS}: The momentum acceleration-based matrix splitting iterative scheme \eqref{eq:mams4gave} with $M_{\rm MAMS}=M$, $\Omega_{\rm MAMS}=\text{diag}(A)$ and the initial vectors $x^{(1)} =x^{(0)}$.

  \item \textbf{NSOR}: The new SOR-like iterative scheme \eqref{eq:nsor4gave}.
\end{enumerate}

All methods will be run ten times and the average IT (the number of iterations), the average CPU (the elapsed CPU time in seconds) and the average RES are reported, where
$$
{\rm RES} = \frac{\|Ax^{(k)} - B|x^{(k)}| -b\|}{\|b\|}.
$$
For Picard, SSMN, NSOR-like and MAMS, $x^{(0)}=(-\frac{1}{6},0,-\frac{1}{6},0,\dots,-\frac{1}{6},0)^\top$. For the other tested iterative schemes, $x^{(0)}=y^{(0)}=(-\frac{1}{6},0,-\frac{1}{6},0,\dots,-\frac{1}{6},0)^\top$. All methods are terminated when ${\rm RES} \le 10^{-8}$ or the maximal number of iterations $k_{\max} = 500$ is exceeded.

\begin{example}\label{exam1}{\rm
Consider GAVEs \eqref{eq:gave} with $A=\tilde{A}+\dfrac{1}{5}I$, where
\begin{equation*}
\scriptsize{
\setlength{\arraycolsep}{5pt}
\renewcommand{\arraystretch}{1}
\tilde{A}=
\left(
  \begin{array}{cccccccccccc}
    S_1 & -1.5I & -0.5I &-1.5I &-0.5I & 0 & 0 &0 & 0 & 0 & 0 & 0 \\
    -1.5I & S_1 & -1.5I &-0.5I &-1.5I & -0.5I & 0 &0 & 0 & 0 & 0 & 0\\
    -0.5I & -1.5I & S_1 &-1.5I &-0.5I & -1.5I & -0.5I &0 & 0 & 0 & 0 & 0\\
    -1.5I & -0.5I & -1.5I &S_1 &-1.5I & -0.5I & -1.5I &-0.5I & 0 & 0 & 0 & 0\\
    -0.5I & -1.5I & -0.5I &-1.5I &S_1 & -1.5I & -0.5I &-1.5I & -0.5I & 0 & 0 & 0\\
    0 & -0.5I & -1.5I &-0.5I &-1.5I & S_1 & -1.5I &-0.5I & -1.5I & -0.5I & 0 & 0\\
    \ddots&\ddots&\ddots&\ddots&\ddots&\ddots&\ddots
    &\ddots&\ddots&\ddots&\ddots&\ddots\\
    0 & 0&0&-0.5I & -1.5I &-0.5I &-1.5I & S_1 & -1.5I &-0.5I & -1.5I & -0.5I \\
    0 & 0&0&0&-0.5I & -1.5I &-0.5I &-1.5I & S_1 & -1.5I &-0.5I & -1.5I  \\
    0 & 0&0&0&0&-0.5I & -1.5I &-0.5I &-1.5I & S_1 & -1.5I &-0.5I   \\
    0 & 0&0&0&0&0&-0.5I & -1.5I &-0.5I &-1.5I & S_1 & -1.5I   \\
    0 & 0&0&0&0&0&0&-0.5I & -1.5I &-0.5I &-1.5I & S_1
  \end{array}
\right)
}
\in\mathbb{R}^{n\times n}
\end{equation*}
 and
$$B=
\left(
  \begin{array}{cccccccccccc}
    \tilde{S_2} & -I & -I &-I &-I & 0 & 0 &0 & 0 & 0 & 0 & 0 \\
    -I & \tilde{S_2} & -I &-I &-I & -I & 0 &0 & 0 & 0 & 0 & 0\\
    -I & -I & \tilde{S_2} &-I &-I & -I & -I &0 & 0 & 0 & 0 & 0\\
    -I & -I & -I &\tilde{S_2} &-I & -I & -I &-I & 0 & 0 & 0 & 0\\
    -I & -I & -I &-I &\tilde{S_2 }& -I & -I &-I & -I & 0 & 0 & 0\\
    0 & -I & -I &-I &-I & \tilde{S_2} & -I &-I & -I & -I & 0 & 0\\
    \ddots&\ddots&\ddots&\ddots&\ddots&\ddots&\ddots
    &\ddots&\ddots&\ddots&\ddots&\ddots\\
    0 & 0&0&-I & -I &-I &-I & \tilde{S_2} & -I &-I & -I & -I \\
    0 & 0&0&0&-I & -I &-I &-I & \tilde{S_2} & -I &-I & -I  \\
    0 & 0&0&0&0&-I & -I &-I &-I & \tilde{S_2} & -I &-I   \\
    0 & 0&0&0&0&0&-I & -I &-I &-I & \tilde{S_2} & -I   \\
    0 & 0&0&0&0&0&0&-I & -I &-I &-I & \tilde{S_2}
  \end{array}
\right)
\in\mathbb{R}^{n\times n},$$
 in which
$$
S_1=
\setlength{\arraycolsep}{0.9pt}
\left(
  \begin{array}{cccccccccc}
    36 & -1.5 & -0.5 &-1.5 & 0 &0 & 0 & 0 & 0 & 0 \\
    -1.5 & 36 & -1.5 &-0.5 &-1.5 &  0 &0 & 0 & 0 & 0 \\
    -0.5 & -1.5 & 36 &-1.5 &-0.5 & -1.5 & 0 & 0 & 0 & 0\\
    -1.5 & -0.5 & -1.5 &36 &-1.5 & -0.5 & -1.5 & 0 & 0 & 0\\
    0 & -1.5 & -0.5 &-1.5 &36 & -1.5 & -0.5 &-1.5 & 0 & 0\\
    0 & 0 & -1.5 &-0.5 &-1.5 & 36 & -1.5 &-0.5 & -1.5  & 0\\
    \ddots&\ddots&\ddots&\ddots&\ddots&\ddots&\ddots
    &\ddots&\ddots&\ddots\\
     0&0&0&0 & -1.5 &-0.5 &-1.5 & 36 & -1.5 &-0.5   \\
     0&0&0&0&0& -1.5 &-0.5 &-1.5 &36 & -1.5   \\
     0&0&0&0&0&0& -1.5 &-0.5 &-1.5 & 36
  \end{array}
\right)\in\mathbb{R}^{m\times m}
$$
and
$$\tilde{S_2}=
\setlength{\arraycolsep}{0.9pt}
\left(
  \begin{array}{cccccccccc}
    3 & -1 & -1 &-1 & 0 &0 & 0 & 0 & 0 & 0 \\
    -1 & 3 & -1 &-1 &-1 &  0 &0 & 0 & 0 & 0 \\
    -1 & -1 & 3 &-1 &-1 & -1 & 0 & 0 & 0 & 0\\
    -1 & -1 & -1 &3 &-1 & -1 & -1 & 0 & 0 & 0\\
    0 & -1 & -1 &-1 &3 & -1 & -1 &-1 & 0 & 0\\
    0 & 0 & -1 &-1 &-1 & 3 & -1 &-1 & -1  & 0\\
    \ddots&\ddots&\ddots&\ddots&\ddots&\ddots&\ddots
    &\ddots&\ddots&\ddots\\
     0&0&0&0 & -1 &-1 &-1 & 3 & -1 &-1   \\
     0&0&0&0&0& -1 &-1 &-1 &3 & -1   \\
     0&0&0&0&0&0& -1 &-1 &-1 & 3
  \end{array}
\right)
\in\mathbb{R}^{m\times m}.$$
In addition, $b=Ax^*-B|x^*|$ with $x^*=\left(\frac{1}{2},1,\frac{1}{2},1,\dots,\frac{1}{2},1\right)^\top$.

Numerical results for this example are reported in Table~\ref{table1}, in which $\theta_{opt}, \omega_{\rm opt}$, $\tau_{\rm opt}$, $\beta_{\rm opt}$, $\alpha_{\rm opt}$, $\sigma_{opt}$ and  $\zeta_{\rm opt}$ are numerically optimal iteration parameters of the tested methods, respectively\footnote{The same goes to Example~\ref{exam2}.}. For this example, we do not report the results of the NSOR iteration since it is not convergent whenever $m>2$. It follows from  Table~\ref{table1} that the GRMS iteration is superior to all of the other tested methods in terms of IT and CPU.}

\begin{table}[!h]\footnotesize \label{table1}
\centering
\caption{Numerical results for Example~\ref{exam1}.}\label{table1}
\begin{tabular}{ccccccc}\hline
			Method  & $m$ & $60$  & $80$ & $90$ & $100$ & $110$  \\ \hline
			\textbf{GRMS}   &  $(\theta_{\rm opt},\zeta_{\rm opt})$ & $(0.96,0.01)$ &  $(0.96,0.01)$  & $(0.96,0.01)$  & $(0.96,0.01)$ & $(0.96,0.01)$    \\
			&  IT  &  $\textbf{8}$ & $\textbf{8}$   & $\textbf{8}$    & $\textbf{8}$  &$\textbf{8}$   \\
			&   CPU & $\textbf{0.0028}$ &  $\textbf{0.0046}$ &  $\textbf{0.0067}$ & $\textbf{0.0123}$ &$\textbf{0.0157}$  \\
			&    RES &  2.7852e-09 & 2.1181e-09 & 1.8956e-09 & 1.7176e-09 & 1.5719e-09   \\
			Picard   &   &  &   &   &   &   \\
			&  IT  &  $27$ & $28$   & $28$    & $28$ & $28$   \\
			&   CPU & $0.2353$ &  $0.4274$ &  $0.6475$ & $0.9504$ &$1.2351$  \\
			&    RES &  9.4921e-09 & 5.8315e-09 & 6.1584e-09 & 6.4193e-09 & 6.6323e-09   \\
            SOR &   $\omega_{\rm opt}$ & $0.88$ & $0.88$  &  $0.88$ &  $0.90$ & $0.90$  \\
			&  IT  &  $12$ & $12$   & $12$    & $11$ & $11$   \\
			&   CPU & $0.1049$ &  $0.1935$ &  $0.2830$ & $0.3767$ &$0.4906$  \\
			&    RES &  8.4200e-09 & 7.2953e-09 & 6.8791e-09 & 9.6647e-09 & 9.3116e-09   \\
            MFPI   & $\tau_{\rm opt}$  & $0.77$ & $0.76$  &  $0.76$ &  $0.79$ & $0.79$  \\
			&  IT  &  $14$ & $14$   & $14$    & $13$ & $13$   \\
			&   CPU & $0.1256$ &  $0.2253$ &  $0.3401$ & $0.4489$ &$0.5989$  \\
			&    RES &  7.6818e-09 & 9.0837e-09 & 8.5772e-09 & 9.9758e-09 & 9.6026e-09   \\
            SSMN&  &  &  &  &  & \\
			&  IT  &  $9$ & $9$   & $9$  &  $9$ & $9$   \\
			&  CPU & $0.0814$ &  $0.1446$ &  $0.2180$ & $0.3124$ &$0.4075$   \\
			&  RES &  1.6836e-09  & 1.8415e-09  & 1.8937e-09  & 1.9353e-09  & 1.9693e-09  \\
            NSOR-like& $\alpha_{\rm opt}$ & $1.84$ & $1.99$  & $1.98$  & $1.97$  &$1.97$   \\
			&  IT  &  $18$ & $17$   & $17$    & $17$  &$17$   \\
			&  CPU & $0.0039$ &  $0.0059$ &  $0.0086$ & $0.0129$ &$0.0168$  \\
			&  RES &  9.8445e-09 & 9.9880e-09 & 9.9143e-09 & 9.9252e-09 & 9.4978e-09   \\
            MGSOR & $(\alpha_{\rm opt},\beta_{\rm opt})$ & $(0.93,0.87)$ & $(0.94,0.86)$ & $(0.94,0.86)$ & $(0.93,0.86)$ & $(0.95,0.85)$  \\
			&  IT  &  $12$ & $12$   & $12$    & $12$  &$12$   \\
			&  CPU & $0.1095$ &  $0.1916$ &  $0.2947$ & $0.4257$ &$0.5514$  \\
			&  RES &  9.5864e-09 & 9.5125e-09 & 9.0772e-09 & 9.5574e-09 & 9.9321e-09   \\
            MAMS & $\beta_{\rm opt}$ & $1.26$ & $1.20$  & $1.18$  & $2.00$  &$1.99$   \\
			&  IT  &  $27$ & $27$   & $27$    & $26$  &$26$   \\
			&  CPU & $0.0058$ &  $0.0098$ &  $0.0151$ & $0.0218$ &$0.0264$  \\
			&  RES &  9.9275e-09 & 9.9577e-09 & 9.9621e-09 & 9.9764e-09 & 9.9436e-09   \\\hline
\end{tabular}
\end{table}

\end{example}

\begin{example}\label{exam2}{\rm
Consider GAVEs \eqref{eq:gave} with the same $A$ in Example \ref{exam1} and
$$B=
\scriptsize{
\setlength{\arraycolsep}{5pt}
\renewcommand{\arraystretch}{1}
\left(
  \begin{array}{cccccccccccc}
    \bar{S_2} & -0.5I & -0.5I &-0.5I &-0.5I & 0 & 0 &0 & 0 & 0 & 0 & 0 \\
    -0.5I & \bar{S_2} & -0.5I &-0.5I &-0.5I & -0.5I & 0 &0 & 0 & 0 & 0 & 0\\
    -0.5I & -0.5I & \bar{S_2} &-0.5I &-0.5I & -0.5I & -0.5I &0 & 0 & 0 & 0 & 0\\
    -0.5I & -0.5I & -0.5I &\bar{S_2} &-0.5I & -0.5I & -0.5I &-0.5I & 0 & 0 & 0 & 0\\
    -0.5I & -0.5I & -0.5I &-0.5I &\bar{S_2} & -0.5I & -0.5I &-0.5I & -0.5I & 0 & 0 & 0\\
    0 & -0.5I & -0.5I &-0.5I &-0.5I & \bar{S_2} & -0.5I &-0.5I & -0.5I & -0.5I & 0 & 0\\
    \ddots&\ddots&\ddots&\ddots&\ddots&\ddots&\ddots
    &\ddots&\ddots&\ddots&\ddots&\ddots\\
    0 & 0&0&-0.5I & -0.5I &-0.5I &-0.5I & \bar{S_2} & -0.5I &-0.5I & -0.5I & -0.5I \\
    0 & 0&0&0&-0.5I & -0.5I &-0.5I &-0.5I & \bar{S_2} & -0.5I &-0.5I & -0.5I  \\
    0 & 0&0&0&0&-0.5I & -0.5I &-0.5I &-0.5I & \bar{S_2} & -0.5I &-0.5I   \\
    0 & 0&0&0&0&0&-0.5I & -0.5I &-0.5I &-0.5I & \bar{S_2} & -0.5I   \\
    0 & 0&0&0&0&0&0&-0.5I & -0.5I &-0.5I &-0.5I & \bar{S_2}
  \end{array}
\right)
}
\in\mathbb{R}^{n\times n},
$$
where
$$\bar{S_2}=
\scriptsize{
\setlength{\arraycolsep}{5pt}
\renewcommand{\arraystretch}{1}
\left(
  \begin{array}{cccccccccccc}
    16 & -0.5 & -0.5 &-0.5 &-0.5 & 0 & 0 &0 & 0 & 0 & 0 & 0 \\
    -0.5 & 16 & -0.5 &-0.5 &-0.5 & -0.5 & 0 &0 & 0 & 0 & 0 & 0\\
    -0.5 & -0.5 & 16 &-0.5 &-0.5 & -0.5 & -0.5 &0 & 0 & 0 & 0 & 0\\
    -0.5 & -0.5 & -0.5 &16 &-0.5 & -0.5 & --0.5 &-0.5 & 0 & 0 & 0 & 0\\
    -0.5 & -0.5 & -0.5 &-0.5 &16 & -0.5 & -0.5 &-0.5 & -0.5 & 0 & 0 & 0\\
    0 & -0.5 & -0.5 &-0.5 &-0.5 & 16 & -0.5 &-0.5 & -0.5 & -0.5 & 0 & 0\\
    \ddots&\ddots&\ddots&\ddots&\ddots&\ddots&\ddots
    &\ddots&\ddots&\ddots&\ddots&\ddots\\
    0 & 0&0&-0.5 & -0.5 &-0.5 &-0.5 & 16 & -0.5 &-0.5 & -0.5 & -0.5 \\
    0 & 0&0&0&-0.5 & -0.5 &-0.5 &-0.5 & 16 & -0.5 &-0.5 & -0.5  \\
    0 & 0&0&0&0&-0.5 & -0.5 &-0.5 &-0.5 & 16 & -0.5 &-0.5   \\
    0 & 0&0&0&0&0&-0.5 & -0.5 &-0.5 &-0.5 & 16 & -0.5   \\
    0 & 0&0&0&0&0&0&-0.5 & -0.5 &-0.5 &-0.5 & 16
  \end{array}
\right)
}
\in\mathbb{R}^{m\times m}.
$$
In addition, $b=Ax^*-B|x^*|$ with $x^*=\left(-\frac{1}{2},1,-\frac{1}{2},1,\dots, -\frac{1}{2},1\right)^\top$.

Numerical results for this example are reported in Table~\ref{table2}. It follows from  Table~\ref{table2} that the GRMS iteration is also superior to all of the other tested methods in terms of IT and CPU.}

\begin{table}[!h]\footnotesize \label{table3}
\centering
\caption{Numerical results for Example~\ref{exam2}.}\label{table2}
\begin{tabular}{ccccccc}\hline
			Method  & $m$ & $60$  & $80$ & $90$ & $100$ & $110$  \\ \hline
			\textbf{GRMS}   &  $(\theta_{\rm opt},\zeta_{\rm opt})$ & $(1.12,-0.21)$ &  $(1.13,-0.21)$  & $(1.13,-0.21)$  & $(1.13,-0.21)$ & $(1.13,-0.21)$    \\
			&  IT  &  $\textbf{18}$ & $\textbf{18}$   & $\textbf{18}$    & $\textbf{18}$  &$\textbf{18}$   \\
			&   CPU & $\textbf{0.0061}$ &  $\textbf{0.0131}$ &  $\textbf{0.0208}$ & $\textbf{0.0285}$ &$\textbf{0.0354}$  \\
&    RES &  9.9662e-09  & 9.5037e-09 & 9.6845e-09 & 9.8312e-09 & 9.9526e-09  \\
            Picard   &   &  &   &   &   &   \\
			&  IT  &  $20$ & $20$   & $19$    & $19$ & $19$   \\
			&   CPU & $0.1539$ &  $0.3426 $ &  $0.4437$ & $0.5650$ &$0.7159$  \\
			&    RES &  6.0393e-09  & 5.2093e-09 & 9.9357e-09 & 9.4355e-09 & 9.0142e-09  \\
			SOR   & $\omega_{\rm opt}$  & $0.99$ & $0.99$  &  $0.98$ &  $0.98$ & $0.98$  \\
			&  IT  &  $20$ & $20$   & $20$    & $20$ & $20$   \\
			&   CPU & $0.1552$ &  $0.3329$ &  $0.4719$ & $0.6076$ &$0.7588$  \\
			&    RES &  7.6767e-09 & 6.6559e-09 & 9.7287e-09 & 9.2968e-09 & 8.9345e-09   \\
            MFPI &   $\tau_{\rm opt}$ & $0.94$ & $0.97$  &  $0.97$ &  $0.96$ & $0.96$  \\
			&  IT  &  $21$ & $20$   & $20$    & $20$ & $20$   \\
			&   CPU & $0.1664$ &  $0.3375$ &  $0.4836$ & $0.6096$ &$0.7799$  \\
			&    RES &  8.8358e-09 & 8.9155e-09 & 8.4440e-09 & 9.6503e-09 & 9.2759e-09   \\
            SSMN&  &  &  &  &  & \\
			&  IT  &  $25$ & $25$   & $25$  &  $25$ & $25$   \\
			&  CPU & $0.1973$ &  $0.4262$ &  $0.6025$ & $ 0.7447 $ &$0.9651$   \\
			&  RES & 5.8424e-09   & 5.7611e-09  & 5.7318e-09  & 5.7075e-09  & 5.6871e-09  \\
            NSOR-like& $\alpha_{\rm opt}$ & $1.98$ & $1.98$ & $1.99$ & $1.99$ & $1.99$ \\
			&  IT  &  $45$ & $45$   & $45$  &  $45$ & $45$   \\
			&  CPU & $0.069$ &  $0.0151$ &  $0.0221$ & $0.0330$ &$0.0410$   \\
			&  RES &  9.9198e-09  & 9.9964e-09  & 9.6222e-09  & 9.6399e-09  & 9.6540e-09  \\
            MGSOR & $(\alpha_{\rm opt},\beta_{\rm opt})$ & $(1.08,~0.88)$ & $(1.02,~0.95)$ & $(1.03,~0.94)$ & $(1.04,~0.93)$ & $(1.04,~0.93)$\\
			&  IT  &  $21$ & $20$   & $20$  &  $20$ & $20$   \\
			&  CPU & $0.1694$ &  $0.3578$ &  $0.4886$ & $0.6100$ &$0.7828$  \\
			&  RES &  9.9598e-09 & 9.5198e-09 & 9.5627e-09 &  9.8469e-09 & 9.4615e-09   \\
            MAMS& $\beta_{\rm opt}$ & $1.81$ & $1.85$  & $1.86$  & $1.87$  & $1.88$   \\
			&  IT  &  $64$ & $64$   & $64$    & $64$  &$64$   \\
			&  CPU & $0.0110$ &  $0.0253$ &  $0.0354$ & $0.0506$ &$0.0604$  \\
			&  RES &  9.9610e-09 & 9.9671e-09 & 9.9809e-09 & 9.9832e-09 & 9.9774e-09   \\
            NSOR  & $(\omega_{\rm opt},\sigma_{\rm opt})$  & $(0.85,0.07)$ & $(0.85,0.07)$ & $(0.85,0.07)$ & $(0.85,0.07)$ & $(0.85,0.07)$  \\
			&  IT  &  $19$ & $19$   & $19$    & $19$  & $19$   \\
			&  CPU & $0.1490$ &  $0.3353$ &  $0.4562$ & $0.5853$ &$0.7371$  \\
			&  RES &  6.3830e-09 & 5.6182e-09 & 5.3492e-09 & 5.1280e-09 & 4.9425e-09   \\\hline
\end{tabular}
\end{table}

\end{example}

\section{Conclusions}\label{sec:Conclusions}
A generalization of the relaxed-based splitting (GRMS) iteration method for solving the generalized absolute value equations is developed. The GRMS method is a general framework which includes many existing methods as its special cases. The convergence of the proposed method is analysed and numerical results demonstrate the effectiveness of the proposed method. In addition, a few comparison theorems between GRMS and some existing methods are established. However, according to (iii) and (iv) in Section~\ref{sec:GRMS}, can we develop a method that contains the MSOR and the MAMS for solving GAVEs~\eqref{eq:gave} with $B$ being singular needs further study.


\section*{Declarations}

\begin{itemize}
\item Funding C. Chen was supported partially by the Fujian Alliance of Mathematics (Grant No. 2023SXLMQN03) and the Natural Science Foundation of Fujian Province (Grant No. 2021J01661). D. Han was supported partially by the National Natural Science Foundation of China (12131004, 11625105) and the Ministry of Science and Technology of China (2021YFA1003600).
\end{itemize}



\begin{thebibliography}{99}
\bibitem{alct2023}
J.-H. Alcantara, J.-S. Chen, M.-K. Tam. Method of alternating projections for the general absolute value equation, \emph{J. Fixed Point Theory Appl.}, 25: 39, 2023.

\bibitem{bepl1994}
A. Berman, R.J. Plemmons. Nonnegative Matrices in the Mathematical Sciences, \emph{SIAM Philadelphia}, 1994.

\bibitem{bk2004}
A. Bhaya, E. Kaszkurewicz. Steepest descent with momentum for quadratic functions is a version of the conjugate gradient method, \emph{Neural Networks}, 17: 65--71, 2004.

\bibitem{cyyh2021}
C.-R. Chen, Y.-N. Yang, D.-M. Yu, D.-R. Han. An inverse-free dynamical system for solving the absolute value equations, \emph{Appl. Numer. Math.}, 168: 170--181, 2021.

\bibitem{chyh2023}
C.-R. Chen, D.-M. Yu, D.-R. Han. Exact and inexact Douglas-Rachford splitting methods for solving large-scale sparse absolute value equations, \emph{IMA J. Numer. Anal.}, 43: 1036--1060, 2023.


\bibitem{doss2020}
X. Dong, X.-H. Shao, H.-L. Shen. A new SOR-like method for solving absolute value equations, \emph{Appl. Numer. Math.}, 156: 410--421, 2020.


\bibitem{huhu2010}
S.-L. Hu, Z.-H. Huang. A note on absolute value equations, \emph{Optim. Lett.}, 4: 417--424, 2010.

\bibitem{huli2022}
B.-H. Huang, W. Li. A modified SOR-like method for absolute value equations associated with second order cones, \emph{J. Comput. Appl. Math.}, 400: 113745, 2022.


\bibitem{jizh2013}
X.-Q. Jiang, Y. Zhang. A smoothing-type algorithm for absolute value equations,  \emph{J. Ind. Manag. Optim.}, 9: 789--798, 2013.


\bibitem{KM2017}
Y.-F. Ke, C.-F. Ma. SOR-like iteration method for solving absolute value equations,   \emph{Appl. Math. Comput.}, 311: 195--202, 2017.
	
\bibitem{ke2020}
Y.-F. Ke. The new iteration algorithm for absolute value equation, \emph{Appl. Math.  Lett.}, 99: 105990, 2020.



\bibitem{lyyhc2023}
X.-H. Li, D.-M. Yu, Y.-N. Yang, D.-R. Han, C.-R. Chen. A new fixed-time dynamical system for absolute value equations, \emph{Numer. Math. Theory Methods Appl.}, 16: 622--633, 2023.


\bibitem{LCX2016}	
C.-X. Li. A modified generalized Newton method for absolute value equations, \emph{J.  Optim. Theory Appl.}, 170: 1055--1059, 2016.
	
\bibitem{liwu2020}
C.-X. Li, S.-L. Wu. Modified SOR-like method for absolute value equations, \emph{Math. Probl. Eng.}, 2020: 1--6, 2020.

\bibitem{liyi2021}
    X. Li, X.-X. Yin. A new modified Newton-type iteration method for solving generalized absolute value equations, arXiv preprint, \href{arXiv.2103.09452}{arXiv.2103.09452}, 2021.

\bibitem{lild2022}
X. Li, Y.-X. Li, Y. Dou. Shift-splitting fixed point iteration method for solving generalized absolute value equations, \emph{Numer. Algorithms},
93: 695--710, 2023.

\bibitem{mame2006}
O.-L. Mangasarian, R.-R. Meyer. Absolute value equations, \emph{Linear Algebra Appl.}, 419:  359--367, 2006.

\bibitem{mang2007}
O.-L. Mangasarian. Absolute value programming, \emph{Comput. Optim. Appl.}, 36: 43--53, 2007.
	
\bibitem{manga2009a}
O.-L. Mangasarian. A generalized Newton method for absolute value equations,  \emph{Optim.  Lett.}, 3: 101--108, 2009.
	
\bibitem{manga2009b}
J. Rohn. On unique solvability of the absolute value equation, \emph{Optim. Lett.}, 3:  603--606, 2009.
	

\bibitem{prok2009}
O. Prokopyev. On equivalent reformulations for absolute value equations, \emph{Comput.  Optim. Appl.}, 44: 363--372, 2009.

\bibitem{prhc2006}
J.-R. Plemmons, R.-A. Horn, C.-R. Johnson. Matrix Analysis (Second edition),  \emph{Cambridge University Press}, 2013.	

\bibitem{qian1999}
  N. Qian. On the momentum term in gradient descent learning algorithms, \emph{Neural Networks}, 12: 145--151, 1999.

\bibitem{rume1986}
D.-E. Rumelhart, G.-E. Hinton, R.-J. Williams. Learning representations by back-propagating errors, \emph{Nature}, 323: 533--536, 1986.

\bibitem{rohn2004}
J. Rohn. A theorem of the alternatives for the equation $Ax + B|x| = b$, \emph{Linear  Multilinear Algebra}, 52: 421--426, 2004.	


\bibitem{rohf2014}
J. Rohn, V. Hooshyarbakhsh, R. Farhadsefat. An iterative method for solving absolute value equations and sufficient conditions for unique solvability, \emph{Optim. Lett.}, 8:  35--44, 2014.


\bibitem{soso2023}
J. Song, Y.-Z. Song. Relaxed-based matrix splitting methods for solving absolute value equations, \emph{Comput. Appl. Math.}, 42: 19, 2023.


\bibitem{wacc2019}
A. Wang, Y. Cao, J.-X. Chen. Modified Newton-type iteration methods for generalized absolute value equations, \emph{J. Optim. Theory Appl.}, 181: 216--230, 2019.


\bibitem{young1971}
D.-M. Young. Iterative Solution of Large Linear Systems, \emph{Academic Press}, New York, 1971.

	
\bibitem{yuch2022}	
D.-M. Yu, C.-R. Chen, D.-R. Han. A modified fixed point iteration method for solving the system of absolute value equations, \emph{Optimization}, 71: 449--461, 2022.


\bibitem{zhang2013}
N. Zhang. Semistability of steepest descent with momentum for quadratic functions, \emph{Neural Comput.}, 25: 1277--1301, 2013.

\bibitem{zhzl2023}
J.-L. Zhang, G.-F. Zhang, Z.-Z. Liang. A modified generalized SOR-like method for solving an absolute value equation, \emph{Linear Multilinear Algebra}, 71: 1578--1595, 2023.

\bibitem{zzll2023}
J.-L. Zhang, G.-F. Zhang, Z.-Z. Liang, L.-D. Liao. Momentum acceleration-based matrix splitting method for solving generalized absolute value equation, \emph{Comput. Appl. Math.}, 42: 300, 2023.

\bibitem{zhwl2021}	
H.-Y. Zhou, S.-L. Wu, C.-X. Li. Newton-based matrix splitting method for generalized absolute value equation, \emph{J. Comput. Appl. Math.}, 394: 113578, 2021.


\end{thebibliography}

\end{document}